\renewcommand*{\backref}[1]{}
\renewcommand*{\backrefalt}[4]{%
    \ifcase #1 (Not cited.)%
    \or        (Cited on page~#2.)%
    \else      (Cited on pages~#2.)%
    \fi}
\date{\today}
\newcommand{\Z}{{\mathbb Z}}
\newcommand{\R}{{\mathbb R}}
\newcommand{\C}{{\mathbb C}}
\newcommand{\T}{{\mathbb T}}
\newcommand{\N}{{\mathbb N}}
\newcommand{\SL}{{\mathrm{SL}}}
\newtheorem{theorem}{Theorem}[section]
\newtheorem{lemma}{Lemma}[section]
\newtheorem{prop}{Proposition}[section]
\newtheorem{coro}{Corollary}[section]
\newtheorem{prob}{Problem}[section]
\theoremstyle{definition}
\newtheorem*{definition}{Definition}
\newtheorem{remark}{Remark}[section]
\theoremstyle{plain}
\allowdisplaybreaks \numberwithin{equation}{section}
\begin{document}

\title[Must the Spectrum of a Random Operator Contain an Interval?]{Must the Spectrum of a Random Schr\"odinger Operator Contain an Interval?}

\author[D.\ Damanik]{David Damanik}

\address{Department of Mathematics, Rice University, Houston, TX~77005, USA}

\email{damanik@rice.edu}

\thanks{D.\ D.\ was supported in part by Simons Fellowship $\# 669836$ and NSF grants DMS--1700131 and DMS--2054752}

\author[A.\ Gorodetski]{Anton Gorodetski}

\address{Department of Mathematics, University of California, Irvine, CA~92697, USA\\
and  National Research University Higher School of Economics, Russian Federation}

\email{asgor@uci.edu}

\thanks{A.\ G.\ was supported in part by NSF grant DMS--1855541 and by Laboratory of Dynamical Systems and Applications NRU HSE, grant of the Ministry of science and higher education of the RF ag. N 075-15-2019-1931.}

\keywords{random Schr\"odinger operator, almost sure spectrum, ground states}

\begin{abstract}
We consider Schr\"odinger operators in $\ell^2(\Z)$ whose potentials are given by independent (not necessarily identically distributed) random variables. 
We ask whether it is true that almost surely its spectrum 
contains an interval. We provide an affirmative answer 
in the case of random potentials given by a sum of a perturbatively small quasi-periodic potential with analytic sampling function and Diophantine frequency vector and a term of Anderson type, given by independent identically distributed random variables. The proof proceeds by extending a result about the presence of ground states for atypical realizations of the classical Anderson model, which we prove here as well and which appears to be new.
\end{abstract}

\maketitle

\section{Introduction}

In this paper we are interested in the spectrum of random Schr\"odinger operators. Questions of this kind can be asked in any dimension, as well as for both the continuum and the discrete setting. As we explain below, the specific questions we will study turn out to be difficult even in the simplest setting, and hence we choose said setting to carry out our investigation.

Thus, we consider discrete one-dimensional Schr\"odinger operators
\begin{equation}\label{e.oper}
[H_\omega \psi](n) = \psi(n+1) + \psi(n-1) + V_\omega(n) \psi(n)
\end{equation}
in $\ell^2(\Z)$ with a random potential $V_\omega : \Z \to \R$. For simplicity we will focus on the case of bounded potentials throughout the paper. The essential features of the problems we study already arise in this setting.

Let us be more specific about the choice of the random potential. Randomness usually refers to the statement that the $\{ V_\omega(n) \}_{n \in \Z}$ are \emph{random variables}, defined on a probability space $(\Omega,\mu)$. Beyond that it is quite customary to require these random variables to be \emph{independent}. Finally, the standard \emph{Anderson model} arises once one assumes in addition that the random variables are \emph{identically distributed}.

In the case of the Anderson model, that is, when the $\{ V_\omega(n) \}_{n \in \Z}$ are independent and identically distributed, it is well known that the spectrum of $H_\omega$ is $\mu$-almost surely independent of the realization of the potential, that is, the spectrum is \emph{non-random} in this sense. Moreover, denoting the common distribution of the $\{ V_\omega(n) \}_{n \in \Z}$ by $\nu$ (so that $\mu = \nu^\Z$), the almost sure spectrum can be explicitly described as follows (see, e.g., \cite{D17, DF21, K08}):
\begin{equation}\label{e.asspec}
\sigma(H_\omega) = [-2,2] + \mathrm{supp} \, \nu =: \Sigma_\nu^\mathrm{AM} \quad \mu-\text{almost surely}.
\end{equation}
Here, $\mathrm{supp} \, \nu$ denotes the \emph{topological support} of $\nu$. In particular, each connected component of the almost sure spectrum is an interval of length at least $4$.

As an obvious immediate consequence of \eqref{e.asspec} we see that Cantor-type structures are impossible for the almost sure spectrum $\Sigma_\nu^\mathrm{AM}$ of the Anderson model. Given this well-known state of affairs, the reader may now observe that the title of this paper is intentionally provocative, and the immediate reaction of anyone familiar with the Anderson model is to say: ``Yeah, of course!''

Alas, we want to explain and emphasize in this paper that the answer is far from obvious and, in the setting we will consider, actually not known. To arrive at this setting we will drop the assumption of identical distribution. The potential in this case is still given by a collection of independent random variables and hence can and should be considered to be random. Potentials of this kind are naturally of interest, for example when modeling (perhaps small) random perturbations of a given deterministic potential. Indeed, models of this kind have been considered before, with the same motivation, but usually in the more tractable case of a periodic background potential, in which case one is able to model a crystal with random impurities; see, for example, \cite{AK19, CH94, DFG, GK13, KSS98, KSS98b, K99, ST20, V02, WW} for a partial list of papers studying this model. However, in this special case the answer to the question in the title of the paper is still ``yes'' for obvious reasons: the deterministic spectrum already contains intervals, which are typically only enlarged by the random perturbation.

The situation becomes much more difficult if the spectrum of the deterministic background model does not contain any intervals. In this case, the expert in random Schr\"odinger operators may still suspect a positive answer, but we challenge said expert to come up with a proof that works in complete generality.

Having set up the problem we want to study, let us now make it more concrete. First, observe that for an arbitrary deterministic background potential $V_\mathrm{bg}$ and an i.i.d.\ perturbation $\{ V^\mathrm{AM}_\omega \}_{\omega \in \Omega}$, there is no general reason for the spectrum of the operator $H_\omega$ with potential
\begin{equation}\label{e.potentialsum}
V_\omega = V_\mathrm{bg} + V^\mathrm{AM}_\omega
\end{equation}
to be the same for $\mu$-almost every $\omega \in \Omega$. Thus, we may want to restrict to a situation where such a non-random spectrum exists. Secondly, since we want to consider a case where the deterministic spectrum is a Cantor set (to make the question in the title of the paper as challenging as we want it to be), and Cantor spectra are most commonly observed for almost periodic potentials (the literature is vast; see, e.g., \cite{ABD, D17, E92, M81}), we may and should consider the case of an almost periodic $V_\mathrm{bg}$. A side benefit of doing so is that in this case, there indeed is a well defined non-random spectrum, as we will explain later.

Thus, we arrive at the following:
\begin{prob}\label{prob.1}
If $V_\mathrm{bg}$ is almost periodic and $\{ V^\mathrm{AM}_\omega(n) \}_{n \in \Z}$ are i.i.d.\ random variables with a common compactly supported single-site distribution $\nu$ that is non-degenerate (i.e., we have $\# \mathrm{supp} \, \nu \ge 2$), is it true that the almost sure spectrum of the operator $H_\omega$ given by \eqref{e.oper} and \eqref{e.potentialsum} must contain an interval?
\end{prob}

Of course the non-degeneracy assumption in Problem~\ref{prob.1} is necessary, as the answer is otherwise clearly negative (just take an almost periodic background potential for which Cantor spectrum has been verified and $\nu = \delta_0$).

Our purpose in this paper is twofold. On the one hand we wish to propose and advertise some problems centered around the structure of the spectrum of a random Schr\"odinger operator (when identical distribution fails) we believe are both open and interesting. Problem~\ref{prob.1} is the first such problem. However, it is possible that solving it completely is within reach, given current technology, and hence we will formulate more challenging problems below. On the other hand we wish to prove a result that answers Problem~\ref{prob.1} under additional assumptions. In doing so we also propose some ideas, and a road map, related to a possible eventual complete solution of  Problem \ref{prob.1}.

Let us first formulate the more challenging problems. We can ask the same question as in Problem~\ref{prob.1}, but without assuming the almost periodicity of $V_\mathrm{bg}$. One reason is that there are non-almost periodic background potentials known for which the spectrum is a Cantor set, such as Fibonacci \cite{S87, S89} and more general potentials: Sturmian potentials \cite{BIST89}, aperiodic potentials generated by primitive substitutions \cite{BG93, L02, LTWW02}, or elements of aperiodic Boshernitzan subshifts \cite{DL06a, DL06b}. Once an i.i.d.\ perturbation of such a background potential is considered, operators would arise that can for example model a one-dimensional quasicrystal with random impurities, which is certainly a model of interest. Another reason is that there may be other choices of background potentials for which the spectrum of the deterministic operator does not contain any intervals, but for which even ergodicity fails. In the latter case one should be careful about the expectation that there is a non-random spectrum after the addition of an i.i.d.\ perturbation. Nevertheless, note that the spectrum of $H_\omega$ contains an interval if and only if the \emph{essential} spectrum of $H_\omega$ contains an interval, and the latter set is almost surely non-random even in the absence of ergodicity due to Kolmogorov's zero-one law. We explain this observation in more detail in the appendix; see Theorem~\ref{t.essspect}.

\begin{prob}\label{prob.2}
If a bounded $V_\mathrm{bg} : \Z \to \R$ is given and $\{ V^\mathrm{AM}_\omega(n) \}_{n \in \Z}$ are i.i.d.\ random variables with a common compactly supported single-site distribution $\nu$ that is non-degenerate (i.e., we have $\# \mathrm{supp} \, \nu \ge 2$), is it true that with $\mu = \nu^\Z$, the set $\Sigma$, which for $\mu$-almost every $\omega$ is equal to the essential spectrum of the operator $H_\omega$ given by \eqref{e.oper} and \eqref{e.potentialsum}, must contain an interval?
\end{prob}

\begin{remark}\label{r.problem1.2}
As mentioned above, we limit our attention in this paper to the case of bounded potentials. On the one hand, this is done for the sake of simplicity. On the other hand, the way Problem~\ref{prob.2} is formulated, it would be possible to find counterexamples without the boundedness assumption on $V_\mathrm{bg}$  (cf.~Appendix~\ref{app.Unbounded}) and hence boundedness of $V_\mathrm{bg}$ is not only a convenient assumption, but it also cannot be dropped.
\end{remark}

\begin{remark}
Another open question related to Problems \ref{prob.1} and \ref{prob.2} that we would like to mention briefly is whether the spectrum of \eqref{e.oper} with potential given by \eqref{e.potentialsum} with a periodic $V_\mathrm{bg}$ must be a finite union of intervals. This question seems to have beautiful connections to a dynamical question on a description of the hyperbolicity locus of actions of finitely generated semigroups of $\mathrm{SL}(2, \mathbb{R})$ matrices discussed in \cite{ABY, Y}. We do not discuss this here, since it would lead us too far from the main subject of the paper, but would like to refer the interested reader to an upcoming publication \cite{WW} for details. We also mention that restrictions on the gap structure are imposed by the gap labeling theorem \cite{Bell}.
\end{remark}

At this point one may as well subsume the value $V_\mathrm{bg}(n)$ in the distribution of $V_\omega(n)$, namely it will be given by $V_\mathrm{bg}(n) + \nu$. In other words, one is dealing with an $n$-dependent distribution at site $n$, which in turn suggests how the model can be generalized: the distribution at site $n$ can be of a general nature and does not need to take the form $V_\mathrm{bg}(n) + \nu$. We arrive at

\begin{prob}\label{prob.3}
If the $\{ V_\omega(n) \}_{n \in \Z}$ are independent random variables with single-site distributions $\{ \nu_n \}_{n \in \Z}$ with supports that are uniformly bounded and variations that are uniformly bounded away from zero, is it true that with $\mu = \prod_{n \in \Z} \nu_n$, the set $\Sigma$, which for $\mu$-almost every $\omega$ is equal to the essential spectrum of the operator $H_\omega$ given by \eqref{e.oper}, must contain an interval?
\end{prob}

Summarizing this discussion, we believe that Problems~\ref{prob.1}--\ref{prob.3} are both open and interesting, and they are progressively harder to affirm (resp., easier to show to admit counterexamples).

\begin{remark}
As a final general remark pertaining to the non-stationary case, we point out that the issue studied here, namely the topological structure of the almost sure (essential) spectrum, is as of yet poorly understood, whereas a different, but equally interesting and important, issue is much better understood: it was shown in \cite{GK} that in the setting of Problem~\ref{prob.3}, the corresponding Schr\"odinger operators exhibit Anderson localization in both the spectral and the dynamical version, that is, the spectral type is almost surely pure point, the corresponding eigenfunctions decay exponentially, and initially localized wave packets do not travel off to infinity under the associated Schr\"odinger time evolution. Related earlier work can be found in \cite{KS80}, where a method was developed that can prove localization in the non-stationary case with absolutely continuous distributions under some additional assumptions. Moreover, in \cite{Kl3} the continuum ``crooked'' Anderson model (which can be considered an analog of the non-stationary random case) is investigated, and localization is proved under a H\"older continuity assumption on the distributions.
\end{remark}

\bigskip

Having formulated open problems that we hope will intrigue some of the readers and generate some research activity, let us now turn to our partial answer to Problem~\ref{prob.1}. We will exhibit a collection of almost periodic background potentials $V_\mathrm{bg}$ for which we will provide an affirmative answer to the problem. This collection is commonly referred to as ``perturbatively small quasi-periodic potentials with analytic sampling function and Diophantine frequency.'' For such potentials, it is well known that the resulting spectrum is generically a Cantor set. What we do here is add a non-degenerate i.i.d.\ perturbation and prove the existence of an interval in the almost sure spectrum of the resulting Schr\"odinger operator. We begin with the existence of the almost sure spectrum, which holds under quite weak assumptions.

\begin{theorem}\label{t.almostsurespectrum}
Suppose $T : X \to X$ is a minimal homeomorphism of a compact metric space $X$ and $f \in C(X,\R)$. Suppose further that $\nu$ is a compactly supported single-site distribution. Then there is a compact $\Sigma_\nu \subset \R$ such that for every $x \in X$ and $\mu = \nu^\Z$-almost every $\omega$, we have $\sigma(H_\omega) = \Sigma_\nu$, where $H_\omega$ is given by \eqref{e.oper} and \eqref{e.potentialsum} with $V_\mathrm{bg}(n) = f(T^n x)$.
\end{theorem}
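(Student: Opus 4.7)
The strategy is to define $\Sigma_\nu$ via an auxiliary product ergodic system and then to upgrade the ``almost every base point'' conclusion of Pastur's theorem to ``every base point,'' using minimality of $T$ together with the mixing properties of $\mu$. By Krylov--Bogolyubov, $T$ admits a $T$-invariant ergodic Borel probability $\rho$ on $X$, and by minimality $\supp \rho = X$. On the product $X \times \Omega$, set $\tilde T(x,\omega) := (Tx, \tau \omega)$, where $\tau$ denotes the left shift on $\Omega$. Since the shift on $(\Omega,\mu) = (\R^\Z, \nu^\Z)$ is a Bernoulli (hence weakly mixing) system, the product $(X \times \Omega, \tilde T, \rho \times \mu)$ is ergodic. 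Writing $H_{(x,\omega)}$ for the operator in \eqref{e.oper} with potential $f(T^n x) + V_\omega(n)$, the shift $S$ on $\ell^2(\Z)$ satisfies the covariance relation $S H_{(x,\omega)} S^{-1} = H_{\tilde T(x,\omega)}$, so the classical Pastur constancy argument produces a compact set $\Sigma_\nu \subset \R$ and a $(\rho \times \mu)$-full-measure set $G \subset X \times \supp\mu$ on which $\sigma(H_{(y,\omega')}) = \Sigma_\nu$.

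The core intermediate step is the following orbit-density claim: for every $x \in X$ and $\mu$-almost every $\omega$, the $\tilde T$-orbit of $(x,\omega)$ is dense in $X \times \supp \mu$. Granting this, both inclusions in $\sigma(H_{(x,\omega)}) = \Sigma_\nu$ follow from strong operator continuity of $(y,\omega') \mapsto H_{(y,\omega')}$ (which holds because $f$ is continuous, $T$ is a homeomorphism, and the product topology on $\Omega$ controls $V_\omega$ uniformly on any finite window) together with the semicontinuity $\sigma(H) \subset \liminf_k \sigma(H_k)$ under strong resolvent convergence. For $\Sigma_\nu \subset \sigma(H_{(x,\omega)})$, pick any $(y,\omega') \in G$ and $n_k$ with $\tilde T^{n_k}(x,\omega) \to (y,\omega')$; by covariance $\sigma(H_{\tilde T^{n_k}(x,\omega)}) = \sigma(H_{(x,\omega)})$ for every $k$, and semicontinuity gives $\Sigma_\nu = \sigma(H_{(y,\omega')}) \subset \sigma(H_{(x,\omega)})$. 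For the reverse inclusion, note that $G$ is dense in $\supp(\rho \times \mu) = X \times \supp \mu$; for $\mu$-a.e.\ $\omega$ one has $(x,\omega) \in X \times \supp \mu$, and choosing $(x_k,\omega_k) \in G$ with $(x_k,\omega_k) \to (x,\omega)$ yields $\sigma(H_{(x,\omega)}) \subset \liminf_k \sigma(H_{(x_k,\omega_k)}) = \Sigma_\nu$.

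To prove the orbit-density claim, fix $x \in X$. For any nonempty basic open $V \subset X$, minimality of $T$ combined with compactness of $X$ forces the return set $A_V := \{n \in \Z : T^n x \in V\}$ to be syndetic (the finite subcover argument applied to $\{T^{-n} V\}$). Given a basic cylinder $C \subset \supp\mu$ depending on coordinates in $[-R,R]$, extract from $A_V$ a subsequence $\{m_{k_j}\}$ with $m_{k_{j+1}} - m_{k_j} > 2R+1$; the events $\{\tau^{m_{k_j}} \omega \in C\}$ then depend on disjoint coordinate blocks of $\omega$, so they are i.i.d.\ with common probability $\mu(C) > 0$, and the second Borel--Cantelli lemma forces $A_V \cap \{n : \tau^n \omega \in C\} \ne \emptyset$ for $\mu$-almost every $\omega$. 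Intersecting over a countable base of product sets $V \times C$ in the second-countable space $X \times \supp \mu$ yields the required full-measure set of $\omega$.

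The main obstacle is precisely this simultaneous-density input. Minimality of $T$ alone produces syndetic return times in the $X$-coordinate from an arbitrary starting point, but one still has to exhibit a return time $n$ that simultaneously drives $\tau^n \omega$ into a prescribed cylinder; it is the product (Bernoulli) structure of $\mu$, transferred to the return times by the gap-thinning trick and the second Borel--Cantelli lemma, that supplies the required independence. Once the orbit density is in hand, the remainder of the proof is a formal coupling of Pastur's theorem, the covariance relation, and strong-resolvent semicontinuity of the spectrum.
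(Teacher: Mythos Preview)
Your proof is correct and follows essentially the same route as the paper: the core step in both is the orbit-density lemma (for every $x$ and $\mu$-a.e.\ $\omega$, the $\tilde T$-orbit of $(x,\omega)$ is dense in $X\times\supp\mu$), proved via minimality in the base together with independence and Borel--Cantelli in the fiber. The only cosmetic difference is that the paper concludes directly from the standard fact that two dynamically defined operators with dense orbits have the same spectrum, whereas you make this explicit by first invoking Pastur on the product ergodic system and then running the two semicontinuity inclusions; the content is the same.
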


\begin{remark}\label{t.supporttheorem}
(a) Minimality means that every $T$-orbit is dense, that is, we have
$$
X = \overline{ \{ T^n x : n \in \Z \} } \quad \text{ for every } x \in X.
$$

(b) Every almost periodic background potential can be written in this way. Indeed, $X$ can be chosen to be the hull of the given almost periodic potential, obtained by the $\ell^\infty$-closure of the set of its translates, $T$ is the shift, and $f$ is the evaluation at the origin.
\end{remark}

We have the following monotonicity result, which in the case of zero background is known as \emph{Kotani's support theorem}; compare \cite{DF21, K85}.

\begin{theorem}\label{t.supporttheorem}
Suppose $T : X \to X$ is a minimal homeomorphism of a compact metric space $X$ and $f \in C(X,\R)$. Using the notation from Theorem~\ref{t.almostsurespectrum}, we have
\begin{equation}\label{e.monotonicity}
\mathrm{supp} \, \nu_1 \subseteq \mathrm{supp} \, \nu_2 \quad \Rightarrow \quad \Sigma_{\nu_1} \subseteq \Sigma_{\nu_2}.
\end{equation}
\end{theorem}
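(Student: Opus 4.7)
The plan is to give a finite-window characterization of $\Sigma_\nu$ and then to transfer approximate eigenfunctions from the $\nu_1$-model to the $\nu_2$-model via a Borel--Cantelli argument driven by the support inclusion and the minimality of $T$. Throughout I fix an arbitrary $x \in X$; by Theorem~\ref{t.almostsurespectrum} the almost sure spectrum does not depend on this choice. Since $\Sigma_\nu$ is the $\nu^\Z$-almost sure spectrum of $H_\omega$ and compactly supported vectors are dense in $\ell^2(\Z)$, one has $E \in \Sigma_\nu$ if and only if for $\nu^\Z$-a.e.\ $\omega$ and every $\epsilon > 0$ there exist $m \in \Z$, $N \in \N$, and a normalized $\psi \in \ell^2(\Z)$ supported in $[m-N, m+N]$ with $\|(H_\omega - E)\psi\| < \epsilon$.

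Assume $E \in \Sigma_{\nu_1}$, fix $\epsilon > 0$, and set $\delta := \epsilon/4$. Using the characterization, select a realization $\omega^*$ from the $\nu_1^\Z$-full-measure set, an integer $m^*$, some $N \in \N$, and a normalized $\psi^*$ supported in $[m^*-N, m^*+N]$ with $\|(H^{(1)}_{\omega^*} - E)\psi^*\| < \epsilon/2$, where $H^{(i)}$ denotes the operator with random part drawn from $\nu_i$. The values $b_k := V^{\mathrm{AM},(1)}_{\omega^*}(k + m^*)$ for $k \in [-N,N]$ lie in $\mathrm{supp}\,\nu_1 \subseteq \mathrm{supp}\,\nu_2$; they form the template we aim to reproduce on the $\nu_2$ side.

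By uniform continuity of $f$ and minimality of $T$, the return set
\[
S := \bigl\{ m' \in \Z : |f(T^{k+m'}x) - f(T^{k+m^*}x)| < \delta \text{ for all } k \in [-N,N] \bigr\}
\]
is syndetic, so one extracts a sequence $m'_1 < m'_2 < \cdots$ in $S$ with $m'_{j+1} - m'_j \ge 2N+2$, making the windows $[m'_j - N, m'_j + N]$ pairwise disjoint. Because each $b_k \in \mathrm{supp}\,\nu_2$, the constant $p := \prod_{k=-N}^{N} \nu_2(B_\delta(b_k))$ is strictly positive and equals the $\nu_2^\Z$-probability of
\[
F_j := \bigl\{ \omega : |V^{\mathrm{AM},(2)}_\omega(k + m'_j) - b_k| < \delta \text{ for all } k \in [-N,N] \bigr\}.
\]
The $F_j$ depend on disjoint coordinates and are therefore independent, so Borel--Cantelli forces $\nu_2^\Z$-a.e.\ $\omega$ to lie in $\limsup_j F_j$. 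On this full-measure event, pick any such $j$ and set $\widetilde{\psi}(n) := \psi^*(n + m^* - m'_j)$. The discrete Laplacian commutes with translations, and on the support of $\widetilde{\psi}$ the potential $V_\omega^{(2)}$ differs from the $(m'_j - m^*)$-translate of $V_{\omega^*}^{(1)}$ by at most $2\delta$ (a bound of $\delta$ from the background via $m'_j \in S$ and $\delta$ from the random template via $F_j$), giving
\[
\|(H_\omega^{(2)} - E)\widetilde{\psi}\| \le \|(H^{(1)}_{\omega^*} - E)\psi^*\| + 2\delta < \epsilon.
\]
Intersecting these $\nu_2^\Z$-a.s.\ events over a countable sequence $\epsilon_k \downarrow 0$ yields $E \in \sigma(H^{(2)}_\omega)$ for $\nu_2^\Z$-a.e.\ $\omega$, whence $E \in \Sigma_{\nu_2}$.

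The principal technical point is the \emph{simultaneous} approximation: the background condition is deterministic (controlled by minimality of $T$), while the random-template condition is probabilistic (controlled by the support inclusion). Minimality supplies syndetically many candidate positions, the uniform lower bound $p > 0$ per candidate survives on the $\nu_2$ side thanks to $\mathrm{supp}\,\nu_1 \subseteq \mathrm{supp}\,\nu_2$, and these two ingredients are reconciled through Borel--Cantelli precisely because the candidate windows can be made disjoint, and hence the events $F_j$ independent.
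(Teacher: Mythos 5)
Your proof is correct, but it takes a genuinely different route from the paper's. The paper reduces Theorem~\ref{t.supporttheorem} to the more general Theorem~\ref{t.supportthm}, which is stated for \emph{arbitrary} bounded background potentials and compares almost sure \emph{essential} spectra. The proof of Theorem~\ref{t.supportthm} runs through Proposition~\ref{p.crit}: that characterization of $\sigma_\mathrm{ess}$ intrinsically furnishes infinitely many pairwise disjoint windows $[m_j-N,m_j+N]$ at which the transfer matrices behave well, so the authors simply keep the background fixed on those windows, perturb only the random part within $\mathrm{supp}\,\nu_1$ by at most $\varepsilon_N$, and invoke the second Borel--Cantelli lemma (the per-window probabilities are bounded below uniformly in $j$ by compactness of $\mathrm{supp}\,\nu_1$). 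You instead argue directly at the level of the spectrum via Weyl's criterion with compactly supported test vectors: you extract \emph{one} good window and \emph{one} fixed template $(b_k)$ from a $\nu_1$-typical realization, and then \emph{transport} that window to infinitely many disjoint locations by exploiting minimality of $T$ (the return set $S$ is syndetic), after which Borel--Cantelli on the fixed-probability independent events $F_j$ does the rest. The two approaches pay for their windows differently: the paper's route does not need minimality at all (hence Theorem~\ref{t.supportthm} covers an arbitrary bounded $V_\mathrm{bg}$ and is formulated for $\sigma_\mathrm{ess}$), but has to handle a $j$-dependent template via a compactness argument; yours uses a single template and a cleaner constant $p$, but is inherently tied to the minimal setting because you must reproduce the background. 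It is worth noting that your transport-plus-Borel--Cantelli mechanism is essentially the same as the one the paper uses in Lemma~\ref{l.minmix} to prove Theorem~\ref{t.almostsurespectrum}, so your argument fits naturally alongside the paper's proof of existence of $\Sigma_\nu$, whereas the paper's own proof of the support theorem is designed to go through the non-ergodic Theorem~\ref{t.essspect}/Theorem~\ref{t.supportthm} machinery.
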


\begin{remark}\label{r.redtobernoulli}
Theorem~\ref{t.supporttheorem} permits us to restrict our attention to the case where the i.i.d.\ portion of the random potential is given by the Bernoulli-Anderson model. That is, if one is able to identify intervals in $\Sigma_{\nu_1}$ for some $\nu_1$ with $\# \mathrm{supp} \, \nu_1 = 2$, then any $\nu_2$ whose topological support $\mathrm{supp} \, \nu_2$ contains the two elements of $\mathrm{supp} \, \nu_1$ will be such that $\Sigma_{\nu_2}$ contains an interval as well. Put differently, given an arbitrary $\nu_2$, we choose two suitable elements of $\mathrm{supp} \, \nu_2$, assign non-zero probabilities to each, and study the resulting $\nu_1$.
\end{remark}

\begin{theorem}\label{t.main1}
Given $d \in \N$, $f \in C^\omega(\T^d,\R)$, and $\alpha \in \T^d$ Diophantine, there exists $c_1 > 0$ such that for $0 \le c < c_1$ the following folds. There is $\lambda_0 > 0$ such that for every compactly supported and non-degenerate single-site distribution $\nu$ with
\begin{equation}\label{e.ssdassumption}
\inf \{ |E-\tilde E| : E, \tilde E \in \mathrm{supp} \, \nu, \; E \not= \tilde E \} < \lambda_0,
\end{equation}
 the almost sure spectrum $\Sigma_\nu$ associated with $H_\omega$ given by \eqref{e.oper} and \eqref{e.potentialsum} with $V_\mathrm{bg}(n) = c f(\theta + n\alpha)$ contains an interval.
\end{theorem}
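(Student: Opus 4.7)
By Theorem~\ref{t.supporttheorem} and Remark~\ref{r.redtobernoulli} it suffices to treat the case where $\nu$ is a two-point Bernoulli distribution supported on $\{a,b\} \subset \R$ with $0 < b - a < \lambda_0$, the precise $\lambda_0$ to be chosen during the argument. Write $H_0$ for the background operator $[H_0\psi](n) = \psi(n+1) + \psi(n-1) + c f(\theta + n\alpha)\psi(n)$ and set $\tilde E_0 \defeq \min \sigma(H_0)$; by minimality of the translation on $\T^d$ this infimum is independent of $\theta$. I will show the inclusion $[\tilde E_0 + a,\, \tilde E_0 + b] \subseteq \Sigma_\nu$, which settles the theorem.

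The plan is, for each $E_0 \in [a,b]$, to exhibit a deterministic configuration $V_\star \in \{a,b\}^\Z$ for which the operator $H_\star \defeq H_0 + V_\star$ possesses an $\ell^2$ ground state at some energy $E(E_0)$ arbitrarily close to $\tilde E_0 + E_0$. Since any such $V_\star$ lies in $\supp \mu = \{a,b\}^\Z$, a support/Weyl-sequence argument of the type used in Theorems~\ref{t.almostsurespectrum} and \ref{t.supporttheorem} forces $E(E_0) \in \Sigma_\nu$; continuous dependence of $E(E_0)$ on $E_0$ then sweeps out the interval. The configuration is built as follows: outside a large window $[-L,L]$ take $V_\star \equiv b$, so that the essential spectrum of $H_\star$ is $\sigma(H_0) + b$ with infimum $\tilde E_0 + b$, strictly above the target $\tilde E_0 + E_0$ whenever $E_0 < b$, and any discrete eigenvalue below $\tilde E_0 + b$ is automatically a ground state. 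Inside the window take $V_\star$ periodic of period $q$ (a denominator from a good rational approximation of $\alpha$) with empirical mean exactly $E_0$, so that $V_\star - E_0$ is $q$-periodic and mean-zero on $[-L, L]$. A generalized eigenfunction of $H_0$ at the edge $\tilde E_0$, produced by Eliasson reducibility in the perturbative analytic Diophantine regime $c < c_1$, can then be truncated by a slowly varying cutoff supported in $[-L,L]$ and phase-corrected against the $q$-periodic Bernoulli pattern to produce a Dirichlet trial vector with Rayleigh quotient $\tilde E_0 + E_0 + o_{L\to\infty}(1)$. For $L$ large this forces a genuine eigenvalue of $H_\star$ at the predicted location, supplying $E(E_0)$.

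The main obstacle is the last step: the jumps of $V_\star - E_0$ on each $q$-block have size $b-a$, not pointwise small, and must be absorbed through block-averaging against the slowly varying quasi-periodic structure without destroying the reducibility of $H_0$ near $\tilde E_0$. The hypothesis $b-a < \lambda_0$ is exactly the lever here: once $\lambda_0$ is chosen below the KAM small-divisor threshold associated with the regime $c < c_1$, the block-averaged perturbation becomes analytic of size $O(b-a)$, and the spectral edge of $H_0$ shifted by $E_0$ can be tracked uniformly in $E_0 \in [a,b]$. This uniform control is precisely what the classical ($c=0$) ``ground states for atypical realizations'' result, proved separately in the paper, provides at the Laplacian edge $-2$; extending it to the perturbative quasi-periodic edge $\tilde E_0$ is the technical heart of the proof.
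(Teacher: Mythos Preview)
Your reduction to the Bernoulli case via Theorem~\ref{t.supporttheorem} is correct and matches the paper. After that, however, the variational/Rayleigh-quotient scheme you outline is a different route from the paper's, and as written it does not close.

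There are three concrete gaps. First, a Rayleigh-quotient trial vector only bounds $\min\sigma(H_\star)$ from above; it does not place a discrete eigenvalue \emph{at} $\tilde E_0+E_0$. You would need a matching lower bound, and none is given. Second, the ``continuous dependence of $E(E_0)$ on $E_0$'' is unjustified: the configuration space $\{a,b\}^\Z$ is totally disconnected, the window length $L$, the period $q$, and the Bernoulli pattern are all discrete choices, and a $q$-periodic $\{a,b\}$-word has empirical mean in the finite set $\{a+k(b-a)/q:0\le k\le q\}$, so ``mean exactly $E_0$'' already fails for generic $E_0$. There is no intermediate-value mechanism available here. Third --- and you acknowledge this yourself --- the correction term $\sum_n (V_\star(n)-E_0)\,|\chi u|^2(n)$ carries non-constant weights $|u|^2$ against jumps of size $b-a$; ``block-averaging'' and ``phase-correction'' are named but not carried out, and it is far from clear that this term is $o(1)$ when $u$ is the (genuinely oscillating) quasi-periodic Bloch-type solution at the edge.

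The paper avoids all of this by working not variationally but through the projective dynamics of transfer matrices. Reducibility at $E^*_c=\max\sigma(H_0)$ (Proposition~\ref{p.conjtoconstant}) conjugates the cocycle to the constant parabolic $B=\begin{pmatrix}2&-1\\1&0\end{pmatrix}$, so that in cotangent coordinates the two Bernoulli choices act as small perturbations of $x\mapsto 2-\tfrac{1}{x}$ and $x\mapsto 2+\lambda-\tfrac{1}{x}$. The second map has an attracting/repelling pair of fixed points $\sim 1\pm\sqrt{\lambda}$, and Lemmas~\ref{l.inversecylinder}--\ref{l.transitcyl} show that for every energy in a one-sided neighbourhood of $E^*_c+\lambda$ one can \emph{choose} the sequence of maps so that the projective orbit stays in a region where the fibre derivative is uniformly $<1$ going forward and $>1$ going backward. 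That directly manufactures an exponentially decaying positive eigenfunction at that exact energy, for some realization; Johnson's theorem then puts the whole interval into $\Sigma_\nu$. No continuity-in-$E_0$ of the realization is needed: each energy gets its own realization independently.
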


\begin{remark}\label{rem.mainthm}
(a) Here, $\alpha \in \T^d$ is called \emph{Diophantine} if there exist constants $\tau, K > 0$ such that for every $m \in \Z^d \setminus \{ 0 \}$, we have
$$
\mathrm{dist} \, (m \cdot \alpha , \Z) \ge \frac{K}{|m|^\tau}.
$$
The relevance of our assumptions on $f$, $\alpha$, and $c_1$ is that in the proof of Theorem~\ref{t.main1} we want to apply the reducibility result from \cite{HA09}, which was established under these assumptions. Indeed we only apply that reducibility statement at the top of the spectrum, where the rotation number vanishes; see specifically Proposition~\ref{p.conjtoconstant} below. This is the only place where these assumptions enter in our discussion, and hence any similar reducibility result that is applicable at the top of the quasi-periodic spectrum and holds under weaker assumptions will be sufficient for our proof.

(b) A single-site distribution $\nu$ is considered to be \emph{non-degenerate} if $\# \mathrm{supp} \, \nu \ge 2$.

(c) The condition \eqref{e.ssdassumption} holds as soon as $\mathrm{supp} \, \nu$ is not discrete, for example when $\nu$ is not pure point. If, on the other hand, $\mathrm{supp} \, \nu$ is discrete, then \eqref{e.ssdassumption} requires at least two of its elements to be sufficiently close.

(d) The statement about the existence of the almost sure spectrum $\Sigma_\nu$ holds in greater generality and will be proved under the appropriate set of assumptions in Appendix~\ref{sec.almostsurespectrum}. On the other hand, our proof of the statement that $\Sigma_\nu$ has non-empty interior does currently require the setting described in Theorem~\ref{t.main1}. It would certainly be of interest to prove this conclusion under weaker assumptions.

(e) For more recent work studying a combination of quasi-periodicity and randomness, see \cite{BMOT, BePo, CDK}.

(f) The key idea in the proof of Theorem~\ref{t.main1} is that for a fixed phase $\theta$ and every energy in a small ``left''-neighborhood of the top of the almost sure spectrum, one may generate realizations of the random potential so that the resulting operator admits an exponentially decaying eigenvector corresponding to the energy in question. As a result, each of these energies must be an eigenvalue of that operator, and consequently belong to the almost sure spectrum.
\end{remark}

The proof of Theorem~\ref{t.main1} will require some preparatory work that actually concerns the classical Anderson model. While it would strictly speaking not be necessary to first consider the case of zero background potential, we have decided to proceed in two steps for the following reasons:

\begin{enumerate}

\item The results we prove for the classical Anderson model are interesting in their own right and hence deserve to be formulated and proved in this setting.

\item The extension from zero background to quasi-periodic background is then easier to process, as it will become clear that what is happening in the proof of Theorem~\ref{t.main1} is really the analog of a well-motivated and more classical problem associated with the Anderson model.

\end{enumerate}

We will employ the reduction to the Bernoulli case described in Remark~\ref{r.redtobernoulli}. Thus, we consider the classical Anderson model in this special case as well. Without loss of generality (i.e. up to a shift in energy, which of course does not affect the topological structure of the spectrum because it results in a mere translation), we will consider the case where the two values are given by $0$ and $\lambda > 0$. By \eqref{e.asspec}, the almost sure spectrum is then given by
\begin{equation}\label{e.asspectbernoulli}
\Sigma(H_\omega) = [-2,2] \cup [-2 + \lambda, 2 + \lambda],
\end{equation}
and in particular the top of the spectrum is equal to $2 + \lambda$. The top of the spectrum will play a crucial role in this paper. We will refer to it as the \emph{ground state energy} and investigate the question of whether there is a corresponding \emph{ground state}. In classical settings (i.e., for typical atomic models; compare, e.g., \cite[Chapter~10]{T14}) the ground state is always a strictly positive bona fide eigenfunction, but square-summability does not always have to hold. As we discuss in  Appendix \ref{sec.basicsofGS}, a more widely applicable notion involves Gilbert-Pearson's subordinacy property. We will prove the following result for the classical Bernoulli-Anderson model, which shows that there aren't even any decaying positive solutions at the top of the almost sure spectrum for any realization of the random potential, and which will be our first step towards developing the techniques for the proof of Theorem~\ref{t.main1}.

\begin{theorem}\label{t.eigenvectors}
Suppose that $\mathrm{supp} \, \nu = \{ 0, \lambda \}$ for some $\lambda > 0$. Given an arbitrary $\omega \in \Omega = (\mathrm{supp} \, \nu)^\Z$ and a solution $u$ of the difference equation
\begin{equation}\label{e.eveAM}
u(n+1) + u(n-1) + V^\mathrm{AM}_\omega (n) u(n) = E u(n), \quad n \in \Z
\end{equation}
at energy $E = 2 + \lambda$ that has strictly positive entries, we must have
\begin{equation}\label{e.nondecayprop}
\liminf_{n\to \infty} u(n) > 0 \quad  \text{or} \quad \liminf_{n \to -\infty} u(n) > 0.
\end{equation}
\end{theorem}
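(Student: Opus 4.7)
The plan is to reduce the problem to the dynamics of the scalar ratio
\[
\theta(n) \defeq \frac{u(n+1)}{u(n)}, \qquad n \in \Z,
\]
which is well-defined and strictly positive because $u$ is. Substituting $E = 2+\lambda$ into \eqref{e.eveAM} and dividing by $u(n)$ produces the first-order recursion $\theta(n) = f_{V_\omega(n)}(\theta(n-1))$ with
\[
f_V(r) \defeq (2+\lambda - V) - \frac{1}{r}, \qquad V \in \{0,\lambda\}, \; r > 0.
\]
Observe immediately that $\theta(n) \ge 1$ is equivalent to $u(n+1) \ge u(n)$, so \eqref{e.nondecayprop} is a statement about the projective dynamics generated by $\{f_0,f_\lambda\}$.

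The key observation I would establish next is that, precisely because $E$ is the maximum of the almost sure spectrum, the half-line $[1,\infty)$ is forward-invariant under both $f_0$ and $f_\lambda$, while the complementary half-line $(0,1]$ is invariant under both inverse maps $f_V^{-1}(s) = 1/((2+\lambda-V)-s)$. For the forward direction, $r \ge 1$ gives $f_V(r) \ge (2+\lambda - V) - 1 \ge 1$ because $V \le \lambda$; for the backward direction, $s \in (0,1]$ gives $(2+\lambda-V) - s \ge 1$, so $f_V^{-1}(s) \in (0,1]$ and in particular is well-defined and positive. Both inequalities are sharp exactly at $E = 2+\lambda$ when $V = \lambda$, which explains why the theorem targets the top of the spectrum.

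Given these two invariances, the conclusion follows at once by splitting on the value of $\theta(0)$. If $\theta(0) \ge 1$, forward invariance yields $\theta(n) \ge 1$ for all $n \ge 0$, hence $u(n) \ge u(0) > 0$ for all such $n$ and $\liminf_{n\to\infty} u(n) > 0$. If instead $\theta(0) \le 1$, iterating the backward recursion $\theta(n-1) = f_{V_\omega(n)}^{-1}(\theta(n))$ keeps $\theta(n) \in (0,1]$ for every $n \le 0$, i.e.\ $u(n+1) \le u(n)$ for such $n$, so $u$ is non-decreasing as $n$ decreases and $\liminf_{n\to -\infty} u(n) \ge u(0) > 0$. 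Since every positive $\theta(0)$ satisfies at least one of $\theta(0) \ge 1$ or $\theta(0) \le 1$, the dichotomy \eqref{e.nondecayprop} holds.

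The main conceptual step is spotting the invariant splitting of the positive ray at the parabolic fixed point $r = 1$ of $f_\lambda$; once this is in place the argument is bookkeeping and requires no control of the specific realization $\omega$. The only technical point is that the denominator $(2+\lambda-V)-s$ must not vanish on $(0,1]$ for $V \in \{0,\lambda\}$, which is automatic from the bound $(2+\lambda-V)-s \ge 1 > 0$, so the backward dynamics is unambiguously defined everywhere it is needed.
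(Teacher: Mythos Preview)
Your argument is correct and in fact more elementary than the paper's. Both proofs pass to the ratio $x_n = u(n+1)/u(n)$ and study the projective maps $r \mapsto (2+\lambda-V) - 1/r$, but the paper then invokes the identity $|f'(x)| = 1/\|Av\|^2$ to translate non-decay of $u$ into a bound on the derivative of the forward (resp.\ backward) composition, and establishes that bound via a case analysis using the finer trapping regions $I_\lambda = [1,x_{\mathrm{att}}]$ and $\hat I_\lambda = [x_{\mathrm{rep}},1]$ determined by the fixed points of $F_\lambda$. You bypass the derivative--norm correspondence entirely by noting that $\theta(n) \ge 1$ is literally the statement $u(n+1) \ge u(n)$, and you use the cruder (but sufficient) trapping regions $[1,\infty)$ and $(0,1]$, which are forward- and backward-invariant for both maps at exactly $E = 2+\lambda$.

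What each approach buys: yours is shorter and needs nothing beyond the single parabolic fixed point $r=1$ of $f_\lambda$. The paper's derivative framework and finer trapping intervals are not needed for Theorem~\ref{t.eigenvectors} itself, but they are the template that is later perturbed in Section~\ref{sec.presenceofGS} and Section~\ref{sec.intervalsinSigma}, where one works strictly inside $(1,x_{\mathrm{att}})$ and $(x_{\mathrm{rep}},1)$ and needs the uniform contraction/expansion coming from the derivative estimates. So the paper is deliberately setting up machinery for the sequel, while your proof is optimized for this statement alone.
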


\begin{remark}
(a) As was mentioned above, the energy $E = 2 + \lambda$ is the top of the spectrum of $H^\mathrm{AM}_\omega$ for $\mu = \nu^\Z$-almost every $\omega \in \Omega$, and Theorem~\ref{t.eigenvectors} shows that for none of these $\omega$'s, this energy is actually an eigenvalue. In other words, for typical realizations of the Bernoulli-Anderson model, there is no square-summable ground state.

(b) However, the conclusion from part (a) has a much more direct proof, which was pointed out to us by Svetlana Jitomirskaya: Suppose to the contrary that there exists an  $\omega \in (\mathrm{supp} \, \nu)^\Z$ so that $2 + \lambda$ is an eigenvalue of $H_\omega$. Denote by $\psi \in \ell^2(\Z)$ a corresponding normalized eigenvector, that is, $H_\omega \psi = (2 + \lambda) \psi$ and $\| \psi \| = 1$. Denote by $\omega_\mathrm{max}$ the sequence in $(\mathrm{supp} \, \nu)^\Z$ for which all entries are equal to $\lambda$. Then,
$$
2 + \lambda = \langle \psi, H_\omega \psi \rangle \le \langle \psi, H_{\omega_\mathrm{max}} \psi \rangle < 2 + \lambda.
$$
Here the first inequality holds since $H_\omega \le H_{\omega_\mathrm{max}}$ (which is obvious) and the second holds since $H_{\omega_\mathrm{max}}$ has no eigenvalues and obeys $H_{\omega_\mathrm{max}} \le 2 + \lambda$ (both of which are well known). This contradiction shows that our assumption was wrong and $H_\omega$ does not have $2 + \lambda$ as an eigenvalue.

(c) Given the discussion in part (b), the main value of Theorem~\ref{t.eigenvectors} lies in establishing the stronger non-decay property and in introducing some ideas and methods that eventually will lead to the proof of our main result, Theorem~\ref{t.main1}.

(d) Soshnikov \cite{Sosh} and Bjerkl\"ov  \cite{Bje1, Bje2} have studied certain strongly coupled quasi-periodic models for which they showed that, for a suitably chosen phase, the top of the spectrum is an eigenvalue admitting an exponentially decaying eigenvector. Some of these models are known to display Anderson localization. We therefore see that the two most popular localized Schr\"odinger operators in one space dimension behave differently in this particular regard.
\end{remark}

As a complementary result, let us then show that for sufficiently small values of $\lambda$, there are some atypical realizations of the random potential (i.e., they do not belong to the full-measure set for which the spectrum coincides with $\Sigma^\mathrm{AM}_\nu$) for which the difference equation corresponding to the energy at the top of the spectrum actually does admit a square-summable ground state. Indeed this happens for ground state energies from a non-degenerate interval:

\begin{theorem}\label{t.groundstates}
For any sufficiently small $\lambda > 0$, there exists an interval $(2+\lambda-\varepsilon, 2+\lambda)$, $\varepsilon>0$, such that for any $E\in (2+\lambda-\varepsilon, 2+\lambda)$, there exists a bi-infinite sequence of $0$'s and $\lambda$'s such that $E$ is the maximal energy in the spectrum of the Schr\"odinger operator with potential given by that sequence, $E$ is an eigenvalue of this operator, and the corresponding eigenfunction is exponentially decaying and has strictly positive components.
\end{theorem}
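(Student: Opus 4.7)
The plan is to construct, for each $E \in (2+\lambda-\varepsilon,2+\lambda)$, a bi-infinite Bernoulli sequence $\omega \in \{0,\lambda\}^{\Z}$ such that the Schr\"odinger operator $H_\omega$ has a strictly positive, exponentially decaying eigenvector with eigenvalue $E$ and $E = \max\sigma(H_\omega)$. Setting $\delta := 2+\lambda-E > 0$, the transfer matrix $A_0(E) = \begin{pmatrix} E & -1 \\ 1 & 0 \end{pmatrix}$ is hyperbolic (since $E>2$ when $\delta<\lambda$), with eigenvalues $\mu_{\pm}(E) = \tfrac{1}{2}(E \pm \sqrt{E^2-4})$ satisfying $\mu_+ > 1 > \mu_- = 1/\mu_+ > 0$, while $A_\lambda(E) = \begin{pmatrix} E-\lambda & -1 \\ 1 & 0 \end{pmatrix}$ is elliptic (since $|E-\lambda|<2$), acting on $\R P^1$ as a rotation by the small angle $\phi(E) = \arccos((E-\lambda)/2)$. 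In the slope coordinate $s_n := \psi(n-1)/\psi(n)$, the equation $H_\omega\psi=E\psi$ becomes the iterated function system $s_{n+1} = 1/((E-V(n))-s_n)$ with two branches $\Phi_0,\Phi_\lambda$; the fixed point $\mu_+$ of $\Phi_0$ is repelling but corresponds precisely to the exponentially decaying direction, since at $s=\mu_+$ one has $\psi(n)/\psi(n-1)=\mu_-<1$.

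I would then design $\omega|_{n\ge 0}$ so that the forward orbit $(s_n)_{n\ge 0}$ stays confined to a narrow projective neighborhood of $\mu_+(E)$: the drift of $\Phi_0$ (pushing slopes toward the attracting fixed point $\mu_-$) is corrected by applications of $\Phi_\lambda$, which rotates slopes by $\phi(E)$, and the balance of these two effects prescribes a specific density of $\lambda$-symbols depending on $E$, realized, e.g., by a Sturmian-type coding. A matching construction on the negative half-line provides a backward-decaying branch; gluing the two at $n=0$ imposes one scalar matching condition (the slopes from the two sides must agree), which is met by tuning a continuous parameter inside the construction. The resulting $\psi \in \ell^2(\Z)$ solves $H_\omega\psi = E\psi$, and its strict positivity follows from a Perron-Frobenius-type principle applied to $H_\omega+cI$ (for $c$ large): this operator has non-negative matrix entries, is irreducible on the connected graph $\Z$, and the eigenvector corresponding to its top eigenvalue is therefore strictly positive. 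To certify that $E = \max\sigma(H_\omega)$, I would combine the operator bound $H_\omega \le H_\lambda$ (which yields $\sigma(H_\omega)\subseteq(-\infty,2+\lambda]$) with a hull-based description of the essential spectrum: if $\omega$ is drawn from a subshift whose sequences have uniformly bounded runs of $\lambda$'s, then every operator in the hull has spectral top strictly below $E$, so $\sigma_\mathrm{ess}(H_\omega)$ lies below $E$ and the constructed eigenvalue is in fact the top of the spectrum.

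The principal obstacle, I expect, is realizing the construction for \emph{every} $E$ in a non-degenerate interval rather than merely a countable dense subset. Because $\{0,\lambda\}^{\Z}$ is totally disconnected, no naive continuity argument in $\omega$ can produce a continuum of eigenvalues; the continuous tunability must enter through an auxiliary real parameter --- most naturally the initial slope $s_0 \in \R P^1$, or equivalently the Sturmian frequency in the coding --- with the symbolic sequence $(V(n))$ determined deterministically from that parameter. An intermediate-value or implicit-function argument in this continuous parameter should then single out the precise configuration realizing the target eigenvalue $E$.
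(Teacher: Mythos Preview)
Your projective-dynamics framework is correct, but you have overcomplicated the construction and in doing so created obstacles that the paper's approach avoids entirely. The paper works in the coordinate $x_n = u(n+1)/u(n)$ and observes that both projective maps $F_{\lambda-a}(x)=2+\lambda-a-1/x$ and $F_{-a}(x)=2-a-1/x$ have derivative $1/x^2$; hence any orbit confined to an interval $[1+\delta,x_{att}]$ produces exponential backward decay of $u$, and any orbit confined to $[x_{rep},1-\delta]$ produces exponential forward decay. The decisive observation is a simple \emph{covering property}: for small $\lambda$ and much smaller $a$, the images $F_{-a}(I)\cup F_{\lambda-a}(I)$ cover $I=[1+\delta,x_{att}]$, so for \emph{every} point of $I$ at least one inverse map sends it back into $I$; an analogous statement holds for the forward interval $\hat I$. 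One therefore picks an arbitrary $x\in I$, greedily chooses at each backward step whichever inverse keeps the orbit in $I$, applies a finite string of $F_{-a}$'s to transit from $I$ into $\hat I$, and then greedily chooses forward maps to remain in $\hat I$. No Sturmian coding, no gluing, no intermediate-value argument: the backward and forward halves emanate from the \emph{same} initial point, so they match automatically, and the construction is manifestly valid for every $E=2+\lambda-a$ in the interval.

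Your ``principal obstacle'' is thus illusory, and the gluing/matching step you worry about is a genuine weak point of your route that the paper's greedy construction sidesteps. Two further simplifications you are missing: positivity of $\psi$ is automatic because the orbit stays in the open first quadrant throughout (no Perron--Frobenius needed), and $E=\max\sigma(H_\omega)$ follows immediately from the classical fact that the existence of a strictly positive solution at energy $E$ forces $\sigma(H)\subseteq(-\infty,E]$ (see the appendix discussion around Theorem~\ref{t.positivegroundstate}); your hull argument with bounded $\lambda$-runs is both unnecessary and not obviously correct, since the greedy construction gives no a priori control on run lengths.
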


\begin{remark}
While the realizations that are discussed in Theorem \ref{t.groundstates} are atypical (i.e. of zero $\nu^{\Z}$-measure), they form a set that is not too small. In particular, it has positive Hausdorff dimension for each $E$ close to $2+\lambda$, see Proposition \ref{p.Hausdorff} below.
\end{remark}


A consequence of this result, which at first sight appears to have no importance in this context, is that the interval $(2+\lambda-\varepsilon, 2+\lambda)$ must belong to the almost sure spectrum $\Sigma^\mathrm{AM}_\nu$. Of course we knew that already from \eqref{e.asspectbernoulli}, but the main point is that this particular way of seeing that the interval in question belongs to the almost sure spectrum will extend to the case of quasi-periodic background and hence provide us with the key mechanism we will use to establish Theorem~\ref{t.main1}; compare Remark~\ref{rem.mainthm}.(f).

\section*{The Organization of This Paper} 

In Section~\ref{sec.absenceofGS} we establish the absence of decaying generalized ground states in Bernoulli Anderson models for typical realizations as stated in Theorem~\ref{t.eigenvectors}. 

Then we show the presence of ground states in Bernoulli Anderson models for atypical realizations in Section~\ref{sec.presenceofGS}. In particular, we prove Theorem~\ref{t.groundstates}, but we also show that the set of these atypical realization, while of measure zero, is not small in the sense that it has positive Hausdorff dimension (see Proposition~\ref{p.Hausdorff}). 

In Section~\ref{sec.intervalsinSigma} we prove our main result, Theorem~\ref{t.main1}, by first establishing an analog of Theorem~\ref{t.groundstates} for suitable quasi-periodic background potentials (see Theorem~\ref{t.groundstatesforquasiplus}), and then deriving Theorem~\ref{t.main1} from it. 

We conclude the paper with several appendices that contain material that elucidates the material in the main body of the paper further, as well as tools needed there which are likely known but for which we could not find a reference that presents them in the exact form needed. 

In Appendix~\ref{app.Unbounded} we exhibit an example of an  unbounded background potential such that no random perturbation with compactly single-site distributions can produce any intervals in the spectrum. 

We discuss the existence of the almost sure spectrum (resp., essential spectrum) in Appendix~\ref{sec.almostsurespectrum}. In particular, we prove Theorems~\ref{t.almostsurespectrum} and \ref{t.supporttheorem}, and we also prove the existence of an almost sure essential spectrum under very weak assumptions (see Theorem~\ref{t.essspect}). 

Finally, we provide a discussion of generalized ground states in Appendix~\ref{sec.basicsofGS}. Here we consider solutions to the associated difference equation at energy given by the top of the spectrum (the \emph{ground state energy}) that are smallest among all solutions in a suitable sense. We show that the notion of subordinacy due to Gilbert and Pearson provides for a suitable smallness concept that, when combined with a positivity property, allows us to prove the existence of \emph{generalized ground states} in this sense in a very general setting. Specializing then to the small quasi-periodic background case of interest to us, we explain how reducibility results established via KAM methods can single out the generalized ground state as the unique bounded solution at the top energy.

\section*{Acknowledgments}

We would like to thank Svetlana Jitomirskaya and Milivoje Lukic for helpful conversations, and Jean Bellissard for providing several useful references. Also, we would like to thank Victor Kleptsyn for many discussions that partly motivated this paper, and for communicating  Theorem \ref{t.essspect}  to us.

\section{Absence of Ground States in Bernoulli Anderson Models for Typical Realizations}\label{sec.absenceofGS}

In this section we prove Theorem~\ref{t.eigenvectors}. Thus, we consider the Bernoulli Anderson model and show that at the top of the almost sure spectrum, the difference equation \eqref{e.eveAM} does not admit any decaying positive solutions. 

\begin{proof}[Proof of Theorem~\ref{t.eigenvectors}]
Define
$$
B=\begin{pmatrix}
    2 & -1 \\
    1 & 0 \\
  \end{pmatrix}
$$
and
$$
  \Pi_a=\begin{pmatrix}
    2+a & -1 \\
    1 & 0 \\
  \end{pmatrix}=\begin{pmatrix}
    1 & a \\
    0 & 1 \\
  \end{pmatrix}B.
$$
The unit vector $\begin{pmatrix}
                 \cos \psi \\
                 \sin\psi \\
               \end{pmatrix}$ is parallel to the vector $\begin{pmatrix}
                 \cot\psi \\
                  1\\
               \end{pmatrix}$. An application of the matrix $\Pi_a$ to the vector $\begin{pmatrix}
                 z \\
                  1\\
               \end{pmatrix}$ gives a vector parallel to $\begin{pmatrix}
                 2+a-\frac{1}{z} \\
                  1\\
               \end{pmatrix}$, so let us consider the map $F_a:\mathbb{R}^+\to \mathbb{R}$ given by
$$
F_a(z)=2+a-\frac{1}{z},
$$
 Consider the transfer matrices
$$
  \Pi_0=\begin{pmatrix}
    E & -1 \\
    1 & 0 \\
  \end{pmatrix}\  \text{and}\   \Pi_{\lambda}=\begin{pmatrix}
    E-\lambda & -1 \\
    1 & 0 \\
  \end{pmatrix}.
$$
Set the energy to be $E = 2+\lambda$. The corresponding projective maps act on the points in the first quadrant (with the vectors in the first quadrant parameterized by the cotangent of the argument) by
$$
F_\lambda : x \mapsto 2 + \lambda - \frac{1}{x} \ \ \text{and}\ \ \ F_0 : x \mapsto 2 - \frac{1}{x}.
$$

Let  $\omega \in \{0, \lambda\}^{\mathbb{Z}}$ be given and suppose that $u$ is a solution of \eqref{e.eveAM} (where $V_\omega^\mathrm{AM}(n)=\omega_n\in \{0, \lambda\}$) with $E = 2 + \lambda$ that has strictly positive entries. Set $x_n=\frac{u(n+1)}{u(n)}$. Notice that the projective map generated by $\Pi_{\omega_k}|_{E=2+\lambda}$ corresponds to $F_0$ if $\omega_k = \lambda$ and to $F_\lambda$ if $\omega_k=0$. Also, if $A \in \mathrm{SL}(2, \mathbb{R})$, $v$ is a unit vector that corresponds to the point $x \in \T$ on the unit circle, and $f : \T \to \T$ is the projective map that corresponds to the linear map $A$, then it is known that $|f'(x)|=\frac{1}{\|Av\|^2}$ (indeed, due to the existence of the polar decomposition it is enough to check this for a diagonal matrix $A$, in which case the calculation becomes explicit). Since the parameterizations by the cotangent of the argument in a compact arc inside of the first quadrant only adds a bounded multiplicative factor to the derivative of a map, in view of the discussion in Appendix~\ref{sec.basicsofGS} it is enough to show that if $\{x_n\}_{n\in \mathbb{Z}}$ is a sequence of positive real numbers such that for some sequence $\{\eta_n\}\in \{0, \lambda \}^\mathbb{Z}$ (specifically, $\eta_n=\lambda-\omega_n$), we have $F_{\eta_n}(x_n)=x_{n+1}$, then
\begin{equation}\label{e.nondecaygoal1}
\limsup_{n\to \infty}(F_{\eta_n}\circ \ldots \circ F_{\eta_1}\circ F_{\eta_0})'(x_0)<\infty
\end{equation}
or
\begin{equation}\label{e.nondecaygoal2}
\limsup_{n\to \infty}(F_{\eta_{-n}}^{-1}\circ \ldots \circ F_{\eta_{-1}}^{-1})'(x_0)<\infty.
\end{equation}
Notice that for any $x>0$, $(F_\lambda)'(x)=F_0'(x)=\frac{1}{x^2}$. The map $F_0$ has one fixed point, namely $F_0(1)=1$, and that fixed point is semi-stable, that is, $F_0'(1)=1$, and for any $x \in (0,1)$, we have $F_0^{-k}(x)\to 1$ as $k\to +\infty$, and for any $x>1$ we have $F^k_0(x)\to 1$ as $k\to +\infty$.

The map $F_\lambda$ has two fixed points, $1+\frac{\lambda}{2}\pm \sqrt{\lambda+\frac{\lambda^2}{4}}$. One of them, $x_{att}=1+\frac{\lambda}{2}+ \sqrt{\lambda+\frac{\lambda^2}{4}}$, is attracting, and another one, $x_{rep}=1+\frac{\lambda}{2}- \sqrt{\lambda+\frac{\lambda^2}{4}}$, is repelling.

\begin{figure}[htp]
\centering
 \includegraphics[width=0.7\textwidth]{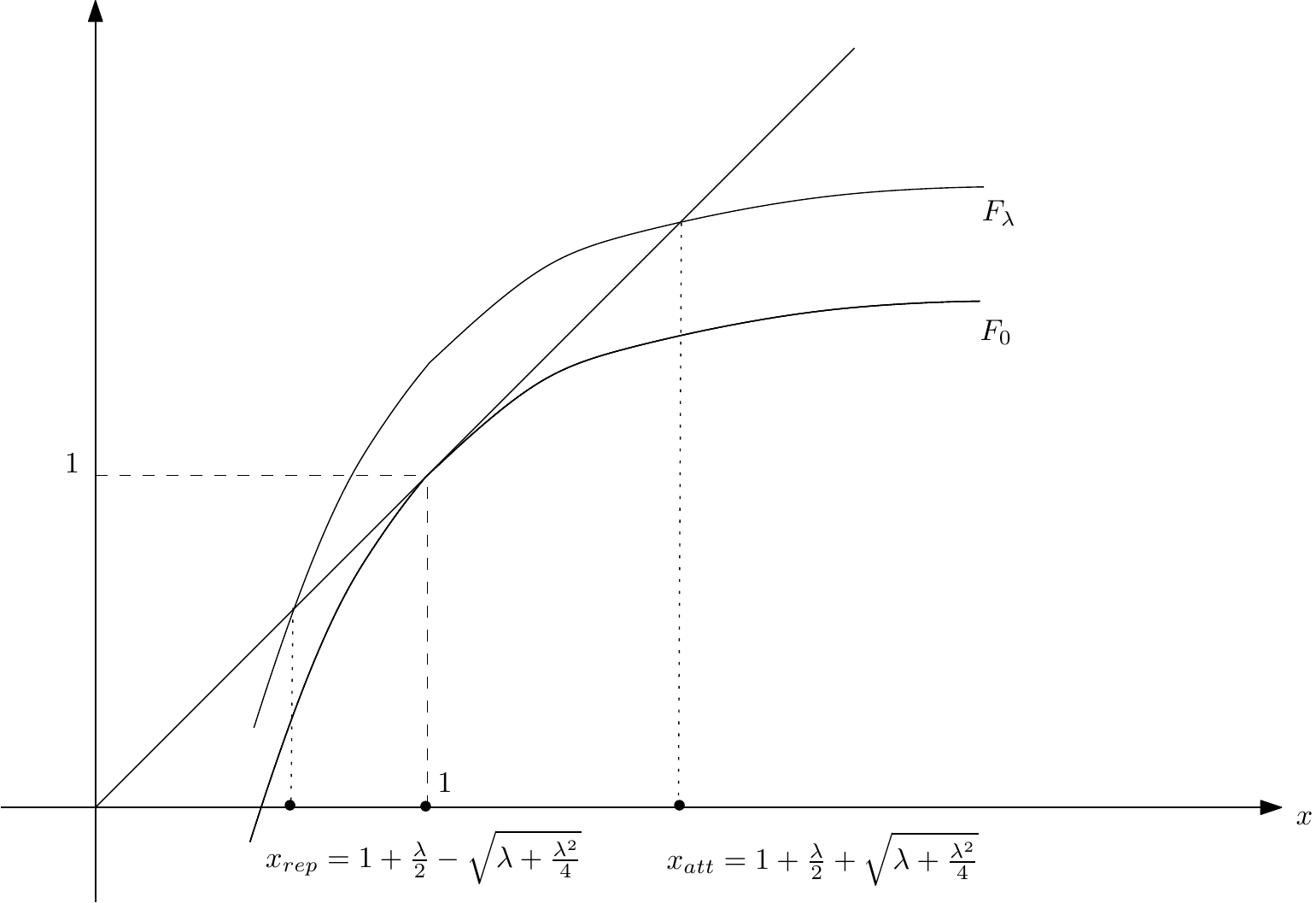}
 \caption{The maps $F_0$ and $F_\lambda$.}
\end{figure}

Let us consider now the possible cases of dynamical behavior of the iterates of $x_0$ under the sequence of maps $\{F_{\eta_n}\}_{n\in \mathbb{Z}}$, assuming $x_i>0$ for all $i\in \mathbb{Z}$.

If for some $n \in \mathbb{Z}$, we have $x_n\in (0, x_{rep})$, then $F_{\eta_n}(x_n)=x_{n+1}<x_n$, and eventually we would have $x_{n+k}<0$ for some $k>0$, which contradicts our assumption that $x_i>0$ for all $i\in \mathbb{Z}$.

Denote $I_\lambda=[1, x_{att}]$. Notice that $I_\lambda$ is a ``trapping interval'' for the maps $F_0$ and $F_\lambda$, that is, $F_0(I_\lambda) \subset I_\lambda$ and $F_\lambda(I_\lambda) \subset I_\lambda$. Since $\max_{x \in I_\lambda}(|F_0'(x)|, |F_\lambda'(x)|) \le 1$ if $x_n \in I_\lambda$ for some $n \in \mathbb{Z}$, then $x_{n+k} \in I_\lambda$ for all $k>0$, and
$$
\limsup_{k\to \infty}(F_{\eta_{n+k}}\circ \ldots \circ F_{\eta_n})'(x_n)<\infty,
$$
which implies that \eqref{e.nondecaygoal1} holds in this case.
Similarly, the interval $\hat I_{\lambda}=[x_{rep}, 1]$ is a ``trapping region'' for the maps $F_0^{-1}$ and $F_\lambda^{-1}$, and if $x_n\in \hat I_\lambda$ for some $n \in \mathbb{Z}$, then \eqref{e.nondecaygoal2} holds in this case.
Finally, if $x_n>x_{att}$, then either $x_{n+k}<x_{att}$ for some $k>0$, and one of the cases above can be applied, or $x_{n+k}\ge x_{att}$ for all $k>0$, and since $|F_0'(x)|, |F_\lambda'(x)|<1$ if $x\ge x_{att}$, we have \eqref{e.nondecaygoal1}.
This completes the proof of the theorem.
%
\end{proof}

\section{Presence of Ground States in Bernoulli Anderson Models for Atypical Realizations}\label{sec.presenceofGS}

Knowing that the maximum of the almost sure spectrum of the Bernoulli Anderson model is never an eigenvalue (for any realization), and hence no ground state exists in the classical sense whenever this particular energy, $2 + \lambda$, happens to be the top of the spectrum of a specific realization of the random potential (which is true almost surely), one may wonder whether an energy strictly smaller than $2 + \lambda$ can be the top of the spectrum and admit a ground state in the classical sense, that is, whether there is a square-summable positive solution at that energy. Theorem~\ref{t.groundstates} asserts that this scenario indeed occurs for some realizations if $\lambda > 0$ is small enough and the energy in question is sufficiently close to $2 + \lambda$. We begin by proving this theorem and explore later in this section how large in the sense of Hausdorff dimension the set of realizations is for which this can be verified.

\begin{proof}[Proof of Theorem \ref{t.groundstates}]
Suppose that $\lambda >0$ is sufficiently small, in particular such that the following inequalities hold:
\begin{equation}\label{e.condlambda}
(1-\lambda)\sqrt{\lambda+\frac{\lambda^2}{4}}-\frac{\lambda}{2} -\frac{\lambda^2}{2}> 0\ \ \text{and}\ \ \ \sqrt{\lambda+\frac{\lambda^2}{4}} > \frac{3\lambda}{2}.
\end{equation}
Suppose that $E=2+\lambda-a$, where $a\in (0, \lambda)$ is small, $a\ll \lambda$. Consider the transfer matrices for this energy $E$:
$$
  \Pi_0=\begin{pmatrix}
    E & -1 \\
    1 & 0 \\
  \end{pmatrix}=\begin{pmatrix}
    2+\lambda-a & -1 \\
    1 & 0 \\
  \end{pmatrix}
$$
and
$$
\Pi_{\lambda}=\begin{pmatrix}
    E-\lambda & -1 \\
    1 & 0 \\
  \end{pmatrix}=\begin{pmatrix}
    2-a & -1 \\
    1 & 0 \\
  \end{pmatrix}.
$$
The corresponding projective maps act on the points in the first quadrant (parameterized by the cotangent of the argument) by
$$
F_{\lambda-a}:x\mapsto 2+\lambda-a-\frac{1}{x}\ \ \text{and}\ \ F_{-a}:x\mapsto 2-a-\frac{1}{x}.
$$
The map $F_{\lambda-a}$ has two fixed points, the repelling $x_{rep} = x_{rep}(\lambda-a)$ and the attracting $x_{att}=x_{att}(\lambda-a)$. The map $F_{-a}$ does not have any fixed points, and for any $x>0$, we have $F_{-a}(x)<x$. Notice that for any $x>0$, we have $(F_{\lambda-a})'(x) = (F_{-a})'(x)=\frac{1}{x^2}$. For a small positive $\delta > 0$, denote by $I_{\lambda, a, \delta}$ the interval $I_{\lambda, a, \delta}=[1+\delta, x_{att}(\lambda-a)]$. 

\begin{lemma}\label{l.inversesequence}
Given a small $\lambda>0$, for all sufficiently small $a>0$ and $\delta>0$ and for any $x\in I_{\lambda, a, \delta}$, we have $F_{-a}^{-1}(x)\in I_{\lambda, a, \delta}$ or $F_{\lambda-a}^{-1}(x)\in I_{\lambda, a, \delta}$ (it is possible that both statements hold).
\end{lemma}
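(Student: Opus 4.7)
The key observation is that both maps $F_{-a}$ and $F_{\lambda-a}$ are strictly increasing on their natural domain (both have derivative $1/x^2 > 0$), so the preimage of an interval under either map is again an interval. The plan is therefore to compute, explicitly, the preimages of $I_{\lambda,a,\delta}$ under each of the two maps, intersect with $I_{\lambda,a,\delta}$, and then verify that the two resulting subintervals together cover all of $I_{\lambda,a,\delta}$.

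First I would handle $F_{-a}$. Since $F_{-a}$ is increasing, the condition $F_{-a}^{-1}(x) \in [1+\delta, x_{att}]$ is equivalent to $F_{-a}(1+\delta) \le x \le F_{-a}(x_{att})$. The lower endpoint is harmless: $F_{-a}(1+\delta) = 1 - a + \delta/(1+\delta) < 1 + \delta$ whenever $a > 0$, so the constraint $x \ge F_{-a}(1+\delta)$ is automatic on $I_{\lambda,a,\delta}$. For the upper endpoint I would use the fixed-point identity $x_{att} = 2+\lambda-a - 1/x_{att}$ for $F_{\lambda-a}$ to compute $F_{-a}(x_{att}) = 2-a-1/x_{att} = x_{att} - \lambda$. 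Hence
$$\{x \in I_{\lambda,a,\delta} : F_{-a}^{-1}(x) \in I_{\lambda,a,\delta}\} = [1+\delta,\, x_{att} - \lambda].$$

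Next, for $F_{\lambda-a}$: since $x_{att}$ is a fixed point, the upper bound $F_{\lambda-a}^{-1}(x) \le x_{att}$ is automatic on $I_{\lambda,a,\delta}$, and the lower bound gives $x \ge F_{\lambda-a}(1+\delta) = 1 + \lambda - a + \delta/(1+\delta)$. Hence
$$\{x \in I_{\lambda,a,\delta} : F_{\lambda-a}^{-1}(x) \in I_{\lambda,a,\delta}\} = [F_{\lambda-a}(1+\delta),\, x_{att}].$$

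The last step, which is the only one requiring the hypothesis on $\lambda$, is to show that these two subintervals cover $I_{\lambda,a,\delta}$, i.e. that
$$F_{\lambda-a}(1+\delta) \le x_{att} - \lambda.$$
Writing this out explicitly and using $x_{att} = 1 + (\lambda-a)/2 + \sqrt{(\lambda-a) + (\lambda-a)^2/4}$, after simplification the required inequality reduces to
$$\tfrac{3\lambda - a}{2} + \tfrac{\delta}{1+\delta} \le \sqrt{(\lambda - a)\bigl(1 + \tfrac{\lambda-a}{4}\bigr)}.$$
At $a = \delta = 0$ this becomes $3\lambda/2 \le \sqrt{\lambda + \lambda^2/4}$, which is exactly (and strictly) the second hypothesis in \eqref{e.condlambda}. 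By continuity of both sides in $(a,\delta)$, the strict inequality persists for all sufficiently small $a, \delta > 0$, which finishes the argument.

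The only genuine content is the overlap condition in the last step, and the main obstacle is just to keep the algebra clean and to recognize that the second inequality in \eqref{e.condlambda} is precisely what is needed; the first inequality in \eqref{e.condlambda} is not used for this lemma and should come into play later in the proof of Theorem~\ref{t.groundstates} when exponential decay of the eigenfunction has to be controlled.
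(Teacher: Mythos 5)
Your proof is correct and follows the same covering strategy as the paper's: show that $F_{-a}(I_{\lambda,a,\delta}) \cup F_{\lambda-a}(I_{\lambda,a,\delta}) \supseteq I_{\lambda,a,\delta}$, which reduces to the single overlap inequality $F_{\lambda-a}(1+\delta) \le F_{-a}(x_{att})$. The paper handles this by noting the limiting case $F_0(x_{att}(\lambda)) > F_\lambda(1)$ and invoking continuity in $(a,\delta)$, whereas you carry the $(a,\delta)$-dependence through the algebra explicitly, which is fine and in fact a bit more transparent. One small discrepancy worth flagging: your simplification at $a=\delta=0$ lands on the \emph{second} inequality in \eqref{e.condlambda}, while the paper's proof claims the relevant condition is the \emph{first}. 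Both are correct -- a short computation shows the two inequalities in \eqref{e.condlambda} are each equivalent to $\lambda < 1/2$, so they are redundant -- the difference is only that you used the fixed-point identity $F_{-a}(x_{att}) = x_{att}-\lambda$ to simplify, while the paper's author substituted directly. Your closing speculation is, however, off the mark: the paper does use the first inequality for this lemma and the second for Lemma~\ref{l.forwardsequence} (the analogous covering statement for forward iterates), and neither plays any role in controlling the decay rate of the eigenfunction, which comes from the uniform contraction/expansion estimates on the relevant intervals.
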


\begin{proof}
If $\lambda > 0$ is small, then $F_0(x_{att}(\lambda)) > F_{\lambda}(1)$. Indeed, this is equivalent to the first inequality in \eqref{e.condlambda}. By continuity, for any sufficiently small $a>0$ and $\delta>0$, we have $F_{-a}(x_{att}(\lambda-a))>F_{\lambda-a}(\delta)$. This implies that $I_{\lambda, a, \delta}\subset F_{-a}(I_{\lambda, a, \delta})\cup F_{\lambda-a}(I_{\lambda, a, \delta})$, and Lemma \ref{l.inversesequence} follows.
\end{proof}

Denote $\hat I_{\lambda, a, \delta}=[x_{rep}(\lambda-a), 1-\delta]$.

\begin{lemma}\label{l.forwardsequence}
Given a small $\lambda > 0$, for all sufficiently small $a>0$ and $\delta>0$ and for any $x\in \hat I_{\lambda, a, \delta}$, we have $F_{-a}(x)\in \hat I_{\lambda, a, \delta}$ or $F_{\lambda-a}(x)\in \hat I_{\lambda, a, \delta}$ (it is possible that both statements hold).
\end{lemma}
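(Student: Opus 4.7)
The proof parallels that of Lemma~\ref{l.inversesequence}, with the roles of the endpoints of the trapping interval interchanged and with the second inequality in \eqref{e.condlambda} taking the place previously played by the first. The plan is to show the covering property $\hat I_{\lambda, a, \delta} \subseteq F_{-a}^{-1}(\hat I_{\lambda, a, \delta}) \cup F_{\lambda-a}^{-1}(\hat I_{\lambda, a, \delta})$, which is exactly the conclusion of the lemma.

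First I would record the behavior of the two maps at the endpoints of $\hat I_{\lambda, a, \delta}=[x_{rep}(\lambda-a), 1-\delta]$. Both $F_{-a}$ and $F_{\lambda-a}$ are strictly increasing on $(0,\infty)$, since their derivatives equal $1/x^2$. Since $x_{rep} \defeq x_{rep}(\lambda-a)$ is a fixed point of $F_{\lambda-a}$, one has $1/x_{rep}=2+\lambda-a-x_{rep}$, and therefore $F_{-a}(x_{rep})=x_{rep}-\lambda$, which lies below $\hat I_{\lambda,a,\delta}$. On the other hand, at $x=1-\delta$ one checks that $F_{\lambda-a}(1-\delta)>1-\delta$ (it is close to $1+\lambda$ for small $a,\delta$), while $F_{-a}(1-\delta)<1-\delta$ for every $a>0$. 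Thus near $x_{rep}$ only $F_{\lambda-a}$ stays inside $\hat I_{\lambda,a,\delta}$, while near $1-\delta$ only $F_{-a}$ does.

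Next I would introduce the ``dividing point'' $x^*=1/(1+\lambda-a+\delta)$, which by direct computation satisfies $F_{\lambda-a}(x^*)=1-\delta$ and $F_{-a}(x^*)=1-\lambda-\delta$. By monotonicity of $F_{\lambda-a}$ together with $F_{\lambda-a}(x_{rep})=x_{rep}$, one obtains $F_{\lambda-a}([x_{rep},x^*])=[x_{rep},1-\delta]\subseteq \hat I_{\lambda,a,\delta}$, handling the left half. For the right half, monotonicity of $F_{-a}$ gives $F_{-a}([x^*,1-\delta])\subseteq[1-\lambda-\delta,\,F_{-a}(1-\delta)]$, and the right endpoint of this image is already known to be less than $1-\delta$, so the only remaining task is to verify the lower bound $1-\lambda-\delta\ge x_{rep}(\lambda-a)$.

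At $a=\delta=0$ this lower bound reads $1-\lambda\ge 1+\lambda/2-\sqrt{\lambda+\lambda^2/4}$, i.e.\ $\sqrt{\lambda+\lambda^2/4}\ge 3\lambda/2$, which is precisely the (strict) second inequality in \eqref{e.condlambda}. Since $x_{rep}(\lambda-a)$ depends continuously on $a$, the strict version persists, and hence the desired inequality holds for all sufficiently small $a,\delta>0$. Combining the two cases yields the covering property and completes the proof. I do not anticipate any serious obstacle: once the dividing point $x^*$ is identified, the argument reduces to the two elementary computations $F_{\lambda-a}(x^*)=1-\delta$ and $F_{-a}(x^*)=1-\lambda-\delta$, together with the continuity argument based on the second condition in \eqref{e.condlambda}.
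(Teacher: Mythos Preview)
Your proof is correct and follows essentially the same approach as the paper. The paper phrases the key overlap condition as $F_0^{-1}(x_{rep}(\lambda))<F_\lambda^{-1}(1)$ and then passes to small $a,\delta$ by continuity; your dividing point $x^*=F_{\lambda-a}^{-1}(1-\delta)$ is exactly $F_{\lambda-a}^{-1}(1-\delta)$, and your condition $1-\lambda-\delta\ge x_{rep}(\lambda-a)$ is (after applying $F_{-a}$) the same inequality, both reducing at $a=\delta=0$ to $\sqrt{\lambda+\lambda^2/4}>3\lambda/2$.
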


\begin{proof}
If $\lambda > 0$ is small, then $F_0^{-1}(x_{rep}(\lambda))<F_{\lambda}^{-1}(1)$. Indeed, this is equivalent to the inequality $\sqrt{\lambda+\frac{\lambda^2}{4}} > \frac{3\lambda}{2}$, which is satisfied if $\lambda>0$ is sufficiently small.

By continuity, for any sufficiently small $a>0$ and $\delta>0$ we have $F_{-a}^{-1}(x_{rep}(\lambda-a))<F_{\lambda-a}^{-1}(1-\delta)$. This implies that $\hat I_{\lambda, a, \delta}\subset F_{-a}^{-1}(\hat I_{\lambda, a, \delta})\cup F_{\lambda-a}^{-1}(\hat I_{\lambda, a, \delta})$, and Lemma~\ref{l.forwardsequence} follows.
\end{proof}

The next statement is clear.

\begin{lemma}\label{l.transit}
Given a small $\lambda>0$, for all sufficiently small $a>0$ and $\delta>0$ and any $x \in I_{\lambda, a, \delta}$, there exists $k>0$ such that $F_{-a}^k(x)\in \hat I_{\lambda, a, \delta}$.
\end{lemma}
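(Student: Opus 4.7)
My plan is to exploit the absence of fixed points of $F_{-a}$ together with its monotonicity on $(0,\infty)$. For $0 < a < 4$, the equation $F_{-a}(x) = x$ reads $x^{2} - (2-a)x + 1 = 0$ and has discriminant $a(a - 4) < 0$, so $F_{-a}$ has no real fixed points. A direct computation yields the identity
\begin{equation*}
x - F_{-a}(x) = \frac{(x-1)^{2}}{x} + a, \qquad x > 0,
\end{equation*}
which shows that the forward orbit $\{F_{-a}^{k}(x)\}_{k \ge 0}$ of any $x > 0$ is strictly decreasing for as long as it remains positive, with each step of size at least $a$.

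Starting from $x \in I_{\lambda, a, \delta}$, I would then argue that the orbit must cross below $1 - \delta$ in finitely many steps. Indeed, if it did not, monotonicity would force convergence to some limit $L \ge 1 - \delta$, and continuity of $F_{-a}$ would give $F_{-a}(L) = L$, contradicting the absence of fixed points. Let $k \ge 1$ be the first index with $F_{-a}^{k}(x) \le 1 - \delta$, so that $y := F_{-a}^{k-1}(x) > 1 - \delta$. Along the way the iterates remain positive, since their minimum up to step $k$ is bounded below by $F_{-a}(1-\delta)$, which will be positive in the regime we work in.

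It remains to show that $F_{-a}^{k}(x) \ge x_{rep}(\lambda - a)$, which together with $F_{-a}^{k}(x) \le 1 - \delta$ would place the iterate in $\hat I_{\lambda, a, \delta}$. Since $F_{-a}'(x) = 1/x^{2} > 0$, the map $F_{-a}$ is strictly increasing on $(0, \infty)$, so
\begin{equation*}
F_{-a}^{k}(x) = F_{-a}(y) \ge F_{-a}(1 - \delta) = 1 - a - \frac{\delta}{1-\delta}.
\end{equation*}
A Taylor expansion gives $x_{rep}(\lambda - a) = 1 - \sqrt{\lambda - a} + O(\lambda - a)$, so the desired inequality reduces to $\sqrt{\lambda - a} > a + \delta + O(\delta^{2} + \lambda)$, which holds whenever $a$ and $\delta$ are sufficiently small relative to $\sqrt{\lambda}$, consistent with the smallness hierarchy already imposed in Lemmas~\ref{l.inversesequence} and \ref{l.forwardsequence}.

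I do not anticipate any substantial obstacle here; the lemma is a soft one-dimensional dynamics statement, and the only point requiring a modicum of care is the final quantitative comparison between $F_{-a}(1-\delta)$ and $x_{rep}(\lambda - a)$, which is handled by direct Taylor expansion around $a = 0$.
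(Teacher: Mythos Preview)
Your argument is correct. The paper itself offers no proof of this lemma beyond the remark ``The next statement is clear,'' so there is nothing to compare against; your write-up is exactly the kind of elementary one-dimensional dynamics argument one would supply to justify that remark, and the final quantitative check that $F_{-a}(1-\delta) \ge x_{rep}(\lambda-a)$ via the expansion $x_{rep}(\lambda-a) = 1 - \sqrt{\lambda-a} + O(\lambda-a)$ is handled correctly.
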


Now, for any $x \in I_{\lambda, a, \delta}$, we can use Lemma~\ref{l.inversesequence} to generate a sequence of maps such that under the application of their inverses, the orbit of $x$ remains in $I_{\lambda, a, \delta}$ and the corresponding derivative increases exponentially, and Lemmas~\ref{l.forwardsequence} and \ref{l.transit} can be used to construct a forward sequence of iterates with similar properties. This gives a solution that is an eigenfunction that corresponds to the energy $E=2+\lambda-a$ and has positive entries (hence is a ground state).
\end{proof}

It is clear that Theorem~\ref{t.groundstates} gives for each energy close to $2+\lambda$ uncountably many realizations of the random potential such that the energy in question is the ground state energy and an eigenvalue. In fact, one can show that this set of realizations is actually large in other ways. For example, the following statement holds:
\begin{prop}\label{p.Hausdorff}
In the setting of Theorem \ref{t.groundstates}, for any energy $E\in (2+\lambda-\varepsilon, 2+\lambda)$, the set of bi-infinite sequences of $0$'s and $\lambda$'s such that for each of them $E$ is the maximal energy in the spectrum of the associated Schr\"odinger operator has positive Hausdorff dimension.
\end{prop}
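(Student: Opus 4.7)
The strategy is to exhibit, within the set of admissible realizations at energy $E = 2 + \lambda - a$, a sub-shift of $\{0,\lambda\}^{\mathbb{Z}}$ of positive topological entropy, and to invoke the standard fact that such a sub-shift has positive Hausdorff dimension in any product metric on the symbolic space. The starting point is that the strict inequalities underlying the proofs of Lemmas~\ref{l.inversesequence} and \ref{l.forwardsequence} --- namely $F_{-a}(x_{att}(\lambda-a)) > F_{\lambda-a}(1+\delta)$ and $F_{-a}^{-1}(x_{rep}(\lambda-a)) < F_{\lambda-a}^{-1}(1-\delta)$ --- hold with a margin that is uniform for small $a,\delta$. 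Consequently the overlap sub-intervals
$$
J_- := F_{-a}(I_{\lambda,a,\delta}) \cap F_{\lambda-a}(I_{\lambda,a,\delta}) \subset I_{\lambda,a,\delta},
\qquad
J_+ := F_{-a}^{-1}(\hat I_{\lambda,a,\delta}) \cap F_{\lambda-a}^{-1}(\hat I_{\lambda,a,\delta}) \subset \hat I_{\lambda,a,\delta}
$$
have positive length, and on $J_-$ (resp.\ $J_+$) both branches of the backward (resp.\ forward) iteration are admissible; i.e., both choices of $\omega_n \in \{0,\lambda\}$ keep the orbit inside the relevant trapping region.

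The core of the argument is a uniform return-time estimate: every admissible backward orbit on $I_{\lambda,a,\delta}$ must visit $J_-$ at least once in any window of $N_-$ consecutive steps, for some fixed $N_-$ depending only on $\lambda, a, \delta$. The interval splits as $I_{\lambda,a,\delta} = I_- \sqcup J_- \sqcup I_+$ into forced-$\phi_0$, free-choice, and forced-$\phi_1$ regions, where $\phi_0 := F_{-a}^{-1}$ and $\phi_1 := F_{\lambda-a}^{-1}$. Since $\phi_0$ is strictly increasing with $\phi_0(x) > x$ throughout $I_{\lambda,a,\delta}$, compactness yields $\inf_{x \in \overline{I_-}}(\phi_0(x)-x) > 0$, so iterated $\phi_0$ exits $I_-$ to the right in at most $|I_-|/\inf(\phi_0-\mathrm{id})$ steps. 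Symmetrically $\phi_1(x) < x$ on $I_+$, so iterated $\phi_1$ exits $I_+$ to the left in finitely many steps. Once one rules out infinite ping-pong between $I_-$ and $I_+$ --- using the uniform expansion $|\phi_i'(x)| = \phi_i(x)^2 \ge (1+\delta)^2 > 1$ --- one obtains the desired bound $N_-$. Consequently the backward orbit enters $J_-$ at least $\lfloor n/N_- \rfloor$ times in any $n$ steps, and since at each such visit both branches are admissible, the number of admissible backward words of length $n$ is at least $2^{\lfloor n/N_- \rfloor}$. A parallel argument yields analogous bounds for the forward dynamics on $\hat I_{\lambda,a,\delta}$.

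Gluing the backward half at $n < 0$ to the forward half at $n \ge k$ via the finite transition block $F_{-a}^k$ provided by Lemma~\ref{l.transit} produces an injective coding from a product sub-shift of $\{0,1\}^{\mathbb{Z}}$ of topological entropy at least $\min(\log 2/N_-,\log 2/N_+) > 0$ into the set of admissible bi-infinite realizations in $\{0,\lambda\}^{\mathbb{Z}}$. Since positive topological entropy of a sub-shift entails positive Hausdorff dimension in the standard product metric on the sequence space, the proposition follows. The principal technical obstacle is the uniform return-time bound $N_-$: while the monotone drift in $I_\pm$ supplies the intuition, a rigorous argument requires ruling out orbits that linger near the boundary of $J_-$, combining the drift estimate with the expansion to prevent ping-pong between $I_-$ and $I_+$ from sustaining indefinitely.
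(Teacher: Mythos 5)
Your proposal takes essentially the same route as the paper: extract the free-choice/forced-step combinatorics from the construction in Theorem~\ref{t.groundstates}, show that free choices recur with uniformly bounded gap, and translate that into a Hausdorff-dimension lower bound. The paper packages the last step as an abstract lemma (Proposition~\ref{p.hausdorffdim}), proved by exhibiting a H\"older surjection from the admissible set onto the full shift, whereas you invoke the positive-topological-entropy-implies-positive-Hausdorff-dimension principle for sub-shifts; these are two standard ways to extract the same bound, and your version is certainly acceptable. In fact you spell out the bounded-return-time claim, which the paper leaves as the terse assertion that ``the combinatorics is described by the setting of Proposition~\ref{p.hausdorffdim}.''

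One piece of your argument is misdirected. You worry about ``infinite ping-pong between $I_-$ and $I_+$'' and propose to rule it out with the uniform expansion $|\phi_i'(x)| = \phi_i(x)^2 \ge (1+\delta)^2 > 1$. The expansion has nothing to do with the return time; it controls the decay rate of the resulting eigenfunction (hence $\ell^2$-membership), not the frequency of free choices. The reason ping-pong cannot occur is purely geometric: writing $x = 1 + s$, one computes $\phi_0(x) - x = \frac{s^2 + a(1+s)}{1-a-s} = O(s^2 + a)$ and similarly $|\phi_1(x) - x| = O(s^2 + \lambda)$, so on $I_{\lambda,a,\delta}$ (where $s \le x_{\mathrm{att}} - 1 \sim \sqrt{\lambda}$) the one-step displacement is $O(\lambda + a)$, while the free-choice overlap $J_-$ has length of order $\sqrt{\lambda}$ (being the strict-inequality gap in Lemma~\ref{l.inversesequence}); for $\lambda$, $a$ small the orbit therefore cannot jump across $J_-$, and after the bounded number of forced steps supplied by the drift estimate the orbit lands in $J_-$. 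With this correction your argument closes, and the resulting return-time constant feeds into the paper's Proposition~\ref{p.hausdorffdim} (or, equivalently, your entropy count) to give the claimed positive Hausdorff dimension.
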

Notice that we do not need to specify a metric in the space of bi-infinite sequences as soon as it is one of the standard metrics, such as for example
$$d(\omega, \bar\omega)=\left\{
                                                                                      \begin{array}{ll}
                                                                                        0, & \hbox{if $\omega=\bar \omega$;} \\
                                                                                        2^{-k}, & \hbox{$\omega_i=\bar \omega_i$ if $|i|<k$ and either $\omega_k\ne \bar\omega_k$, or $\omega_{-k}\ne \bar\omega_{-k}$}
                                                                                      \end{array}
                                                                                    \right.
$$
or
$$
d(\omega, \bar\omega)=\sum_{n=-\infty}^{\infty}2^{-|n|}|\omega_n-\bar\omega_n|,
$$
since a H\"older homeomorphism sends a set of positive Hausdorff dimension to a set of positive Hausdorff dimension. Proposition \ref{p.Hausdorff} is implied by the following statement:

\begin{prop}\label{p.hausdorffdim}
Let $\Omega=\{0,1\}^\mathbb{N}=\{\omega=\omega_1\omega_2\ldots \omega_i\ldots\}$ be the space of sequences of $0$'s and $1$'s equipped with a standard metric, e.g
$$
d(\omega, \bar\omega)=\left\{
                                                                                      \begin{array}{ll}
                                                                                        0, & \hbox{if $\omega=\bar \omega$;} \\
                                                                                        2^{-k}, & \hbox{$\omega_i=\bar \omega_i$ if $i<k$ and $\omega_k\ne \bar\omega_k$.}
                                                                                      \end{array}
                                                                                    \right.
$$
Suppose $N\in \mathbb{N}$ and denote by $W_N$ the space of all finite sequences of $0$'s and $1$'s of length at most $N$ (empty sequence included). Denote by $W$ the space of all finite sequences of $0$'s and $1$'s, and suppose that a map $g:W\to W_N$ is given.

Let $\tilde W$ be the smallest space of finite words that has the following properties:

\vspace{4pt}

1) The empty word belongs to $\tilde W$;

\vspace{4pt}

2) If a finite word $a$ is from $\tilde W$, then the concatenation of $a$, $g(a)$, and $0$, as well as concatenation of $a$, $g(a)$, and $1$, belong to $\tilde W$.

\vspace{4pt}

Define $\tilde \Omega \subset \Omega$ to be the set of sequences $\omega\in \Omega$ such that for any $m\in \mathbb{N}$, there is a finite word $w\in \tilde W$ of length at least $m$ such that $\omega$ starts with $w$.

Then $\dim_H \tilde \Omega\ge \frac{1}{N+1} \dim_H \Omega.$ In particular, $\tilde \Omega$ has positive Hausdorff dimension.
\end{prop}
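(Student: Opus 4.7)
The plan is to apply the mass distribution principle: exhibit a natural probability measure $\mu$ on $\tilde\Omega$ whose ball measures decay like $\mu(B(x,2^{-m})) \lesssim (2^{-m})^{1/(N+1)}$, which will yield $\dim_H \tilde\Omega \ge \frac{1}{N+1}$. Combined with the standard fact $\dim_H \Omega = 1$ for the given metric on $\{0,1\}^{\N}$, this gives the claimed inequality $\dim_H \tilde\Omega \ge \frac{1}{N+1}\,\dim_H \Omega$, from which positivity is immediate.

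First I would view $\tilde W$ as a rooted binary tree whose root is the empty word and whose two children at a node $a$ are $a \cdot g(a) \cdot 0$ and $a \cdot g(a) \cdot 1$. Each infinite left/right path $b=(b_1,b_2,\ldots)\in\{0,1\}^\N$ traces out a nested chain of nodes whose lengths grow by at most $N+1$ per step, and the corresponding prefixes converge to a unique element $\Phi(b)\in\tilde\Omega$. By the minimality of $\tilde W$ in the definition of $\tilde\Omega$, the resulting map $\Phi\colon\{0,1\}^\N\to\tilde\Omega$ is a bijection. The combinatorial heart of the argument is the following observation: if two paths $b,b'$ first disagree at level $j$, then both visit the same ancestor $a_{j-1}$, and the two nodes at level $j$ are $a_{j-1}\cdot g(a_{j-1})\cdot 0$ and $a_{j-1}\cdot g(a_{j-1})\cdot 1$, which share a common length $L_j$, agree on positions $1,\ldots,L_j-1$, and differ only at position $L_j$. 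Since deeper levels extend these words without altering positions $\le L_j$, the infinite sequences $\Phi(b)$ and $\Phi(b')$ first disagree at position $L_j$ as well.

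Taking $\mu$ to be the pushforward under $\Phi$ of the symmetric Bernoulli measure on $\{0,1\}^\N$, I would then estimate $\mu(B(x,2^{-m}))$ for a fixed $x\in\tilde\Omega$ with branching path $b^x:=\Phi^{-1}(x)$. The observation above shows that $\omega=\Phi(b)$ lies in $B(x,2^{-m})$ (equivalently $\omega|_m=x|_m$) precisely when $b$ agrees with $b^x$ at every level $k$ for which $L_k(b^x)\le m$. Since $L_k(b^x)\le k(N+1)$, the smallest $k$ with $L_k(b^x)\ge m+1$ is at least $(m+1)/(N+1)$, and hence
\[
\mu(B(x,2^{-m})) \;\le\; 2^{\,1-(m+1)/(N+1)} \;\le\; C\cdot (2^{-m})^{1/(N+1)}
\]
for a universal constant $C>0$. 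The mass distribution principle then gives $\dim_H\tilde\Omega\ge 1/(N+1)$, and the conclusion follows.

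The step requiring the most care, rather than a genuine obstacle, is extracting the combinatorial lemma about where two tree words first disagree; once this is in place, the Frostman-type ball estimate and the resulting dimension bound are routine applications of standard tools from geometric measure theory. A small preliminary point is verifying the bijectivity of $\Phi$ and the fact that $\Phi$ is Lipschitz in both directions up to the identification of metrics, both of which follow immediately from the minimality of $\tilde W$ and the uniform upper bound $N+1$ on branching increments.
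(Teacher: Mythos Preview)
Your argument is correct, and it rests on the same combinatorial fact as the paper's proof: along any branch of the tree $\tilde W$, the word length grows by at most $N+1$ at each step, so the level index and the word length are comparable up to the factor $N+1$. The packaging, however, is different. The paper works with the \emph{decoding} map $G:\tilde\Omega\to\Omega$ (your $\Phi^{-1}$), reading off the sequence $i_1i_2\cdots$ of binary choices made along the branch, and observes directly that if $G(\omega)$ and $G(\bar\omega)$ first differ at level $k$, then $\omega$ and $\bar\omega$ can agree on at most $k(N+1)$ symbols; this is precisely $\frac{1}{N+1}$-H\"older continuity of $G$, and since $G$ is onto, one concludes $\dim_H\Omega\le (N+1)\dim_H\tilde\Omega$ by the standard one-line fact about H\"older surjections and Hausdorff dimension.

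Your route via the mass distribution principle is the natural dual: instead of bounding how much $G$ contracts, you bound how slowly $\Phi$ spreads the Bernoulli mass, and invoke Frostman. The H\"older-map argument is a bit shorter, since it avoids the explicit ball-measure computation and the appeal to the mass distribution principle. On the other hand, your approach produces a concrete Frostman measure on $\tilde\Omega$, which would be useful if one wanted finer information (e.g., positivity of a specific Hausdorff measure) beyond the bare dimension estimate. Your remark that $\Phi$ is bi-Lipschitz up to the metric identification is essentially the paper's H\"older estimate rephrased, so you could also have finished that way.
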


\begin{remark}
It is clear from the proof that an analog of Proposition \ref{p.hausdorffdim} for the space of bi-infinite sequences also holds, and can be proven in a very similar way.
\end{remark}

\begin{proof}[Proof of Proposition \ref{p.hausdorffdim}.]
Consider the oriented tree $T$ whose vertices are the elements of $\tilde W$, whose root is given by the empty word, and whose edges are given by $a\to b$ if and only if $b$ is a concatenation of $a$, $g(a)$, and $0$, or a concatenation of $a$, $g(a)$, and $1$. 
 An infinite branch of $T$ corresponds to a sequence from $\tilde \Omega$. Notice that any $\omega \in \tilde \Omega$ corresponds to exactly one infinite branch, and hence it can be obtained in a unique way as the limit of the sequence of finite words of the following form:
\begin{align*}
   & a_0=\emptyset \\
   & a_1=a_0 g(a_0)i_1, \ \ \text{where}\ \ i_1\in \{0,1\}\\
   & a_2=a_1 g(a_1)i_2, \ \ \text{where}\ \ i_2\in \{0,1\}\\
   & \ldots\\
   &  a_k=a_{k-1} g(a_{k-1})i_k, \ \ \text{where}\ \ i_k\in \{0,1\}\\
   &\ldots
\end{align*}
Consider the map $G:\tilde \Omega\to \Omega$ given by
$$
G(\omega)=i_1 i_2 \ldots i_k \ldots .
$$
Notice that $G$ is onto. Let us show that $G$ is H\"older continuous. Indeed, if 
$d(G(\omega), G(\bar\omega))=2^{-k}$, then at most $k(N+1)$ first elements of $\omega$ and $\bar \omega$ can coincide, that is, $d(\omega, \bar \omega)\ge 2^{-k(N+1)}$. Therefore,
$$
d(G(\omega), G(\bar \omega)) = 2^{-k} = \left( 2^{-k(N+1)} \right)^{\frac{1}{N+1}} \le \left( d(\omega, \bar \omega) \right)^{\frac{1}{N+1}}.
$$
It is well known (and easy to check) that if for some metric spaces $X$ and $Y$ a map $G:X\to Y$ is $\beta$-H\"older continuous (i.e. $d_Y(G(x_1), G(x_2))\le C(d_X(x_1, x_2))^\beta$ for all $x_1, x_2\in X$ and some $C, \beta>0$), then $\text{dim}_H Y\le \frac{1}{\beta}\text{dim}_H X$. Hence $\dim_H \Omega \le (N+1) \dim_H \tilde \Omega$, and hence
$$
\dim_H \tilde \Omega \ge \frac{1}{N+1} \dim_H \Omega > 0,
$$
as claimed.
\end{proof}
\begin{proof}[Proof of Proposition \ref{p.Hausdorff}.]
In the proof of Theorem~\ref{t.groundstates}, in order to construct a sequence of iterates of the maps $F_{\lambda-a}$ and $F_{-a}$ that leads to the construction of the required eigenvector, one has to use Lemma~\ref{l.inversesequence} and Lemma~\ref{l.forwardsequence}. On some steps the choice of one of the two maps is determined, and in some cases it is arbitrary. The combinatorics is described by the setting of Proposition~\ref{p.hausdorffdim}, and hence the conclusion of Proposition~\ref{p.hausdorffdim} implies Proposition~\ref{p.Hausdorff}.
\end{proof}

We suspect that the set of realizations in question has Hausdorff dimension that tends to the full Hausdorff dimension as $\lambda, a \to 0$, but do not elaborate on that here.

\section{Intervals in the Spectrum of Random Perturbations of Quasi-Periodic Operators}\label{sec.intervalsinSigma}

Let us now consider the case of a quasi-periodic background potential. Specifically, we wish to prove Theorem~\ref{t.main1}. That is, we want to consider the case where the potential is given by a small analytic quasi-periodic sequence with Diophantine frequency and an Anderson part and show that the almost sure spectrum contains an interval. Recall that Theorem~\ref{t.supporttheorem} exhibits a monotonicity property of the almost sure spectrum that allows us to work with Bernoulli randomness.

Namely, let us consider the discrete Schr\"odinger operator \eqref{e.oper} with a potential of the form \eqref{e.potentialsum}, where the background potential is of the form $V_\mathrm{bg}(n) = c f(\theta + n \alpha)$ with an analytic function $f : \mathbb{T}^d\to \mathbb{R}$, a coupling constant $c\ge 0$, and $\theta, \alpha\in \mathbb{T}^d$, and the random piece $V_\omega^\mathrm{AM}$ is of Bernoulli type and generated by a single-site distribution $\nu$ with $\mathrm{supp} \, \nu = \{ 0, \lambda \}$ for some $\lambda > 0$. Let us denote the background operator (i.e., the Schr\"odinger operator with potential $V_\mathrm{bg}$, without the random piece) by $H_{\mathrm{bg},c}$.


\begin{theorem}\label{t.groundstatesforquasiplus}
Suppose that $\alpha$ is Diophantine, the phase $\theta$ is arbitrary, the function $f:\mathbb{T}^d\to \mathbb{R}$ is analytic, and the coupling constant $c>0$ is sufficiently small. Let $E^*_c$ be the largest energy in the spectrum of $H_{\mathrm{bg},c}$. For any sufficiently small $\lambda > 0$, there exists an interval $(E^*_c + \lambda - \varepsilon, E^*_c + \lambda)$, $\varepsilon>0$, such that for any $E\in (E^*+\lambda-\varepsilon, E^*+\lambda)$, there exists an $\omega\in \{0,\lambda\}^{\mathbb{Z}}$ 
such that $E$ is the maximal energy in the spectrum of the corresponding operator $H_\omega$ whose potential term $V_\omega(n)$ is the sum of $V_\mathrm{bg}(n) = c f( \theta + n\alpha )$ and the $\{ 0, \lambda \}$-Bernoulli-Anderson term $V_\omega^\mathrm{AM}(n)$, $E$ is an eigenvalue of $H_\omega$, and the corresponding eigenfunction is exponentially decaying and has strictly positive components.
\end{theorem}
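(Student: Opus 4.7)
The plan is to reduce the problem, via KAM reducibility at the top of the background spectrum, to a $\theta$-dependent perturbation of the pure Bernoulli situation treated in Theorem~\ref{t.groundstates}. Since the rotation number of the quasi-periodic cocycle vanishes at $E^*_c$, Proposition~\ref{p.conjtoconstant} (based on \cite{HA09}) supplies an analytic conjugation $B : \T^d \to \SL(2,\R)$ satisfying
\[
B(\theta+\alpha)^{-1} A^{\mathrm{bg}}_{E^*_c}(\theta) B(\theta) = C,
\]
where $A^{\mathrm{bg}}_E(\theta)$ is the background Schr\"odinger transfer matrix at energy $E$ and $C$ is a constant (parabolic) matrix. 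Using $A^{\mathrm{bg}}_E(\theta) - A^{\mathrm{bg}}_{E^*_c}(\theta) = (E-E^*_c)J$ with $J = \bigl(\begin{smallmatrix}1&0\\0&0\end{smallmatrix}\bigr)$, setting $E = E^*_c + \lambda - a$ with $0 < a < \lambda$ both small, and observing that the $\omega_n = \lambda$ transfer equals $A^{\mathrm{bg}}_{E-\lambda}(\theta+n\alpha)$, the two possible one-step conjugated transfers of $H_\omega$ at energy $E$ become
\[
\omega_n = 0: \quad C + (\lambda-a)\,M(\theta+n\alpha), \qquad \omega_n = \lambda: \quad C - a\,M(\theta+n\alpha),
\]
where $M(\vartheta) = B(\vartheta+\alpha)^{-1} J B(\vartheta)$ is analytic. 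In the limit $c=0$ one has $B=I$ and $M\equiv J$, recovering precisely the constant Bernoulli matrices whose projective actions are $F_{\lambda-a}$ and $F_{-a}$ from the proof of Theorem~\ref{t.groundstates}.

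Parameterizing rays in the first quadrant by the cotangent of the argument, the two transfers induce projective maps $\tilde F^{(0)}_{E,\theta}$ and $\tilde F^{(\lambda)}_{E,\theta}$ that are $C^1$-close, uniformly in $\theta$, to the Bernoulli maps $F_{\lambda-a}$ and $F_{-a}$ provided $c$ is chosen sufficiently small. Since the inequalities \eqref{e.condlambda} that drive Lemmas~\ref{l.inversesequence}--\ref{l.transit} are strict, they persist under small $C^1$-perturbations and yield uniform trapping intervals $\tilde I \subset (1,\infty)$ and $\tilde{\hat I} \subset (0,1)$ enjoying the same selection properties for every $\vartheta \in \T^d$. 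Starting from $x_0 \in \tilde I$ and inductively selecting $\omega_n \in \{0,\lambda\}$ in both directions, just as in the proof of Theorem~\ref{t.groundstates}, produces positive ratios $x_n$ along which the conjugated cocycle expands exponentially. Pulling back through the bounded conjugation $B(\theta+n\alpha)$, which is close to the identity for small $c$ and hence maps the trapping intervals into the positive quadrant, yields a strictly positive, exponentially decaying solution $u$ of $H_\omega u = E u$. Since a strictly positive $\ell^2$ eigenvector forces $E = \sup \sigma(H_\omega)$, this gives Theorem~\ref{t.groundstatesforquasiplus}.

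The main obstacle is the uniformity in $\theta$: the conjugation $B$ turns the constant coefficient $J$ of the Bernoulli case into the non-constant analytic function $M(\vartheta)$, so the two projective maps inherit a $\theta$-dependence and the analogs of Lemmas~\ref{l.inversesequence}--\ref{l.transit} have to hold uniformly over the dense orbit $\{\theta+n\alpha\}_{n\in\Z}$. In practice this amounts to balancing three small parameters---the coupling $c$ (which controls $\|M-J\|_{C^0}$ and the deviation of $B$ from the identity), the Bernoulli gap $\lambda$, and the depth $a$ below the top of the spectrum---so that the $C^1$-perturbation of the projective maps stays strictly inside the open margin provided by \eqref{e.condlambda}, uniformly in $\theta$. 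A secondary technical point is to verify that the image of the trapping intervals under the pull-back $B(\theta+n\alpha)$ still lies in the positive quadrant, so that positivity of the eigenvector in the original frame is preserved; this again follows from the smallness of $c$.
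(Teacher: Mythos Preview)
Your outline matches the paper's proof: both conjugate via Proposition~\ref{p.conjtoconstant} to reduce the cocycle at $E^*_c$ to the constant parabolic, pass to cotangent coordinates, observe that the two conjugated transfers at energy $E^*_c+\lambda-a$ are $\theta$-dependent perturbations of the Bernoulli maps $F_{\lambda-a}$ and $F_{-a}$, and then rerun the selection argument from Theorem~\ref{t.groundstates}. The one substantive difference is that where you invoke a soft ``strict inequalities persist under small $C^1$-perturbations'' argument with fixed trapping intervals, the paper replaces the fixed points $x_{att},x_{rep}$ by genuine $\theta$-dependent invariant sections $\psi_{att},\psi_{rep}$ obtained via a graph-transform (Proposition~\ref{p.graphtransform}), and uses cylinders bounded by these graphs as the trapping regions; this is precisely the mechanism that makes the backward/forward covering lemmas (Lemmas~\ref{l.inversecylinder}--\ref{l.transitcyl}) hold uniformly in $\theta$, and is the step you would need to flesh out.
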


\begin{remark}
(a) Notice that Theorem~\ref{t.groundstatesforquasiplus} is an analog of Theorem~\ref{t.groundstates}, and this clarifies why we stated and proved the latter theorem -- really to exhibit the key ideas in a simpler setting.

(b) It would be interesting to see whether an analog of Theorem \ref{t.eigenvectors} holds in the present setting as well, that is, the top of the spectrum is such that no decaying solutions exist, provided that the potential is given by the sum of a suitable Bernoulli Anderson term and a small analytic quasi-periodic term with Diophantine frequency. Note that if we allow the quasi-periodic term to be large, then by the work of Soshnikov and Bjerkl\"ov mentioned before \cite{Bje1, Bje2, Sosh}, there will be a phase one can choose for the quasi-periodic term and a realization of the random term (namely the constant one) for which there exists an exponentially decaying solution of the difference equation with energy given by the top of the spectrum.
\end{remark}

We will start with the following statement.

\begin{prop}\label{p.conjtoconstant}
Suppose $\alpha$ is Diophantine and $f : \T^d \to \mathbb{R}$ is analytic. Then there is $c_0 > 0$ such that for every $c \in [0,c_0)$, the following holds:

Denote by $E^*_c$ be the top energy in the spectrum of the operator $H_{\mathrm{bg},c}$. There exists $Z_c : \T^d \to \mathrm{SL}(2, \mathbb{R})$ analytic such that
$$
\Pi_c(\theta+\alpha)=Z_c(\theta+\alpha) B Z_c^{-1}(\theta),
$$
where $B=\begin{pmatrix}
           2 & -1 \\
           1 & 0 \\
         \end{pmatrix}$
         and
         $\Pi_c(\theta)=\begin{pmatrix}
           E_c^*- cf(\theta) & -1 \\
           1 & 0 \\
         \end{pmatrix}$. Moreover, we have $Z_c(\theta) \to \begin{pmatrix}
           1 & 0 \\
           0 & 1 \\
         \end{pmatrix}$, uniformly in $\theta$, as $c \to 0$.
\end{prop}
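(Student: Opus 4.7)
The plan is to reduce everything to an analytic reducibility theorem at the edge of the quasi-periodic spectrum, namely the result cited as [HA09]. At $c=0$ the background operator is the free Laplacian with $\sigma(H_{\mathrm{bg},0})=[-2,2]$, so $E_0^*=2$ and $\Pi_0(\theta)\equiv B$; taking $Z_0\equiv I$ gives the claim trivially. For $c>0$ I would first note that $E_c^*$ depends continuously on $c$ (since the spectrum of a bounded self-adjoint operator depends Lipschitz continuously on the potential in the $\ell^\infty$-norm), so $E_c^*\to 2$ as $c\to 0$; together with the analyticity of $f$ and the smallness of $c$, this makes $\theta\mapsto \Pi_c(\theta)$ analytically close to the constant $B$ on a fixed complex strip $|\Im\theta|<r$.

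Next I would identify the fibered rotation number $\rho_c$ of the cocycle $(\alpha,\Pi_c)$ at $E=E_c^*$. Since $\rho(\cdot)$ is continuous and non-decreasing in $E$, constant on spectral gaps, and identically $0$ on $(\sup\sigma(H_{\mathrm{bg},c}),\infty)$, we have $\rho_c=0$. Because $E_c^*$ lies on the edge of the spectrum, the cocycle $(\alpha,\Pi_c)$ is not uniformly hyperbolic, which combined with $\rho_c=0$ forces any eventual constant target $M_c$ of a reduction to be parabolic with $\mathrm{tr}\,M_c=+2$: it cannot be elliptic (rotation number zero), not uniformly hyperbolic (spectral edge), and not the identity (otherwise a gap above $E_c^*$ would have collapsed, contradicting $E_c^*=\sup\sigma(H_{\mathrm{bg},c})$).

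With this geometric input in place, I would invoke the reducibility statement of [HA09]: because $\alpha$ is Diophantine, $\Pi_c$ is analytically close to $B$, and $\rho_c=0$, there exist an analytic $Z_c:\T^d\to \SL(2,\mathbb{R})$ and a constant $M_c\in \SL(2,\mathbb{R})$ with
\[
\Pi_c(\theta+\alpha)=Z_c(\theta+\alpha)\,M_c\,Z_c^{-1}(\theta).
\]
By the preceding paragraph $M_c$ is a non-identity parabolic with trace $+2$, hence conjugate in $\SL(2,\mathbb{R})$ to $B$ via some constant matrix $P_c$. Replacing $Z_c(\theta)$ by $Z_c(\theta)P_c^{-1}$ absorbs the constant change of basis and yields $M_c=B$ in the reduction while preserving analyticity.

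The continuity statement $Z_c\to I$ as $c\to 0$ would come from the structure of the KAM scheme underlying [HA09]: $Z_c$ is produced as a convergent infinite product of near-identity analytic corrections, each of size $O(\|\Pi_c-B\|_r)$, so as $\|\Pi_c-B\|_r\to 0$ the product collapses uniformly to $I$, and the normalizing $P_c$ can be chosen to tend to $I$ as well. I expect the main technical obstacle to be the correct invocation of reducibility at the \emph{rational} rotation number $\rho_c=0$: Eliasson's classical theorem provides full-measure reducibility at Diophantine $\rho$, whereas $\rho=0$ is the critical resonant case where gaps can generically open, and the role of [HA09] here is precisely to furnish reducibility (as opposed to mere almost reducibility) in this edge-of-spectrum situation. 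Carefully tracking the parabolic normal form so that the constant target matches $B$ rather than $I$ or the trace $-2$ parabolic appearing at the bottom of the spectrum is the delicate bookkeeping step.
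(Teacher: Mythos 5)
Your proposal follows essentially the same route as the paper: identify the fibered rotation number of $(\alpha,\Pi_c)$ at $E=E_c^*$ as zero, invoke the reducibility result of \cite{HA09} to conjugate to a constant $M_c\in\mathrm{SL}(2,\R)$, rule out elliptic/identity/hyperbolic to force a nontrivial parabolic, and then use a constant change of basis plus the quantitative closeness $\hat Z_c\to I$ coming from the KAM iteration (cf.\ \cite[Proposition~5]{HA09}, \cite[Proposition~3.1]{CCYZ19}) to land on $B$ with $Z_c\to I$.

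Two points are glossed over. First, your reason for excluding $M_c=I$ --- ``otherwise a gap above $E_c^*$ would have collapsed'' --- is not a real argument as stated. The energy $E_c^*$ sits at the edge of the spectrum regardless, so it is not clear what collapse is being referenced or why conjugacy to $I$ would contradict it. The paper excludes $M_c=I$ via Corollary~\ref{c.dyndefsetting}, whose proof in turn relies on Proposition~\ref{prop.HLexistenceofSS}: at the ground state energy the half-line operators admit a subordinate solution, and if the constant part were $I$ then all solutions would be bounded with no subordinate one, a contradiction. You would need to supply something equivalent. Second, \cite[Theorem~1]{HA09} produces a conjugacy defined on $(2\T)^d$, not on $\T^d$; the paper removes this period doubling via \cite[Lemma~2]{Puig}. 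Since the statement to be proved demands $Z_c:\T^d\to\mathrm{SL}(2,\R)$, this step cannot be skipped, and your proposal does not address it.
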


\begin{proof}
By definition, $E^*_c$ is the ground state energy of $H_c$ and hence by general principles \cite{DF21}, the rotation number of the cocycle $(\alpha,\Pi_c)$ is zero. In particular, the rotation number of this cocycle is Diophantine with respect to $\alpha$ and therefore, by \cite[Theorem 1]{HA09}, the cocycle $(\alpha,\Pi_c)$ can be analytically conjugated to a constant parabolic cocycle $(\alpha, B_c)$ with $B_c$ being a parabolic matrix.
Indeed, the constant cocycle one conjugates to cannot be non-identity elliptic because the rotation number is zero, it cannot be the identity by Corollary \ref{c.dyndefsetting}, 
and it cannot be hyperbolic because the energy in question belongs to the spectrum. Moreover, while \cite[Theorem 1]{HA09} yields a conjugacy that is defined on $(2 \T)^d$, we can feed the output of  \cite[Theorem 1]{HA09} into \cite[Lemma 2]{Puig}\footnote{Puig only states \cite[Lemma 2]{Puig} in the case $d = 1$, but the result and proof extend to the case of general $d \in \N$ in a straightforward way.} to obtain a conjugacy that is defined on $\T^d$. Therefore, there exists an analytic matrix valued function $\hat Z_c: \T^d \to \mathrm{SL}(2, \mathbb{R})$ such that
$$
\Pi_c(\theta+\alpha)=\hat Z_c(\theta+\alpha) B_c \hat Z_c^{-1}(\theta).
$$
As a consequence of \cite[Proposition~5]{HA09} (see also \cite[Proposition~3.1]{CCYZ19}), we get that $\hat Z(\theta)$ can be constructed in such  a way that $\hat Z(\theta) \to \begin{pmatrix}
           1 & 0 \\
           0 & 1 \\
         \end{pmatrix}$,
uniformly in $\theta$, as $c \to 0$.  Since $\Pi_c(\theta)\to B$ uniformly in $\theta$ as $c \to 0$, this implies that $B_c\to B$ as $c\to 0$. Since any parabolic matrix $B_c$ close to $B$ may be conjugated to $B$, and the conjugacy can be taken small if $B_c$ is close to $B$, Proposition~\ref{p.conjtoconstant} follows.
%
\end{proof}

For notational simplicity, we consider in the following the case $d = 1$. We invite the reader to verify that all statements and arguments extend to the case of general $d \in \N$.

For $\lambda > 0$, let us now introduce another cocycle, given by the matrices
$$
\Pi_{c, \lambda}(\theta)=\begin{pmatrix}
           E^*_c+\lambda-cf(\theta) & -1 \\
           1 & 0 \\
         \end{pmatrix}.
$$
Due to Johnson's theorem we know that this cocycle is uniformly hyperbolic. We will work in the coordinate system (provided by Proposition \ref{p.conjtoconstant}) in which the cocycle $\{\Pi(\theta)\}_{\theta\in \T}$ over the $\alpha$-rotation of the circle is constant.

We have
$$
B=Z^{-1}(\theta)\Pi_c(\theta)Z(\theta-\alpha).
$$
Denote $B_\lambda(\theta)=Z^{-1}(\theta)\Pi_{c, \lambda}(\theta)Z(\theta-\alpha)$. Notice that $B_\lambda$ converges to $B$ (as a function of $\theta$) in $C^\infty(\T, \SL(2, \mathbb{R}))$ as $\lambda\to 0$.

Let us parametrize unit vectors in the first quadrant by the cotangent of the argument. In that coordinate system the projective map defined by the matrix $B$ turns into
$$
F_0:(0, \infty)\to \mathbb{R}, \; F_0(x)=2-\frac{1}{x}.
$$
Notice that $F_0(1)=1$. Consider some compact interval containing $1$, for example $[1/2, 5]$. Let us extend the map $F_0$ to the map (we abuse notation by denoting the extended map by the same symbol) $F_0:\T\times [1/2, 5]\to \T\times \mathbb{R}$ by setting
$$
F_0(\theta, x)=\left(\theta +\alpha, 2-\frac{1}{x}\right).
$$
Let us now see how the projective action of the cocycle $\{\Pi_\lambda(\theta)\}$ looks like for small $\lambda>0$ in the same coordinate system.
 Notice that
 $$
 \Pi_{c, \lambda}(\theta)=\begin{pmatrix}
           1 & \lambda \\
           0 & 1 \\
         \end{pmatrix}\Pi(\theta),
$$
so
\begin{equation}\label{e.blambda}
 B_\lambda(\theta)=Z^{-1}(\theta)\begin{pmatrix}
           1 & \lambda \\
           0 & 1 \\
         \end{pmatrix}\Pi_c(\theta)Z(\theta-\alpha).
\end{equation}
Since $Z(\theta)$ is close to the identity matrix, from here we can see that for small $c>0$ and small $\lambda>0$, the projective map defined by $B_\lambda(\theta)$ in the coordinates given by the cotangent of the argument is defined on $[1/2, 5]$ and has the form
$$
x\mapsto 2-\frac{1}{x}+\varphi_\lambda(\theta, x),
$$
where for any $r\ge 0$, we have that $\|\varphi_\lambda(\theta,x)\|_{C^r}$ is small if $\lambda$ is small, and in fact $\|\varphi_\lambda(\theta,x)\|_{C^r}\le C\lambda$ for some $C>1$ independent of $\lambda$ (we suppress the dependence of $\varphi_\lambda$ on $c$ to simplify the notation). It is convenient to incorporate all these maps into a skew product
$$
F_{\varphi_\lambda}:\T\times [1/2, 5]\to \T\times \mathbb{R}, F_{\varphi_\lambda}(\theta, x)=\left(\theta+\alpha, 2+\varphi_\lambda(\theta, x)-\frac{1}{x}\right).
$$
Notice that the explicit form \eqref{e.blambda} and the fact that $Z(\theta)$ is close to the identity imply that the image of any unit vector corresponding to a point in $[1/2, 5]$ under $B_\lambda(\theta)$ is going to move monotonically as $\lambda$ changes, with speed of order $\lambda$. In the notations above this means that there exists $C>1$ such that for any $x\in [1/2, 5]$ and any $\theta \in \T$, we have
$$
C^{-1}\lambda < \varphi_\lambda < C\lambda.
$$

We will need the following statement now. 

\begin{prop}\label{p.graphtransform}
Consider the map $F_0:\T\times [1/2, 5]\to \T\times \mathbb{R}$ given by
$$
F_0(\theta, x)=\left(\theta +\alpha, 2-\frac{1}{x}\right).
$$
Suppose $\varphi_\lambda:\T\times [1/2, 5]\to \mathbb{R}$ is a positive smooth function such that there exists $C>1$ (independent of $\lambda$) with
\begin{equation}\label{e.varphiconditions}
C^{-1}\lambda< \varphi_\lambda<C\lambda\ \  \text{and}\ \ \ \|\varphi_\lambda\|_{C^2}\le C\lambda.
\end{equation}
Consider
$$
F_{\varphi_\lambda}:\T\times [1/2, 5]\to \T\times \mathbb{R}, F_{\varphi_\lambda}(\theta, x)=\left(\theta+\alpha, 2+\varphi_\lambda(\theta, x)-\frac{1}{x}\right).
$$

Then there exist $C^*>0$ such that for any sufficiently small $\lambda>0$ there is $\gamma=\gamma(\lambda)>0$ such that the following holds. The map $F_{\varphi_\lambda}$ has two smooth invariant sections, $\psi_{att}:\T\to [1/2, 5]$ and $\psi_{rep}:\T\to [1/2, 5]$, such that
$$
F_{\varphi_\lambda}(\theta, \psi_{att}(\theta))=(\theta+\alpha, \psi_{att}(\theta+\alpha))
$$
and
$$
F_{\varphi_\lambda}(\theta, \psi_{rep}(\theta))=(\theta+\alpha, \psi_{rep}(\theta+\alpha)),
$$
the $C^0$-distance between $\psi_{att}$ and $\psi_{rep}$ is of order $\sqrt{\lambda}$ (additionally, the graph of $\psi_{att}$ is a subset of $\T\times [1+C^*\sqrt{\lambda}, 5]$, and the graph of $\psi_{rep}$ is a subset of  $\T\times [1/2, 1-C^*\sqrt{\lambda}]$), any point $(\theta, x)$ between the curves defined by $\psi_{att}$ and $\psi_{rep}$ is attracted to the graph of $\psi_{att}$ under the iterates of $F_{\varphi_\lambda}$, and to the graph of $\psi_{rep}$ under the iterates of $F_{\varphi_\lambda}^{-1}$. In addition, for any point $(\theta, x)\in \T\times [1+C^*\sqrt{\lambda}, 5]$, we have $\left|\frac{dF_{\varphi_\lambda}}{dx}\right|<1-\gamma$, and for any point $(\theta, x)\in \T\times [1/2, 1-C^*\sqrt{\lambda}]$, we have $\left|\frac{dF_{\varphi_\lambda}}{dx}\right|>1+\gamma$.
\end{prop}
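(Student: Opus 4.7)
The plan is to interpret this as a saddle-node bifurcation in a skew-product setting: the unperturbed fiber map $F_0(x)=2-1/x$ has a semi-stable fixed point at $x=1$ with $F_0'(1)=1$, and the strictly positive perturbation $\varphi_\lambda\sim\lambda$ resolves this tangency, splitting it into an attracting and a repelling invariant graph at distance of order $\sqrt\lambda$ from the line $\{x=1\}$. The two graphs will be produced by a standard graph-transform (invariant section) argument on fiber-forward-invariant cylinders, once the appropriate hyperbolicity bounds are in place.

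First I would set up the trapping regions and the fiber-wise hyperbolicity estimates. Differentiating gives $\partial_x F_{\varphi_\lambda}(\theta,x)=1/x^2+\partial_x\varphi_\lambda(\theta,x)$, so $|\partial_x\varphi_\lambda|\le C\lambda$ by hypothesis, and on the upper band $\{x\ge 1+C^*\sqrt\lambda\}$ we have $1/x^2\le 1-2C^*\sqrt\lambda+O(\lambda)$, yielding $|\partial_x F_{\varphi_\lambda}|\le 1-\gamma$ with $\gamma\sim\sqrt\lambda$; symmetrically $|\partial_x F_{\varphi_\lambda}|\ge 1+\gamma$ on $\{x\le 1-C^*\sqrt\lambda\}$. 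For forward invariance of $\mathbb{T}\times[1+C^*\sqrt\lambda,5]$, the identity $F_0(x)-x=-(x-1)^2/x$ gives
\[
F_{\varphi_\lambda}(\theta,1+C^*\sqrt\lambda)-(1+C^*\sqrt\lambda)=\varphi_\lambda(\theta,\cdot)-(C^*)^2\lambda+O(\lambda^{3/2}),
\]
positive as long as $C^*$ is chosen with $(C^*)^2<C^{-1}$, using $\varphi_\lambda>C^{-1}\lambda$. The upper endpoint $x=5$ is trivial; monotonicity of $F_{\varphi_\lambda}(\theta,\cdot)$ in $x$ then propagates forward invariance to the whole band, and a symmetric computation gives backward invariance of $\mathbb{T}\times[1/2,1-C^*\sqrt\lambda]$.

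Next I would apply the graph transform. On the Banach space of continuous sections $\psi:\mathbb{T}\to[1+C^*\sqrt\lambda,5]$ with the sup norm, define $\mathcal{G}\psi$ as the section whose graph is the $F_{\varphi_\lambda}$-image of the graph of $\psi$. Forward invariance makes $\mathcal{G}$ a self-map, and the fiber contraction $|\partial_x F_{\varphi_\lambda}|\le 1-\gamma$ yields
\[
\|\mathcal{G}\psi-\mathcal{G}\tilde\psi\|_\infty\le(1-\gamma)\|\psi-\tilde\psi\|_\infty,
\]
so $\mathcal{G}$ admits a unique fixed point $\psi_{att}$. Smoothness follows from the standard bootstrap using an invariant cone field for tangent slopes, applied iteratively in each $C^r$ via the $C^2$ bound on $\varphi_\lambda$ and the rigidity of the base rotation. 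The repelling section $\psi_{rep}$ is obtained from the analogous graph transform for $F_{\varphi_\lambda}^{-1}$ on sections into $[1/2,1-C^*\sqrt\lambda]$.

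Finally I would pin down the $\sqrt\lambda$-scale separation and the dynamics between the graphs. The same algebra shows that the annular band $\mathbb{T}\times[1+C^*\sqrt\lambda,1+\sqrt{C\lambda}]$ is also forward-invariant (the upper endpoint $1+B\sqrt\lambda$ requires $B^2\lambda\ge\varphi_\lambda$, met by $B=\sqrt{C}$), so $\psi_{att}(\theta)\le 1+\sqrt{C\lambda}$; the symmetric statement for $\psi_{rep}$ forces $\|\psi_{att}-\psi_{rep}\|_\infty$ to be of order $\sqrt\lambda$. For the in-between dynamics, monotonicity of $F_{\varphi_\lambda}(\theta,\cdot)$ preserves the order of fiber points, so any forward orbit starting strictly between the two graphs stays between them; on the lower band the expansion factor $1+\gamma$ pushes orbits away from $\psi_{rep}$, while in the intermediate sub-band $|x-1|\le C^*\sqrt\lambda$ the strict drift
\[
F_{\varphi_\lambda}(\theta,x)-x=\varphi_\lambda(\theta,x)-(x-1)^2/x\ge C^{-1}\lambda-(C^*)^2\lambda>0
\]
forces the orbit into the upper band in $O(1/\sqrt\lambda)$ steps, after which the fiber contraction drags it onto $\psi_{att}$; iterating $F_{\varphi_\lambda}^{-1}$ gives the symmetric convergence to $\psi_{rep}$. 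The main obstacle is precisely the degeneracy at $x=1$: because $F_0'(1)=1$, none of the hyperbolicity estimates are uniform in $\lambda$, and one must carefully calibrate the $\sqrt\lambda$-scale throughout—choosing $C^*$ small enough that orbits are trapped inward yet accepting only the weak hyperbolicity $\gamma\sim\sqrt\lambda$, and relying on the strict positivity $\varphi_\lambda>C^{-1}\lambda$ as the sole source of drift across the intermediate band.
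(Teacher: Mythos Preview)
Your proof is correct and follows essentially the same approach as the paper: both recognize the saddle-node structure at $x=1$, establish the fiber-derivative bounds $|\partial_x F_{\varphi_\lambda}|\lessgtr 1\mp\gamma$ with $\gamma\sim\sqrt\lambda$ on the bands $\{x\gtrless 1\pm C^*\sqrt\lambda\}$, and then invoke a standard graph-transform argument on a forward-invariant $\sqrt\lambda$-scale annulus to produce $\psi_{att}$ (and symmetrically $\psi_{rep}$). The only cosmetic difference is that the paper first locates the per-fiber fixed points $x_{att,\theta},x_{rep,\theta}$ in a narrow band $[1+C^{\#}\sqrt\lambda,1+C^{\#\#}\sqrt\lambda]$ and applies the graph transform there directly, whereas you build the trapping regions from the drift identity $F_0(x)-x=-(x-1)^2/x$; both routes yield the same invariant sections and estimates.
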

\begin{remark}
Notice that in the context of Proposition \ref{e.varphiconditions} the cylinder between the graphs of the functions $\psi_{rep}$ and $\psi_{att}$ is invariant under the map $F_{\varphi_\lambda}$, and therefore all (positive or negative) iterates of $F_{\varphi_\lambda}$ are well defined for any initial point in that cylinder.
\end{remark}

\begin{proof}[Proof of Proposition \ref{p.graphtransform}.]
For a given $\theta \in \T$, denote $\tilde F_\theta: [1/2, 5]\to \mathbb{R}$, $\tilde F_\theta(x)=2+\varphi_\lambda(\theta, x)-\frac{1}{x}$. The conditions \eqref{e.varphiconditions} imply that there are $C^{\#}>0$ and $C^{\#\#}>0$ independent of $\lambda$ such that $\tilde F_\theta$ has two fixed points, $x_{att, \theta}$ and $x_{rep, \theta}$, such that
$$
1+C^{\#}\sqrt{\lambda}<x_{att, \theta}<1+C^{\#\#}\sqrt{\lambda}
$$
and
$$
1-C^{\#\#}\sqrt{\lambda}<x_{rep, \theta}<1-C^{\#}\sqrt{\lambda}.
$$
Set $C^*=\frac{1}{10}C^{\#}$. Let us show that there exists $\gamma>0$, $\gamma=\gamma(\lambda)$ (in fact, one can take $\gamma(\lambda)=5C^*\sqrt{\lambda}$), such that
$$
\left|\frac{d}{dx}\tilde F_\theta(x)\right|<1-\gamma\ \ \text{for}\ \ x\in [1+C^*\sqrt{\lambda}, 5]
$$
and
$$
\left|\frac{d}{dx}\tilde F_\theta(x)\right|>1+\gamma\ \ \text{for}\ \ x\in [1/2, 1-C^*\sqrt{\lambda}].
$$
Indeed, we have
$$
\frac{d}{dx}\tilde F_\theta(x)=\frac{\partial \varphi_\lambda}{\partial x}(\theta, x)+\frac{1}{x^2},
$$
and if $x\in [1+C^*\sqrt{\lambda}, 5]$, we get:
\begin{align*}
\left| \frac{d}{dx} \tilde F_\theta(x) \right| & \le C \lambda + \frac{1}{(1+C^*\sqrt{\lambda})^2} \\
& < C \lambda + \frac{1}{1+2C^*\sqrt{\lambda}} \\
& < C \lambda + 1 - 4 C^*\sqrt{\lambda} \\
& < 1 - 5 C^*\sqrt{\lambda} \\
& = 1 - \gamma
\end{align*}
if $\lambda$ is sufficiently small.

Similarly, we get that for $x\in[1/2, 1-C^*\sqrt{\lambda}]$, we have
$$
 \left|\frac{d}{dx}\tilde F_\theta(x)\right|\ge 1+5C^*\sqrt{\lambda}=1+\gamma.
$$
Notice that $F_{\varphi_\lambda}$ maps $\T\times [1+C^{\#}\sqrt{\lambda}, 1+C^{\#\#}\sqrt{\lambda}]$ into itself, and contracts along the $x$-coordinate. The standard graph transform technique (see, e.g., \cite[Theorem 3.2]{HPS}) shows that
$$
\bigcap_{n\in \mathbb{N}} F^n_{\varphi_\lambda}\left(\T\times [1+C^{\#}\sqrt{\lambda}, 1+C^{\#\#}\sqrt{\lambda}]\right)
$$
is a smooth invariant curve that can be represented as the graph of a function $\psi_{att} : \T \to [1+C^{\#}\sqrt{\lambda}, 1+C^{\#\#}\sqrt{\lambda}]$. Similarly, one can construct $\psi_{rep}$ as a function with the graph
$$
\bigcap_{n\in \mathbb{N}} F^{-n}_{\varphi_\lambda}\left(\T\times [1-C^{\#\#}\sqrt{\lambda}, 1-C^{\#}\sqrt{\lambda}]\right).
$$
This completes the proof of Proposition \ref{p.graphtransform}.
\end{proof}

\begin{proof}[Proof of Theorem \ref{t.groundstatesforquasiplus}.]
The proof of Theorem \ref{t.groundstatesforquasiplus} is parallel to the proof of Theorem \ref{t.groundstates}.

Suppose $E=E^*+\lambda-a$, where $a\in (0, \lambda)$ is small, $a\ll \lambda$. Consider the maps
$$
F_{\varphi_{\lambda-a}}, F_{\varphi_{-a}}:\T\times [1/2, 5]\to \T\times \mathbb{R}^+
$$
given by
$$
F_{\varphi_{\lambda-a}}(\theta, x)=\left(\theta+\alpha, 2+\varphi_{\lambda-a}(\theta, x)-\frac{1}{x}\right)
$$
and
$$
F_{\varphi_{-a}}(\theta, x)=\left(\theta+\alpha, 2-\varphi_{-a}(\theta, x)-\frac{1}{x}\right),
$$
where
$$
C^{-1}\lambda<\varphi_{\lambda-a}<C\lambda, \|\varphi_{\lambda-a}\|_{C^2}\le C\lambda
$$
and
$$
C^{-1}a<\varphi_{-a}<Ca, \|\varphi_{-a}\|_{C^2}\le Ca.
$$
Consider the cylinder $U_{\lambda, a}\subset \T\times [1/2, 5]$ between the curve $\T\times \{1+C^*\sqrt{\lambda}\}$ and the graph of $\psi_{att}$. The next statement is an analog of Lemma \ref{l.inversesequence}.

\begin{lemma}\label{l.inversecylinder}
Given a small $\lambda>0$, for all sufficiently small $a>0$ and for any $(\theta, x)\in U_{\lambda, a}$, we have $F_{\varphi_{\lambda-a}}^{-1}(\theta, x)\in U_{\lambda, a}$ or $F_{\varphi_{-a}}^{-1}(\theta, x)\in U_{\lambda, a}$ (it is possible that both inclusions hold).
\end{lemma}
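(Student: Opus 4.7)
The plan is to rewrite the conclusion of the lemma as the single inclusion
$$
U_{\lambda,a} \subset F_{\varphi_{\lambda-a}}(U_{\lambda,a}) \cup F_{\varphi_{-a}}(U_{\lambda,a}),
$$
and verify it fiberwise in $\theta$. Both skew products translate by $\alpha$ in the angle and are strictly increasing in $x$ on each fiber (their $x$-derivative equals $\partial_x \varphi_\pm + 1/x^2 > 0$ for small $\lambda$ and $a$). Hence, at any target angle $\theta+\alpha$, each image is an interval in $x$, and it suffices to show that the union of these two intervals contains the full fiber $[1+C^*\sqrt{\lambda},\, \psi_{att}(\theta+\alpha)]$ of $U_{\lambda,a}$.

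First I would analyze $F_{\varphi_{\lambda-a}}(U_{\lambda,a})$. By invariance of $\psi_{att}$ under $F_{\varphi_{\lambda-a}}$, its upper endpoint is exactly $\psi_{att}(\theta+\alpha)$, so this image is anchored at the top of the target fiber. Its lower endpoint equals $2 + \varphi_{\lambda-a}(\theta, 1+C^*\sqrt{\lambda}) - 1/(1+C^*\sqrt{\lambda})$; a second-order Taylor expansion of $1/(1+C^*\sqrt{\lambda})$ combined with $|\varphi_{\lambda-a}| \leq C\lambda$ places this endpoint at $1 + C^*\sqrt{\lambda} + O(\lambda)$. Thus $F_{\varphi_{\lambda-a}}(U_{\lambda,a})$ covers the top portion of the fiber, leaving only a strip of vertical height $O(\lambda)$ near the bottom of $U_{\lambda,a}$ uncovered.

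Next I would analyze $F_{\varphi_{-a}}(U_{\lambda,a})$. The image of the bottom boundary equals $2 - \varphi_{-a}(\theta,1+C^*\sqrt{\lambda}) - 1/(1+C^*\sqrt{\lambda}) = 1 + C^*\sqrt{\lambda} - (C^*)^2\lambda - \varphi_{-a} + O(\lambda^{3/2})$, which lies below $1+C^*\sqrt{\lambda}$ and therefore sits beneath $U_{\lambda,a}$. The image of the top boundary $\psi_{att}(\theta)$ is, using the expansion $2 - 1/y = y - (y-1)^2 + O((y-1)^3)$ at $y = \psi_{att}(\theta) \geq 1 + C^\#\sqrt{\lambda}$, bounded below by $1 + C^\#\sqrt{\lambda} - O(\lambda) - O(a)$. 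Hence $F_{\varphi_{-a}}(U_{\lambda,a})$ covers a portion of the fiber starting below $U_{\lambda,a}$ and reaching up to height approximately $1 + C^\#\sqrt{\lambda}$.

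The union then covers the full target fiber provided the upper endpoint of the second image is at least the lower endpoint of the first, which amounts to
$$
(C^\# - C^*)\sqrt{\lambda} \geq O(\lambda) + O(a).
$$
Since the proof of Proposition~\ref{p.graphtransform} fixes $C^* = \tfrac{1}{10}C^\#$, the left-hand side is of order $\sqrt{\lambda}$, and the inequality holds once $\lambda$ is sufficiently small and $a \ll \lambda$. The main technical point I expect to require care is precisely this overlap estimate: the cylinder $U_{\lambda,a}$ has vertical thickness of order $\sqrt{\lambda}$ whereas both perturbations contribute only $O(\lambda)$ and $O(a)$, and the essential mechanism that makes the argument close — namely, that the base map $x \mapsto 2 - 1/x$ produces a quadratic $O(\lambda)$ shift at distance $O(\sqrt{\lambda})$ from its semi-stable fixed point $1$ — is exactly the quasi-periodic analog of the first inequality in \eqref{e.condlambda} that was used in the proof of Lemma~\ref{l.inversesequence}.
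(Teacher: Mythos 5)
Your proof is correct and follows essentially the same route as the paper: both arguments reduce the lemma to the covering $U_{\lambda,a}\subset F_{\varphi_{\lambda-a}}(U_{\lambda,a})\cup F_{\varphi_{-a}}(U_{\lambda,a})$, use the invariance of $\psi_{att}$ to anchor the first image at the top, and rely on the same scale separation ($\sqrt{\lambda}$ thickness versus $O(\lambda)+O(a)$ perturbations) to obtain the overlap. The only cosmetic difference is that the paper passes through $F_0^{-1}$ on the uncovered strip and invokes continuity in $a$, while you estimate the image of $\psi_{att}$ under $F_{\varphi_{-a}}$ directly.
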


\begin{proof}
Let us consider $F_{\varphi_{\lambda-a}}(U_{\lambda, a})$. It is a cylinder between the graph of $\psi_{att}$ and the curve $F_{\varphi_{\lambda-a}}(\T\times \{1+C^*\sqrt{\lambda}\})$. Notice that the curve $\T\times \{1+C\lambda+C^*\sqrt{\lambda}\}$ must be inside of the cylinder $F_{\varphi_{\lambda-a}}(U_{\lambda, a})$. Therefore, for any $(\theta, x)$ between the graph of $\psi_{att}$ and $\T\times \{1+C\lambda+C^*\sqrt{\lambda}\}$, we have $F^{-1}_{\varphi_{\lambda-a}}\in U_{\lambda, a}$.

Let us now consider a point $(\theta, x)\in \T\times [1+C^*\sqrt{\lambda}, 1+C\lambda+C^*\sqrt{\lambda}]$. Let us show that $F^{-1}_{-a}(\theta, x)\in U_{\lambda, a}$. Indeed, let us check first that $F^{-1}_0(\theta, x)\in U_{\lambda, a}$. We have
$$
F_0^{-1}(\theta, x)=\left(\theta-\alpha, \frac{1}{2-x}\right),
$$
so the $x$-coordinate of the point $F^{-1}_0(\theta, x)$ must be inside of the interval $\left[\frac{1}{1-C\lambda-C^*\sqrt{\lambda}}, \frac{1}{1-C^*\sqrt{\lambda}}\right]$. Notice that
$$
\frac{1}{1-C\lambda-C^*\sqrt{\lambda}}>1+C^*\sqrt{\lambda}
$$
and
$$
\frac{1}{1-C^*\sqrt{\lambda}}<1+2C^*\sqrt{\lambda}<1+C^{\#}\sqrt{\lambda},
$$
and hence
$$
\T\times \left[\frac{1}{1-C\lambda-C^*\sqrt{\lambda}}, \frac{1}{1-C^*\sqrt{\lambda}}\right]\subset U_{\lambda, a}.
$$
By continuity, if $a>0$ is small enough, for any point $(\theta, x)\in \T\times [1+C^*\sqrt{\lambda}, 1+C\lambda+C^*\sqrt{\lambda}]$, we have $F^{-1}_{\varphi_{-a}}(\theta, x)\in U_{\lambda, a}$.
\end{proof}
Let us now denote by $\hat U_{\lambda,a}$ the cylinder between the graph of $\psi_{rep}$ and the curve $\T\times \{1-C^*\sqrt{\lambda}\}$. The next statement is an analog of Lemma \ref{l.forwardsequence}, and the proof is completely parallel:

\begin{lemma}\label{l.forwardcylinder}
Given a small $\lambda>0$, for all sufficiently small $a>0$ and for any $(\theta, x)\in \hat U_{\lambda, a}$, we have $F_{\varphi_{\lambda-a}}(\theta, x)\in \hat U_{\lambda, a}$ or $F_{\varphi_{-a}}(\theta, x)\in \hat U_{\lambda, a}$ (it is possible that both inclusions hold).
\end{lemma}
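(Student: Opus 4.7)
The plan is to mirror the proof of Lemma \ref{l.inversecylinder}, with the roles of forward and backward iterates exchanged and with $\psi_{rep}$ playing the role previously played by $\psi_{att}$. Namely, forward iterates of $F_{\varphi_{\lambda-a}}$ push points of $\hat U_{\lambda,a}$ upward (away from the invariant repelling section $\psi_{rep}$), while forward iterates of $F_{\varphi_{-a}}$ push them downward, so one should be able to cover $\hat U_{\lambda,a}$ by two overlapping subregions: a lower one (close to $\psi_{rep}$) on which $F_{\varphi_{\lambda-a}}$ keeps us inside, and a thin upper sliver (close to $\T \times \{1-C^*\sqrt\lambda\}$) on which $F_{\varphi_{-a}}$ does.

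For the lower subregion, I would work out the pre-image $F_{\varphi_{\lambda-a}}^{-1}(\hat U_{\lambda,a})$. Its lower boundary is $\psi_{rep}$ (invariant under $F_{\varphi_{\lambda-a}}$), and its upper boundary is obtained by solving $2+\varphi_{\lambda-a}(\theta,y)-1/y = 1-C^*\sqrt\lambda$ for $y$. Using $\varphi_{\lambda-a}\ge C^{-1}(\lambda-a)$ together with a first-order Taylor expansion of $1/(1+\epsilon)$, one concludes that this upper boundary lies below $1-C^*\sqrt\lambda-K\lambda$ for a constant $K>0$ independent of $\lambda$ and $a$ (once $a\ll\lambda$ and $\lambda$ is small). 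Hence every $(\theta,x)\in\hat U_{\lambda,a}$ with $x\le 1-C^*\sqrt\lambda-K\lambda$ satisfies $F_{\varphi_{\lambda-a}}(\theta,x)\in\hat U_{\lambda,a}$.

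For the remaining sliver $x\in[1-C^*\sqrt\lambda-K\lambda,\,1-C^*\sqrt\lambda]$, I would verify directly that $F_{\varphi_{-a}}(\theta,x)\in\hat U_{\lambda,a}$. Since $F_0(1-\epsilon)=1-\epsilon-\epsilon^2+O(\epsilon^3)$ with $\epsilon=C^*\sqrt\lambda$, the map $F_0$ already moves the top curve downward by an amount of order $\lambda$, and subtracting the strictly positive $\varphi_{-a}\sim a$ only pushes it further down, so the image stays strictly below $1-C^*\sqrt\lambda$. The substantive check is that the image still lies above $\psi_{rep}(\theta+\alpha)$: because $\psi_{rep}(\theta+\alpha)\le 1-C^{\#}\sqrt\lambda$ while the image is at least $1-C^*\sqrt\lambda-O(\lambda)-O(a)$, the buffer $(C^{\#}-C^*)\sqrt\lambda=9C^*\sqrt\lambda$ built into the choice $C^*=C^{\#}/10$ from Proposition \ref{p.graphtransform} dominates the $O(\lambda)+O(a)$ corrections once $\lambda$ is small and $a\ll\lambda$; monotonicity of $F_{\varphi_{-a}}$ in $x$ then handles the interior of the sliver.

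The main difficulty I expect is the quantitative bookkeeping of the constants $C$, $C^*$, $C^{\#}$: one must ensure $K>0$ in the first step (which requires $C^*$ small enough relative to $C^{-1}$, and is achievable by shrinking $c$ so that $Z$ is close to the identity and $\varphi_{\lambda-a}$ is close to $\lambda-a$), and one must verify that the width-$K\lambda$ sliver is indeed covered by the $F_{\varphi_{-a}}$-estimate of the second step. All of this is a routine chase of constants once they are identified, and no new geometric idea beyond those already present in the proof of Lemma \ref{l.inversecylinder} is required.
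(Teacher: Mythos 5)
Your overall strategy is right, and since the paper only remarks that the proof is ``completely parallel'' to that of Lemma~\ref{l.inversecylinder}, the two-region decomposition you sketch is exactly what is intended. However, Step~1 has a genuine direction error. You compute the upper boundary $g(\theta)$ of $F_{\varphi_{\lambda-a}}^{-1}(\hat U_{\lambda,a})$ from $1/y = 1 + C^*\sqrt\lambda + \varphi_{\lambda-a}(\theta,y)$ and then invoke the \emph{lower} bound $\varphi_{\lambda-a}\ge C^{-1}(\lambda-a)$ to conclude that $g(\theta) < 1-C^*\sqrt\lambda - K\lambda$. That inequality is indeed correct, but it is an \emph{upper} bound on $g(\theta)$, i.e., it shows the preimage is \emph{small}. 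From this you cannot deduce that every $(\theta,x)\in\hat U_{\lambda,a}$ with $x\le 1-C^*\sqrt\lambda-K\lambda$ lies in the preimage: a point with $g(\theta) < x \le 1-C^*\sqrt\lambda-K\lambda$ is in your claimed lower region but is \emph{not} in $F_{\varphi_{\lambda-a}}^{-1}(\hat U_{\lambda,a})$, so $F_{\varphi_{\lambda-a}}(\theta,x)$ escapes above $1-C^*\sqrt\lambda$. What you actually need is the opposite bound: use $\varphi_{\lambda-a}\le C\lambda$ to obtain $g(\theta) \ge 1/(1+C^*\sqrt\lambda+C\lambda) \ge 1-C^*\sqrt\lambda - K\lambda$ with $K\approx C - (C^*)^2$, so that the preimage \emph{contains} the lower region. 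This is also the exact mirror of what the paper does for Lemma~\ref{l.inversecylinder}, where it is the upper bound $\varphi_{\lambda-a}<C\lambda$ that is used to show $\T\times\{1+C\lambda+C^*\sqrt\lambda\}$ lies inside $F_{\varphi_{\lambda-a}}(U_{\lambda,a})$.

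Once you flip the bound, the constant $K$ becomes approximately $C$ rather than something of size $C^{-1}$, and the worry you raise about needing $C^*$ small relative to $C^{-1}$ (and your suggested remedy of shrinking $c$) evaporates: the needed inequality $(C^*)^2 < C$ is automatic since $C>1$ and $C^*<1$, and in any case the relation $C^*=C^{\#}/10$ already makes $(C^*)^2$ small. The rest of your argument, in particular the sliver analysis in Step~2 and the $9C^*\sqrt\lambda$-buffer estimate against $\psi_{rep}$, is correct and works unchanged with the wider sliver of width $\approx C\lambda$, since the buffer is of order $\sqrt\lambda\gg\lambda$.
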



Now, let us state an analog of Lemma \ref{l.transit}:

\begin{lemma}\label{l.transitcyl}
For any sufficiently small $\lambda>0$, $a>0$, and any point $(\theta, x)\in  U_{\lambda, a}$, there exists $k\in \mathbb{N}$ such that $F_{\varphi_{-a}}^k(\theta, x)\in \hat U_{\lambda, a}$.
\end{lemma}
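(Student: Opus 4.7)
The plan is to adapt the argument from Lemma~\ref{l.transit} to the skew-product setting. Since $F_{\varphi_{-a}}$ is a genuinely downward perturbation of $F_0$ (the positive function $\varphi_{-a}$ appears with a minus sign in the formula for $F_{\varphi_{-a}}$), I would like to show that repeated application of $F_{\varphi_{-a}}$ starting at any $(\theta,x) \in U_{\lambda,a}$ strictly lowers the $x$-coordinate through the middle region around $x = 1$, eventually depositing the orbit inside $\hat U_{\lambda,a}$ after finitely many steps.

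First, I would establish the basic monotonicity estimate. A direct computation gives, for every $(\theta,x) \in \T \times (0,5]$,
$$
x - \left( 2 - \varphi_{-a}(\theta,x) - \tfrac{1}{x} \right) = \frac{(x-1)^2}{x} + \varphi_{-a}(\theta,x) \ge \varphi_{-a}(\theta,x) \ge C^{-1} a > 0.
$$
Thus the $x$-coordinate of the forward orbit under $F_{\varphi_{-a}}$ is strictly decreasing, with per-step decrement at least $C^{-1} a$. Starting from $(\theta_0,x_0) \in U_{\lambda,a}$, so that $x_0 \in [1 + C^*\sqrt{\lambda},\,1 + C^{\#\#}\sqrt{\lambda}]$, this already guarantees the existence of a smallest iterate $k_1 \in \mathbb{N}$ with $x_{k_1} < 1 - C^*\sqrt{\lambda}$, while $x_{k_1 - 1} \ge 1 - C^*\sqrt{\lambda}$.

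Second, I would verify that this first crossing lands the orbit inside $\hat U_{\lambda,a}$, i.e., above the graph of $\psi_{rep}$. By monotonicity, $x_{k_1 - 1}$ lies in $[1 - C^*\sqrt{\lambda},\,1 + C^{\#\#}\sqrt{\lambda}]$, so the decrement at step $k_1$ is bounded by
$$
\frac{(x_{k_1 - 1} - 1)^2}{x_{k_1 - 1}} + \varphi_{-a}(\theta_{k_1 - 1}, x_{k_1 - 1}) \le \tilde C (\lambda + a)
$$
for some constant $\tilde C$ independent of $\lambda$ and $a$. Therefore $x_{k_1} \ge (1 - C^*\sqrt{\lambda}) - \tilde C (\lambda + a)$, and since $\psi_{rep}(\theta_{k_1}) \le 1 - C^{\#}\sqrt{\lambda} = 1 - 10 C^*\sqrt{\lambda}$ by Proposition~\ref{p.graphtransform}, we obtain $x_{k_1} > \psi_{rep}(\theta_{k_1})$ as soon as $\tilde C(\lambda + a) < 9 C^*\sqrt{\lambda}$, which holds for all sufficiently small $\lambda$ and $a$. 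Combined with $x_{k_1} < 1 - C^*\sqrt{\lambda}$, this places $(\theta_{k_1}, x_{k_1})$ inside $\hat U_{\lambda,a}$, as required.

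The only genuinely delicate point is the overshoot estimate above: one must ensure that the downward step size near $x = 1$, which is of order $\lambda + a$, is much smaller than the vertical width $\sim \sqrt{\lambda}$ of the cylinder $\hat U_{\lambda,a}$. This is precisely where the scaling regime $a \ll \lambda \ll 1$ is used, so that $\lambda + a \ll \sqrt{\lambda}$. Once that is checked, the argument is the direct skew-product analog of Lemma~\ref{l.transit}, with all estimates uniform in $\theta$ and with no escape from the domain $\T \times [1/2,5]$, since monotonicity and the overshoot bound give $x_k > 1/2$ for all $k \le k_1$ when $\lambda$ is small enough.
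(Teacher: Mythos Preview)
Your proof is correct and supplies the details the paper omits (the paper states the lemma without proof, merely calling it an analog of Lemma~\ref{l.transit}, which itself is declared ``clear''). Your monotonicity computation and the overshoot estimate controlling the step size near $x=1$ by $O(\lambda+a)$ against the $O(\sqrt{\lambda})$ width of $\hat U_{\lambda,a}$ are exactly the points that make the analog work in the skew-product setting, and they are handled cleanly.
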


Finally, for any $(\theta, x) \in U_{\lambda, a}$, we can use Lemma~\ref{l.inversecylinder} to generate a sequence of maps $F_{\varphi_{\lambda-a}}$ and $F_{\varphi_{-a}}$ such that under the application of their inverses, the orbit of $(\theta, x)$ remains in $U_{\lambda, a}$ and the corresponding $\frac{d}{dx}$ derivative increases exponentially, and Lemmas~\ref{l.forwardcylinder} and \ref{l.transitcyl} can be used to construct a forward sequence of iterates with similar properties. This gives a solution that is an eigenfunction that corresponds to the energy $E=E^*+\lambda-a$ and has positive entries (hence is a ground state).
\end{proof}

\begin{proof}[Proof of Theorem \ref{t.main1}.]
Notice that due to Remark \ref{r.redtobernoulli} it is enough to prove Theorem \ref{t.main1} in the case of Bernoulli random potential that takes values $0$ and $\lambda$, in which case it follows from Theorem \ref{t.groundstatesforquasiplus}: the fact that for each energy in the interval in question, there exists a realization of the random potential, for which the energy is an eigenvalue implies that the Schr\"odinger cocycle at this energy is not uniformly hyperbolic. Johnson's theorem then implies that the energy must belong to the almost sure spectrum. Here we used some standard terminology and results for which we refer the reader to \cite{DF21}.
\end{proof}

%
%

\begin{appendix}

\section{A Remark on Unbounded Background Potentials}\label{app.Unbounded}

In this section we expand on a comment made in Remark~\ref{r.problem1.2}. There it was pointed out that if the boundedness of $V_\mathrm{bg}$ is not assumed, it is possible to find a counterexample to the problem posed in Problem~\ref{prob.2}, namely one can find an unbounded background potential $V_\mathrm{bg}$ and a compactly supported non-degenerate $\nu$ such that all the resulting operators $H_\omega$ have empty essential spectrum, and hence it is impossible for the spectrum to contain an interval. In fact, the underlying principle is both simple and purely deterministic:

\begin{prop}\label{p.noessspect}
Suppose $V_\mathrm{bg} : \Z \to \R$ is given by $V_\mathrm{bg}(n) = n$, $n \in \Z$,
and $V_\mathrm{b} : \Z \to \R$ is bounded. Then the essential spectrum of the Schr\"odinger operator $H$ in $\ell^2(\Z)$ with potential $V_\mathrm{bg}+ V_\mathrm{b}$ is empty.
\end{prop}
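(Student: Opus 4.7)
\medskip

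\noindent\textbf{Proof proposal.} The plan is to show that the full potential $V \defeq V_{\mathrm{bg}} + V_{\mathrm{b}}$ satisfies $|V(n)| \to \infty$ as $|n| \to \infty$, and to deduce from this that $H$ has empty essential spectrum via Weyl's criterion. The principle is the classical one that a Schr\"odinger operator whose potential diverges to infinity at both ends has compact resolvent, but we should verify carefully that the argument survives in the present discrete, two-sided setting where $V_{\mathrm{bg}}(n) = n$ goes to $+\infty$ at $+\infty$ and to $-\infty$ at $-\infty$ (so we must take absolute values).

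First, let $C \defeq \|V_{\mathrm{b}}\|_\infty < \infty$. Then for every $n \in \Z$,
$$
|V(n)| = |n + V_{\mathrm{b}}(n)| \ge |n| - C,
$$
so $|V(n)| \to \infty$ as $|n| \to \infty$. Next, I would invoke Weyl's criterion: assume for contradiction that some $E \in \sigma_{\mathrm{ess}}(H)$, and pick a singular Weyl sequence $\{\psi_k\} \subset \ell^2(\Z)$ with $\|\psi_k\| = 1$, $\psi_k \to 0$ weakly, and $\|(H-E)\psi_k\| \to 0$. For each $N \in \N$, the projection $P_N$ onto $\ell^2(\{-N,\ldots,N\})$ is finite rank, so weak convergence of $\psi_k$ to zero forces $\|P_N \psi_k\| \to 0$, hence $\|(I-P_N)\psi_k\|^2 \to 1$ as $k \to \infty$.

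The contradiction comes from estimating $\|V\psi_k\|$ in two incompatible ways. On the one hand, given any $M>0$, pick $N$ so large that $|V(n)| \ge M$ whenever $|n| > N$. Then
$$
\|V\psi_k\|^2 \;=\; \sum_{n \in \Z} |V(n)|^2 |\psi_k(n)|^2 \;\ge\; M^2 \sum_{|n|>N} |\psi_k(n)|^2 \;=\; M^2 \|(I-P_N)\psi_k\|^2,
$$
which tends to $M^2$ as $k\to\infty$. Since $M$ was arbitrary, $\|V\psi_k\| \to \infty$. On the other hand, writing $V\psi_k = (H - E)\psi_k - \Delta\psi_k + E\psi_k$, where $\Delta$ denotes the discrete Laplacian $[\Delta \psi](n) = \psi(n+1)+\psi(n-1)$ (bounded with $\|\Delta\| \le 2$), the triangle inequality gives
$$
\|V\psi_k\| \;\le\; \|(H-E)\psi_k\| + 2 + |E|,
$$
which remains bounded in $k$. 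These two bounds are incompatible, so no such Weyl sequence can exist, and $\sigma_{\mathrm{ess}}(H) = \emptyset$.

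There is no substantive obstacle here; the only delicate point is noticing that although $V_{\mathrm{bg}}$ itself changes sign (going to $-\infty$ on one side), what matters for the Weyl-criterion argument is $|V(n)| \to \infty$, which holds on both sides and is preserved under the bounded perturbation $V_{\mathrm{b}}$. Everything else is a routine diagonal estimate. An alternative, essentially equivalent, route would be to establish directly that $(H-z)^{-1}$ is compact for some $z \notin \sigma(H)$ by approximating it in operator norm by its finite-dimensional truncations $P_N (H-z)^{-1} P_N$; I would favor the Weyl-criterion presentation above because it is shorter and avoids having to verify that $z \in \rho(H)$ exists (which in any case follows easily from self-adjointness on the natural maximal domain).
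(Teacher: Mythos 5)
Your proof is correct, and it takes a genuinely different route from the paper's. You reduce everything to the classical fact that $|V(n)| = |n + V_{\mathrm{b}}(n)| \to \infty$ as $|n| \to \infty$ and then run the standard singular-Weyl-sequence argument: a normalized sequence $\psi_k \rightharpoonup 0$ with $\|(H-E)\psi_k\| \to 0$ would have $\|V\psi_k\|$ simultaneously bounded (since $V\psi_k = (H-E)\psi_k - \Delta\psi_k + E\psi_k$) and divergent (since the mass of $\psi_k$ escapes the region $\{|n|\le N\}$ where $|V|$ could be small). All the steps check out; in particular the use of weak convergence to kill $\|P_N\psi_k\|$ is exactly right, and you correctly note the only delicate point, namely that it is $|V(n)|$, not $V(n)$, that must diverge at both ends.

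The paper instead argues through transfer matrices: it exhibits an invariant expanding cone $K = \{|v_1| > |v_2|\}$ for the one-step transfer matrices $\Pi_n$ once $n$ is large (for $E$ in any fixed bounded interval), deduces via Shnol's theorem that far-out half-line restrictions have no spectrum in that interval, and then propagates this to the full-line operator by rank-one perturbation, coefficient stripping, and Weyl's theorem on finite-rank perturbations. Your argument is shorter, more elementary, and applies verbatim whenever $|V_{\mathrm{bg}}(n)| \to \infty$ with $V_{\mathrm{b}}$ bounded; the paper's version is more hands-on and stays within the transfer-matrix framework used throughout the rest of the paper, which is presumably why the authors chose it, and it incidentally produces quantitative solution growth estimates rather than just the qualitative conclusion. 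Both are valid proofs of the stated proposition.
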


\begin{proof}
Consider the transfer matrices
$$
\Pi_n=\begin{pmatrix}
        E-n-V_\mathrm{b}(n) & -1 \\
        1 & 0 \\
      \end{pmatrix}
$$
and the cone in $\mathbb{R}^2$ given by
$$
K=\left\{\bar v=(v_1, v_2)\ |\ |v_1|>|v_2|\right\}.
$$

Set $M := \|V_\mathrm{b}\|_\infty$ and fix a bounded open interval $I\subset \mathbb{R}$. Then for any $E\in I$, all sufficiently large $n$, for any $\bar v\in K$ we have
\begin{align*}
|(E-n-V_\mathrm{b}(n))v_1-v_2 |  & \ge |n-E-M| \cdot |v_1|-|v_2| \\
& \ge |n-E-M| \cdot |v_1| - |v_1| \\
& \ge  (n-E-M-1)|v_1|>|v_1|,
\end{align*}
Thus, since
$$
\Pi_n\bar v = \begin{bmatrix}(E-n-V_\mathrm{b}(n))v_1-v_2 \\ v_1 \end{bmatrix},
$$
$\Pi_n$ sends the cone $K$ to itself. Moreover, $\Pi_n$ expands the vectors in $K$. Indeed,
\begin{align*}
|(E-n-V_\mathrm{b}(n))v_1-v_2 | + |v_1| & \ge (n-E-M)|v_1| \\
& > \frac{1}{2}(n-E-M)(|v_1|+|v_2|).
\end{align*}

It follows that, for sufficiently large $N$ (with a largeness condition that depends on $M$ and $I$), the restriction of $H$ to the half line $[N,+\infty)$ with a Dirichlet boundary condition at $N$ has no spectrum on the open interval $I$, as the solution obeying the boundary condition is not a generalized eigenfunction for any $E \in I$.\footnote{Here we used one of the well-known aspects of Shnol's theorem: the spectrum is given by the closure of the set of energies for which there exists a non-trivial polynomially bounded solution satisfying the boundary condition. Since we are dealing with an unbounded potential, let us mention that Shnol's theorem holds in this setting as well \cite{H19}.}

This in turn shows that the restriction of the operator $H$ to the half-line $[0, +\infty)$ (with any self-adjoint boundary condition at zero) has no essential spectrum on the interval $I$.\footnote{This also follows quickly from known results: first, the variation of the boundary condition at zero falls within rank-one perturbation theory \cite{S95} and invariance of the essential spectrum is clear; second, the change of the left endpoint of the half-line can be investigated via the standard coefficient stripping technique \cite{S11} and invariance of the essential spectrum is then again clear; third, the variation of the potential on the finite inserted piece leaves the essential spectrum invariant due to Weyl's theorem.}

Similar arguments show that the half-line operator obtained by restriction of $H$ to $(-\infty, 0]$ (again with any self-adjoint boundary condition at zero) has no essential spectrum on the interval $I$.

Combining the two statements, it then follows that $H$ itself has no essential spectrum on the interval $I$. Since the choice of the bounded open interval $I$ was arbitrary, Proposition~\ref{p.noessspect} follows.
\end{proof}

\section{The Almost Sure (Essential) Spectrum}\label{sec.almostsurespectrum}

In this section we discuss non-randomness aspects of spectra that are well known in the ergodic setting. However, since we are interested in non-stationary random potentials in this paper, we need extensions of these results. It will turn out that Kolmogorov's zero-one law can serve as a substitute, leading to the non-randomness of the \emph{essential} spectrum. This will be explained in Subsection~\ref{ssec.kolmogorov} below. In Subsection~\ref{ssec.kotsupp} we then derive an extension of Kotani's support theorem from the ergodic setting to the non-stationary case. Along the way we also explain why under suitable additional assumptions, one does have a non-random spectrum, as formulated in Theorem~\ref{t.almostsurespectrum} in the Introduction. Before presenting the proofs of these results in Subsections~\ref{ssec.kolmogorov} and \ref{ssec.kotsupp} we recall a useful characterization of the essential spectrum of a deterministic Schr\"odinger operator in terms of transfer matrix behavior in Subsection~\ref{ssec.essspechcar}, as this tool will be used in those proofs.

\subsection{A Characterization of the Essential Spectrum}\label{ssec.essspechcar}

The following characterization of the essential spectrum in terms of transfer matrices can be extracted from the denseness of generalized eigenvalues (energies for which there are polynomially bounded solutions of the Schr\"odinger equation) and the classical Weyl criterion; see \cite[Proposition B.2]{GK}.

\begin{prop}\label{p.crit}
Let $V:\mathbb{Z}\to \mathbb{R}$ be a bounded potential of the discrete Schr\"odinger operator $H$ acting on $\ell^2(\Z)$ via
\begin{equation}
[H u](n) = u(n+1) + u(n-1) + V(n) u(n).
\end{equation}
Then
%
energy $E\in \mathbb{R}$ belongs to the essential spectrum of the operator $H$ if and only if there exists $K>0$ such that for any $N\in \mathbb{N}$ there is  a sequence $\{m_j\}_{j\in \mathbb{N}}, m_j\in \mathbb{Z},$ with $|m_j-m_{j'}|>2N$ if $j\ne j'$, and unit vectors $\bar u_j$, $|\bar u_j|=1$,  such that $|T_{[m_j, m_j+i], E}\,\bar u_j|\le K$ for all $|i|\le N$ and all $j\in \mathbb{N}$,  where $T_{[m, m+i], E}$ is the product of transfer matrices given by
$$
T_{[m, m+i], E}=\left\{
               \begin{array}{lll}
                 \Pi_{m+i-1, E}\ldots \Pi_{m, E}, & \hbox{if $i> 0$;} \\
                 \text{\rm Id},  & \hbox{if $i= 0$;} \\
                 \Pi_{m+i, E}^{-1}\ldots \Pi_{m-1, E}^{-1}, & \hbox{if $i<0$,}
               \end{array}
             \right.
$$
and $\Pi_{n, E}=\left(
               \begin{array}{cc}
                 E-V(n) & -1 \\
                 1 & 0 \\
               \end{array}
             \right)$.

\end{prop}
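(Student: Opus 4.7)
The plan: This proposition is a two-sided characterization of the essential spectrum of $H$ in terms of bounded orbits of the transfer-matrix cocycle. I will prove both implications by combining Weyl's criterion for the essential spectrum with a discrete Duhamel-type propagation identity for transfer matrices.

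For the forward direction, I would begin from Weyl's criterion: $E \in \sigma_\ess(H)$ supplies an orthonormal sequence $\{\psi_k\}$ in $\ell^2(\Z)$ with $\|(H-E)\psi_k\| \to 0$. By density of finitely supported vectors and a sharp-cutoff approximation (which costs only a small additional $(H-E)$-error), I may assume each $\psi_k$ has finite support; since $\psi_k \to 0$ weakly, these supports can be sparsified, after passing to a subsequence, to be mutually disjoint and separated by any prescribed distance. Fix $N$ and extract $\{\psi_{k,N}\}$ with supports that are pairwise $2N$-separated and $\|(H-E)\psi_{k,N}\|$ as small as needed. From each such $\psi_{k,N}$, select a point $m_{k,N}$ at which the transfer vector $v(n):=(\psi_{k,N}(n+1),\psi_{k,N}(n))^T$ is largest, and let $\bar u_{k,N} = v(m_{k,N})/|v(m_{k,N})|$. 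The key identity
\[
v(n) = \Pi_{n,E}\, v(n-1) + \bigl((H\psi_{k,N} - E\psi_{k,N})(n),\, 0\bigr)^T
\]
iterates to give
\[
T_{[m_{k,N}, m_{k,N}+i], E}\, v(m_{k,N}-1) = v(m_{k,N}+i-1) + \mathrm{err}(i),
\]
where $|\mathrm{err}(i)|$ is controlled by $\|(H-E)\psi_{k,N}\|$ times the supremum of transfer-matrix norms on the window. Choosing $\|(H-E)\psi_{k,N}\|$ small enough relative to $N$ and $E$ yields a uniform bound $|T_{[m_{k,N},m_{k,N}+i],E}\,\bar u_{k,N}| \le K$ for $|i|\le N$, with $K$ depending only on $\|V\|_\infty$ and $E$.

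For the reverse direction, given $(m_j,\bar u_j)$ satisfying the transfer-matrix bound, define $\psi_{j,N}(m_j+i)$ for $|i|\le N$ to be the appropriate component of $T_{[m_j,m_j+i],E}\,\bar u_j$ and extend by zero outside $[m_j-N,m_j+N]$. Then $\psi_{j,N}$ solves the Schr\"odinger equation on the interior of this window, so $(H-E)\psi_{j,N}$ is supported only near the two endpoints with values bounded by $C K$ (depending on $\|V\|_\infty$ and $|E|$), giving $\|(H-E)\psi_{j,N}\| \le 2CK$. The $2N$-separation of the $m_j$'s renders $\{\psi_{j,N}\}_j$ mutually orthogonal. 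The fact that the same $K$ works for every $N$ rules out uniform hyperbolicity of the Schr\"odinger cocycle at $E$: if the cocycle were uniformly hyperbolic on a sufficiently long window, then any unit vector $\bar u_j$ would develop exponential growth in at least one of the two time directions, eventually violating the bound $K$. Hence the cocycle is essentially non-hyperbolic on arbitrarily long windows, which forces $|T_{[m_j,m_j+i],E}\,\bar u_j|$ to not decay to zero on average, so $\|\psi_{j,N}\|^2 \gtrsim N$. Consequently $\|(H-E)\psi_{j,N}\|/\|\psi_{j,N}\| \lesssim K/\sqrt{N} \to 0$, and a diagonal extraction $(k=k(N),\, N\to\infty)$ yields a singular Weyl sequence for $E$, placing $E$ in $\sigma_\ess(H)$.

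The main obstacle is the $\ell^2$-lower bound $\|\psi_{j,N}\|^2 \gtrsim N$ in the reverse direction. Without it, the construction only produces approximate eigenfunctions with $\|(H-E)\psi\|/\|\psi\|$ bounded but not small, which would only show that $\sigma_\ess(H)$ intersects some bounded interval around $E$ rather than that $E$ itself lies in $\sigma_\ess(H)$. The resolution is dynamical: the $\SL(2,\R)$ structure of the transfer-matrix cocycle (namely $\det\Pi_{n,E}=1$) means that a stable direction for forward iteration is an unstable direction backward, so the simultaneous bound $|T_{[m_j,m_j+i],E}\,\bar u_j|\le K$ for all $|i|\le N$, uniform in $N$, precludes any persistent stable-unstable splitting along $\bar u_j$ and forces essentially elliptic behavior on the window, which produces the required linear $\ell^2$-growth.
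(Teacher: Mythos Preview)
The paper does not actually prove this proposition: it cites \cite[Proposition~B.2]{GK} and remarks only that the statement ``can be extracted from the denseness of generalized eigenvalues \ldots\ and the classical Weyl criterion.'' Your approach via Weyl sequences and a Duhamel-type identity is therefore more explicit than anything in the paper, and your forward direction is essentially correct: picking the site where $|v(n)|$ is maximal and normalizing there gives $|T_{[m,m+i],E}\bar u| \le 1 + \text{(error)}$, with the error made small by choosing $\|(H-E)\psi_{k,N}\|$ small enough, so $K$ is indeed uniform in $N$.

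The reverse direction, however, has a genuine gap at exactly the point you flag. The lower bound $\|\psi_{j,N}\|^2 \gtrsim N$ does \emph{not} follow from the two-sided boundedness $|T_{[m_j,m_j+i],E}\bar u_j|\le K$ alone: if the orbit happens to agree with an exponentially localized solution centered near $m_j$, then $\sum_{|i|\le N}|T_{[m_j,m_j+i],E}\bar u_j|^2$ stays bounded as $N\to\infty$. Your $\SL(2,\R)$ argument correctly shows that $\bar u_j$ cannot lie in a persistent stable \emph{or} unstable direction of a uniformly hyperbolic splitting, but ``no uniform hyperbolicity on the window'' is far weaker than ``essentially elliptic,'' and it does not force linear $\ell^2$-growth of a single orbit. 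Since the $m_j$'s and $\bar u_j$'s are allowed to change with $N$, you cannot even fall back on the observation that an $\ell^2$ orbit has vanishing boundary values.

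There is a simple fix that bypasses the lower bound entirely: replace the sharp truncation by a smooth one. Set $\tilde\psi_{j,N}(m_j+i)=\chi(i/N)\,\psi_{j,N}(m_j+i)$ with $\chi\in C^\infty_c(-1,1)$, $\chi\equiv 1$ on $[-\tfrac12,\tfrac12]$. Since $\psi_{j,N}$ solves the difference equation exactly on the window, a discrete Leibniz computation gives
\[
\big|(H-E)\tilde\psi_{j,N}(m_j+i)\big|\le \frac{C}{N}\big(|\psi_{j,N}(m_j+i+1)|+|\psi_{j,N}(m_j+i-1)|\big)\le \frac{CK}{N},
\]
so $\|(H-E)\tilde\psi_{j,N}\|\lesssim K/\sqrt{N}$, while $\|\tilde\psi_{j,N}\|^2\ge |\psi_{j,N}(m_j)|^2+|\psi_{j,N}(m_j-1)|^2\cdot\chi(-1/N)^2\ge \tfrac12$ for large $N$, using only $|\bar u_j|=1$. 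A diagonal extraction over $N$ (choosing $j=j(N)$ so that the supports are disjoint) then yields a singular Weyl sequence, and $E\in\sigma_\ess(H)$.
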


\subsection{Existence of the Almost Sure (Essential) Spectrum}\label{ssec.kolmogorov}



Recall that Theorem \ref{t.almostsurespectrum} asserts the existence of a non-random spectrum under the assumption that the background potential is generated by continuous sampling along the orbit of a minimal homeomorphism.

\begin{proof}[Proof of Theorem \ref{t.almostsurespectrum}]
Denote by $G:(\text{supp}\,\nu)^{\mathbb{Z}}\to (\text{supp}\,\nu)^{\mathbb{Z}}$ the left shift on the space of sequences $\omega\in (\text{supp}\,\nu)^{\mathbb{Z}}$, and consider the map
$$
T\times G:X\times (\text{supp}\,\nu)^{\mathbb{Z}}\to X\times (\text{supp}\,\nu)^{\mathbb{Z}}.
$$

We have the following statement:

\begin{lemma}\label{l.minmix}
For every $x\in X$ and $\nu^\mathbb{Z}$-almost every $\omega$, the $(T\times G)$-orbit of $(x, \omega)$ is dense in $X\times (\text{supp}\,\nu)^{\mathbb{Z}}$.
\end{lemma}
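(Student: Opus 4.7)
The plan is to fix $x \in X$, choose a countable basis of ``good'' open sets in the product space $X \times (\mathrm{supp}\,\nu)^{\Z}$ of the form $V_j \times C_k$, and show via the second Borel--Cantelli lemma that for each such basis element, the $(T\times G)$-orbit of $(x,\omega)$ visits it for $\mu$-a.e.\ $\omega$. Intersecting the resulting countable family of full-measure sets then yields the lemma.

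First I would construct the basis. Take a countable basis $\{V_j\}_{j\in\N}$ for the topology of $X$, and a countable basis $\{B_i\}_{i\in\N}$ for the topology of $\mathrm{supp}\,\nu$. Every nonempty open subset of $\mathrm{supp}\,\nu$ has positive $\nu$-measure by definition of the topological support, so every basic cylinder of the form
\[
C = \{\omega' \in (\mathrm{supp}\,\nu)^{\Z} : \omega'_i \in B_{\ell(i)} \text{ for } -N \le i \le N\}
\]
has positive $\mu$-measure. The countable collection of all such cylinders, paired with the $V_j$, forms a countable basis for the topology of $X \times (\mathrm{supp}\,\nu)^{\Z}$.

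Next, fix one basic open set $U = V_j \times C_k$ and let $N$ be the largest coordinate on which $C_k$ depends. By minimality of $T$, the forward orbit $\{T^n x\}_{n \ge 0}$ is dense in $X$, so there are infinitely many $n$ with $T^n x \in V_j$, and I may extract a subsequence $n_1 < n_2 < \cdots$ with $T^{n_\ell} x \in V_j$ and $n_{\ell+1} - n_\ell > 2N+1$. The events $\{G^{n_\ell}\omega \in C_k\}_{\ell \in \N}$ then depend on pairwise disjoint blocks of coordinates of $\omega$ and are therefore independent under $\mu = \nu^{\Z}$, each with the same positive probability. The second Borel--Cantelli lemma yields a full-$\mu$-measure set $\Omega_{j,k}$ on which infinitely many of these events occur; in particular, for every $\omega \in \Omega_{j,k}$ the $(T\times G)$-orbit of $(x,\omega)$ enters $U$.

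Setting $\Omega^* := \bigcap_{j,k} \Omega_{j,k}$ gives a full-$\mu$-measure set, and for any $\omega \in \Omega^*$ the orbit $\{(T^n x, G^n \omega)\}_{n \in \Z}$ meets every element of the chosen basis and is hence dense in $X \times (\mathrm{supp}\,\nu)^{\Z}$. The only mildly delicate point in this argument is arranging well-separated return times to $V_j$ (so that the shift events are independent), but this follows immediately from minimality by thinning out the recurrence times, so I do not expect a serious obstacle.
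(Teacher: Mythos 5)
Your proof is correct and follows essentially the same approach as the paper: use minimality of $T$ to get infinitely many well-separated return times to a neighborhood in $X$, observe that the corresponding cylinder events in $(\mathrm{supp}\,\nu)^{\Z}$ are independent with probability bounded away from zero, apply the second Borel--Cantelli lemma, and intersect over a countable basis. The only difference is cosmetic: you organize the argument around a countable basis of product open sets and an explicit full-measure intersection, whereas the paper fixes an arbitrary target point $(y,\omega')$ and scale $\varepsilon$ and leaves the countable intersection implicit.
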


\begin{proof}
Fix any small $\varepsilon>0$, and any point $(y, \omega')\in X\times (\text{supp}\, \nu)^\mathbb{Z}$. Due to the minimality of $T:X\to X$, there exists a sequence $m_j\in \mathbb{N}$, $m_j\to \infty$ as $j\to \infty$, such that $\text{dist}_X(T^{m_j}(x), y)<\varepsilon$. For any $j\in \mathbb{N}$, the probability of the event
$$
\left\{|\tilde \omega_{m_j+i}-\omega'_{m_j+i}|<\varepsilon\ \text{for any}\ |i|<\frac{1}{\varepsilon}, \ \text{where}\ (T\times G)^{m_j}(x, \omega)=(T^{m_j}(x), \tilde \omega)\right\}
$$
is bounded away from zero, and if $|m_j-m_{j'}|>\frac{2}{\varepsilon}$, then those events are independent. Hence with probability one infinitely many of them must happen. Since $\varepsilon>0$ could be chosen arbitrarily small, Lemma \ref{l.minmix} follows.
\end{proof}
Since for any two potentials defined by initial conditions that have dense orbits, the spectra of the corresponding operators coincide, Theorem \ref{t.almostsurespectrum} follows.
\end{proof}

In cases where the background potential is not of the form considered in Theorem \ref{t.almostsurespectrum}, we have the following substitute result. We are grateful to Victor Kleptsyn for communicating it to us.

\begin{theorem}\label{t.essspect}
Suppose $\{\nu_n\}_{n\in \mathbb{Z}}$ is a family of probability distributions on $\mathbb{R}$ with uniformly bounded supports. Let $V:\mathbb{Z}\to \mathbb{R}$ be a random potential chosen (independently at each site) with respect to the measure $\mu=\prod_{n \in \Z} \nu_n$. Then there exists a (non-random) compact set $\Sigma\subset \mathbb{R}$ such that $\mu$-almost surely, the essential spectrum of the discrete Schr\"odinger operator with potential $V$ is equal to $\Sigma$.
\end{theorem}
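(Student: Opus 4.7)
The plan is to obtain Theorem \ref{t.essspect} as a direct application of Kolmogorov's zero--one law, where the tail-event structure is supplied by Weyl's theorem on stability of the essential spectrum under compact perturbations. Concretely, if two potentials $V$ and $V'$ agree outside a finite set, then $H_V - H_{V'}$ is of finite rank and hence compact, so $\sigma_\ess(H_V) = \sigma_\ess(H_{V'})$. It follows that for every Borel set $B \subseteq \R$, the event
$$
\mathcal{E}_B := \{\omega : \sigma_\ess(H_\omega) \cap B \neq \emptyset\}
$$
is invariant under changes of $V(n)$ at any finite collection of sites, hence belongs to the tail $\sigma$-algebra $\bigcap_{N} \sigma(V(n) : |n| \geq N)$ of the independent family $\{V(n)\}_{n \in \Z}$. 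Kolmogorov's zero--one law then yields $\mu(\mathcal{E}_B) \in \{0, 1\}$.

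Next I would fix a countable basis $\CB$ of open intervals with rational endpoints for the topology of $\R$, partition $\CB = \CB_0 \sqcup \CB_1$ according to whether $\mu(\mathcal{E}_B) = 0$ or $1$, and define
$$
\Sigma := \R \setminus \bigcup_{B \in \CB_0} B.
$$
This set is closed, and bounded because the uniformly bounded supports $\mathrm{supp}\,\nu_n \subseteq [-M, M]$ confine every realization of $\sigma(H_\omega)$ to $[-M-2, M+2]$, so $\Sigma$ is compact.

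The last step is to verify $\sigma_\ess(H_\omega) = \Sigma$ on a full-measure set, obtained by removing the $\mu$-null set $\bigcup_{B \in \CB_0} \mathcal{E}_B \cup \bigcup_{B \in \CB_1} (\Omega \setminus \mathcal{E}_B)$ (a countable union of null sets). On this set, the inclusion $\sigma_\ess(H_\omega) \subseteq \Sigma$ is immediate, since $\sigma_\ess(H_\omega) \cap B = \emptyset$ for each $B \in \CB_0$. For the reverse inclusion, if some $E \in \Sigma$ were absent from $\sigma_\ess(H_\omega)$, closedness of the essential spectrum would produce a basis element $B \in \CB$ with $E \in B$ and $B \cap \sigma_\ess(H_\omega) = \emptyset$; this forces $B \in \CB_0$ (else $\omega$ would be atypical), whence $E \in B \subseteq \R \setminus \Sigma$, a contradiction.

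The only nontrivial technical point I expect to have to work out carefully is the measurability of $\mathcal{E}_B$ with respect to the product Borel structure on $\Omega = \prod_n \mathrm{supp}\,\nu_n$. This should follow from standard measurability results for measurable families of bounded self-adjoint operators---for instance, $\mathbbm{1}_B(H_\omega)$ depends measurably on $\omega$, so $\{\dim\mathrm{range}\,\mathbbm{1}_B(H_\omega) = \infty\} = \mathcal{E}_B$ is measurable---but in the present one-dimensional setting one can also verify it by hand using the transfer matrix characterization of Proposition \ref{p.crit}, since every ingredient of that criterion is jointly Borel measurable in $(\omega, E)$.
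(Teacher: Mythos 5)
Your proof is correct and complete, and it departs from the paper's argument at exactly one, but a meaningful, point. The paper obtains the tail-event structure by invoking its transfer-matrix characterization of the essential spectrum (Proposition~\ref{p.crit}): the criterion there involves only the behavior of the potential along an infinite sparse family of windows, so discarding any that meet a fixed finite set of sites shows that membership of an energy in $\sigma_\ess(H_\omega)$ is unaffected by finite modifications. You instead appeal to Weyl's theorem: changing the potential at finitely many sites changes $H_\omega$ by a finite-rank (hence compact) operator, so the essential spectrum is unchanged. Both routes land at the same place --- $\mathcal{E}_B$ lies in the tail $\sigma$-algebra, Kolmogorov's zero--one law applies, and a countable base of intervals upgrades the family of zero--one statements to an almost-sure identification of $\sigma_\ess(H_\omega)$ with a fixed compact set. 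Your route is more elementary and more portable: it does not use the one-dimensional transfer-matrix machinery at all, so it would work verbatim for discrete Schr\"odinger operators in any dimension (and, with ``relatively compact'' in place of ``finite rank,'' in the continuum), whereas Proposition~\ref{p.crit} is a $\Z$-specific tool. What the paper's choice buys is economy of means: Proposition~\ref{p.crit} is already needed for the support theorem (Theorem~\ref{t.supportthm}), so reusing it keeps the appendix self-contained around that one criterion, and it sidesteps the measurability point you flag since the transfer-matrix condition is manifestly jointly Borel in $(\omega,E)$. Your handling of the remaining steps --- the partition $\CB = \CB_0 \sqcup \CB_1$, the definition $\Sigma = \R \setminus \bigcup_{B\in\CB_0} B$, the two inclusions on the full-measure set, and the resolution of measurability either via $\mathbbm{1}_B(H_\omega)$ or via Proposition~\ref{p.crit} --- is more explicit than the paper's compressed final sentence, and it is all correct.
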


\begin{proof}
Notice that Proposition \ref{p.crit} implies that for a given point $E_0\in \mathbb{R}$, the event ``the energy $E_0$ belongs to the essential spectrum of the Schr\"odinger operator with the random potential $V$'' is a ``tail event'', and hence due to Kolmogorov's zero-one law must have probability either zero or one. Similarly, for any given closed interval $I$, the event ``$I$ has non-empty intersection with the essential spectrum'' is a ``tail event'', and has probability either zero or one. Since there is a countable base of the topology of $\mathbb{R}$ consisting of intervals, and a countable intersection of tail events is a tail event, Theorem~\ref{t.essspect} follows.
\end{proof}

\subsection{Monotonicity: The Support Theorem}\label{ssec.kotsupp}

Here we formulate and prove a generalization of Kotani's support theorem, originally proved in the continuum ergodic setting in \cite{K85}:

\begin{theorem}\label{t.supportthm}
Let $\nu_1$ and $\nu_2$ be two probability distributions on $\mathbb{R}$ with bounded support.  Let $V_{\mathrm{bg}}:\mathbb{Z}\to \mathbb{R}$ be a bounded background potential. Denote by $\Sigma_1$ the almost sure essential spectrum of the discrete Schr\"odinger operator given by the random potential $V_{\mathrm{bg}}+V^\mathrm{AM}_{\omega}$, where $V^\mathrm{AM}_\omega$ is a random sequence generated with respect to the distribution $\nu_1$ at each site. Define $\Sigma_2$ similarly, using the distribution $\nu_2$. If $\mathrm{supp} \, \nu_1 \subseteq \mathrm{supp}\, \nu_2$, then $\Sigma_1 \subseteq \Sigma_2$.
\end{theorem}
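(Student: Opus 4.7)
The strategy is to exploit the transfer-matrix characterization of the essential spectrum furnished by Proposition~\ref{p.crit}, transporting a witnessing family of bounded-transfer-matrix windows from a typical $\nu_1$-realization to a typical $\nu_2$-realization via a Borel--Cantelli argument. Assume $\Sigma_1 \neq \emptyset$ and fix $E \in \Sigma_1$. Using the deterministic structure of $\Sigma_1$ guaranteed by Theorem~\ref{t.essspect}, pick one realization $\omega^{(1)} \in (\mathrm{supp}\,\nu_1)^{\Z}$ from the full-measure set for which $\sigma_{\mathrm{ess}}(H_{V_{\mathrm{bg}} + V^{\mathrm{AM}}_{\omega^{(1)}}}) = \Sigma_1$. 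By Proposition~\ref{p.crit} applied at this energy and this realization, there exists $K > 0$ such that for every $N \in \N$ we obtain centers $\{m_j^{(N)}\}_{j \in \N}$ with $|m_j^{(N)} - m_{j'}^{(N)}| > 2N$ and unit vectors $\bar u_j^{(N)}$ for which the transfer matrix products formed from the potential $V_{\mathrm{bg}} + V^{\mathrm{AM}}_{\omega^{(1)}}$ satisfy $|T_{[m_j^{(N)}, m_j^{(N)} + i], E}\, \bar u_j^{(N)}| \leq K$ for all $|i| \leq N$ and all $j$. The crucial observation is that the potential values inside each window are $V_{\mathrm{bg}}(\cdot) + \omega^{(1)}_{\cdot}$ with $\omega^{(1)}_{\cdot} \in \mathrm{supp}\,\nu_1 \subseteq \mathrm{supp}\,\nu_2$, so they can be approximated by $\nu_2$-samples.

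For each $N$, choose $\varepsilon_N > 0$ small enough that any perturbation of the potential by at most $\varepsilon_N$ at each of the $2N+1$ sites of a window alters the partial transfer-matrix products applied to the corresponding $\bar u_j^{(N)}$ by less than $1$ in norm, uniformly over $|i| \leq N$; such an $\varepsilon_N$ exists because on the compact set containing the relevant potential values, finite products of transfer matrices are Lipschitz in their coefficients. Since each target value $\omega^{(1)}_{m_j^{(N)} + i}$ belongs to $\mathrm{supp}\,\nu_2$, the probability
$$
q_j^{(N)} := \mu_2\!\left(\bigl|V^{\mathrm{AM}}_{\omega^{(2)}}(m_j^{(N)} + i) - \omega^{(1)}_{m_j^{(N)} + i}\bigr| < \varepsilon_N \text{ for all } |i| \leq N\right)
$$
is a product of $2N+1$ strictly positive factors, and compactness of $\mathrm{supp}\,\nu_1$ yields a $j$-uniform lower bound $q_j^{(N)} \geq q^{(N)} > 0$. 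Because the windows are pairwise disjoint, these matching events are $\mu_2$-independent, so the second Borel--Cantelli lemma produces $\mu_2$-almost surely infinitely many successful windows for each fixed $N$. Intersecting over the countable family $N \in \N$, the perturbed partial products applied to $\bar u_j^{(N)}$ stay bounded by $K + 1$ throughout each successful window, so Proposition~\ref{p.crit} (applied to the $\nu_2$-potential with constant $K + 1$) gives $E \in \sigma_{\mathrm{ess}}(H_{V_{\mathrm{bg}} + V^{\mathrm{AM}}_{\omega^{(2)}}})$ $\mu_2$-almost surely, whence $E \in \Sigma_2$.

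The main obstacle is the continuity bound underlying the choice of $\varepsilon_N$: a priori, a product of $2N+1$ matrices in $\mathrm{SL}(2,\R)$ can amplify small per-site perturbations exponentially in $N$, which would force $\varepsilon_N$ to shrink superexponentially and threaten the positivity of $q^{(N)}$ for large $N$. The saving grace is that we do not need the full perturbed matrix product to be close to the unperturbed one in operator norm; we only need each partial product applied to the specific vector $\bar u_j^{(N)}$ to stay in the ball of radius $K + 1$, and the unperturbed partial products applied to $\bar u_j^{(N)}$ already lie in the ball of radius $K$. A telescoping identity that writes the difference of two partial products of length up to $N$ as a sum of at most $N$ one-site perturbations flanked by matrices of norm $O(K)$ then gives a perturbation error polynomial in $N$ with a constant depending only on $K$ and the uniform potential bound. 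Since we only need $q^{(N)} > 0$ for each fixed $N$ separately (not summability in $N$), an $\varepsilon_N$ of polynomial decay is amply sufficient to close the argument.
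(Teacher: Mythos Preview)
Your proof is correct and follows essentially the same route as the paper: extract bounded-transfer-matrix windows from a $\nu_1$-realization via Proposition~\ref{p.crit}, approximate them by $\nu_2$-samples using $\mathrm{supp}\,\nu_1 \subseteq \mathrm{supp}\,\nu_2$ and continuity, and invoke the second Borel--Cantelli lemma across the disjoint windows. The final paragraph is unnecessary and its telescoping claim---that both flanking factors have norm $O(K)$---is not correct as stated (only $T_{[m_j, m_j+k],E}\,\bar u_j$ is controlled, not the operator norms of the partial products), but since you only need $\varepsilon_N > 0$ for each \emph{fixed} $N$, which you already secure via uniform continuity on compact potential ranges, the rate of decay of $\varepsilon_N$ is irrelevant and the argument stands without it.
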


\begin{proof}
Suppose that $E_0\in \Sigma_1$. Then, due to Proposition \ref{p.crit} there exists $K>0$ such that for any $N\in \mathbb{N}$, there are a sequence $\{m_j\}_{j\in \mathbb{N}}, m_j\in \mathbb{Z},$ with $|m_j-m_{j'}|>2N$ if $j\ne j'$, unit vectors $\bar u_j$, $|\bar u_j|=1$, and $(2N+1)$-tuples $\{t^1_{-N, j}, \ldots, t^1_{0, j}, \ldots, t^1_{N, j}\}$ of real numbers, $t^1_{i,j}\in \text{supp}\, \nu_1$, such that $|T_{[m_j, m_j+i], E}\,\bar u_j|\le K$ for all $|i|\le N$ and all $j\in \mathbb{N}$, where $T_{[m_j, m_j+i]}$ are the products of transfer matrices
$$
\Pi_{m_j+i, E_0}=\left(
               \begin{array}{cc}
                 E_0-(V_{\mathrm{bg}}(m_j+i)+ t^1_{i,j}) & -1 \\
                 1 & 0 \\
               \end{array}
             \right), \ \ |i|\le N.
$$
By continuity it follows that there exists $\varepsilon_{N}>0$ such that for any $2N+1$-tuple $\{\tilde t_{-N, j}, \ldots, \tilde t_{0, j}, \ldots, \tilde t_{N, j}\}$ with $|t^1_{i,j}-\tilde t_{i,j}|<\varepsilon_N$ we have $|\tilde T_{[m_j, m_j+i], E}\,\bar u_j|\le 2K$, where $\tilde T_{[m_j, m_j+i]}$ are the products of transfer matrices
$$
\tilde \Pi_{m_j+i, E_0}=\left(
               \begin{array}{cc}
                 E_0-(V_{\mathrm{bg}}(m_j+i)+ \tilde t_{i,j}) & -1 \\
                 1 & 0 \\
               \end{array}
             \right), \ \ |i|\le N.
$$
Since $\text{supp}\, \nu_1\subseteq \text{supp}\, \nu_2$, this implies that with positive (and, by compactness arguments, uniformly in $j\in \mathbb{N}$ bounded away from zero) probability, a random sequence generated by i.i.d.\ random variables distributed with respect to $\nu_2$ will coincide (up to an error not greater than $\varepsilon_N>0$) with the $(2N+1)$-tuple $\{t^1_{-N, j}, \ldots, t^1_{0, j}, \ldots, t^1_{N, j}\}$ over the interval of indices $[m_j-N, m_j+N]$. Due to the second Borel–Cantelli Lemma, if the sum of probabilities of a sequence of independent events is infinite, with probability one infinitely many of those events happens. Therefore,  another application of Proposition~\ref{p.crit} implies that $\Sigma_1 \subseteq \Sigma_2$.
\end{proof}

\begin{proof}[Proof of Theorem \ref{t.supporttheorem}.]
Since in the ergodic case the almost sure spectrum coincides with the almost sure essential spectrum, Theorem \ref{t.supporttheorem} follows from Theorem \ref{t.supportthm}.
\end{proof}

\section{Generalized Ground States}\label{sec.basicsofGS}

\subsection{The Deterministic Setting}

Let us consider a Schr\"odinger operator
\begin{equation}\label{e.operapp}
[H \psi](n) = \psi(n+1) + \psi(n-1) + V(n) \psi(n)
\end{equation}
in $\ell^2(\Z)$ with a bounded potential $V : \Z \to \R$. Thus, $H$ is a bounded self-adjoint operator and its spectrum $\sigma(H)$ is a compact subset of $\R$. We refer to
\begin{equation}\label{e.gseapp}
E_\mathrm{max} := \max \sigma(H)
\end{equation}
as the \emph{ground state energy}. Traditionally, one considers $-\Delta + V$ and refers to the bottom of the spectrum as the ground state energy. Since it is customary to drop the minus sign when considering discrete Schr\"odinger operators, one then switches focus from the bottom to the top of the spectrum.

Our goal is to discuss the \emph{ground state}. In informal terms, this is ``the smallest'' solution of the difference equation
\begin{equation}\label{e.eveapp}
u(n+1) + u(n-1) + V(n) u(n) = E u(n)
\end{equation}
for $E = E_\mathrm{max}$. In the traditional setting, when considering atomic models $-\Delta+V$ with $V$ having a non-trivial negative part and $|V(x)| \to 0$ as $|x| \to \infty$, the bottom of the spectrum (usually) is a discrete eigenvalue and one is often able to show that it is simple. A normalized associated eigenfunction, which could typically be shown to be strictly positive, was then referred to as the ground state. In our general setting, there may not be a square-summable eigenfunction, and hence we will need a more general concept to identify a ground state.

Let us recall the famous definition of subordinacy due to Gilbert and Pearson \cite{GP87}.

\begin{definition}
A solution $u$ of \eqref{e.eveapp} is called \emph{subordinate at} $+ \infty$ if it does not vanish identically and we have
$$
\lim_{N \to \infty} \frac{\sum_{n=1}^N |u(n)|^2}{\sum_{n=1}^N |\tilde u(n)|^2} = 0
$$
for every solution $\tilde u$ of \eqref{e.eveapp} that is linearly independent from $u$ (i.e., $\tilde u$ is not a multiple of $u$).
\end{definition}

\begin{remark}\label{r.subordinacy}
(a) Subordinacy at $- \infty$ is defined analogously.
\\[1mm]
(b) By the constancy of the Wronskian (or the equivalent fact that the transfer matrices are unimodular), we cannot have two linearly independent square-summable solutions. Thus, every solution that is square-summable at $+ \infty$ is subordinate at $+ \infty$, and hence subordinacy generalizes square-summability.
\\[1mm]
(c) If there is a solution of \eqref{e.eveapp} that is subordinate at $\pm \infty$, then it is unique up to a multiplicative constant. In this sense one can say that it is ``the smallest solution.''
\\[1mm]
(d) We will eventually be interested in dynamically defined potentials, where the associated Schr\"odinger cocycle at energy $E_\mathrm{max}$ is reducible to a constant parabolic matrix. Thus, for each element of the hull, there will then be a unique (up to a constant multiple) solution $u_\mathrm{bdd}^\pm$ that is bounded at $\pm \infty$, while each linearly independent solution grows linearly. It is easy to check that in this scenario, $u_\mathrm{bdd}^\pm$ is subordinate at $\pm \infty$.
\end{remark}

\begin{theorem}\label{t.positivegroundstate}
Suppose that $E = E_\mathrm{max}$ and $u^\pm$ is a solution of \eqref{e.eveapp} that is subordinate at $\pm \infty$. Then, up to a multiplicative constant, we have $u^\pm(n) > 0$ for every $n \in \Z$.
\end{theorem}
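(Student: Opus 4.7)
The plan is to first build a strictly positive solution $u_0$ of $Hu_0 = E_{\mathrm{max}} u_0$ on all of $\Z$ via a Perron--Frobenius limiting argument applied to finite-volume restrictions, and then to compare any subordinate solution $u^+$ to $u_0$ through the discrete Wronskian identity. The comparison will force $u^+$ either to be a scalar multiple of $u_0$ or to be given by an explicit tail-sum formula; both possibilities yield a function of constant sign. The $u^-$ case is parallel.

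For the construction of $u_0$ I would proceed as follows. For each $N \in \N$, the finite-volume restriction $H_{[-N,N]}$ with Dirichlet boundary conditions is a symmetric tridiagonal matrix whose off-diagonal entries equal $1 > 0$; after shifting by a sufficiently large constant it becomes irreducible and non-negative, and the Perron--Frobenius theorem yields a strictly positive eigenvector $\phi_N$ for its largest eigenvalue $E_N$. By the min-max principle, $E_N \uparrow E_{\mathrm{max}}$. Normalizing $\phi_N(0) = 1$, the three-term recurrence together with $\phi_N > 0$ gives the uniform bound $\phi_N(n) \le C^{|n|}$, and a diagonal extraction produces a subsequential pointwise limit $u_0 \ge 0$ satisfying $H u_0 = E_{\mathrm{max}} u_0$ and $u_0(0) = 1$. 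A zero of $u_0$ would propagate via the recursion (from $u_0 \ge 0$ and $u_0(n_0) = 0$ one forces $u_0(n_0 \pm 1) = 0$, hence $u_0 \equiv 0$), contradicting $u_0(0) = 1$; thus $u_0 > 0$ everywhere.

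To conclude, I would split on whether $u_0 \in \ell^2([0,\infty))$. If so, $u_0$ is itself subordinate at $+\infty$ by Remark~\ref{r.subordinacy}(b), and the uniqueness statement in Remark~\ref{r.subordinacy}(c) forces $u^+ \propto u_0$. Otherwise $T(N) := \sum_{n=0}^N u_0(n)^2 \to \infty$; assume for contradiction that $u^+$ is not proportional to $u_0$, and denote by $\kappa := W(u_0, u^+) \ne 0$ the Wronskian constant. The discrete Wronskian identity yields
\[
\frac{u^+(n)}{u_0(n)} = A + \kappa\, S(n), \qquad S(n) := \sum_{k=0}^{n-1} \frac{1}{u_0(k)\, u_0(k+1)},
\]
with $A := u^+(0)/u_0(0)$; since $u_0 > 0$, $S$ is strictly increasing, with limit $S_\infty \in (0,+\infty]$. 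If $S_\infty = \infty$, then $(A + \kappa S(n))^2 \ge (\kappa^2/4)\, S(n)^2$ for all $n$ beyond some $n_1$, and monotonicity of $S$ combined with $T(N) \to \infty$ gives
\[
\frac{\sum_{n=0}^N u^+(n)^2}{\sum_{n=0}^N u_0(n)^2} \ge \frac{\kappa^2 S(n_1)^2}{4} \cdot \frac{T(N) - T(n_1 - 1)}{T(N)};
\]
sending $N \to \infty$ and then $n_1 \to \infty$, the right-hand side diverges, contradicting subordinacy of $u^+$. Hence $S_\infty < \infty$, and $L := A + \kappa S_\infty$ is the limit of $u^+(n)/u_0(n)$; if $L \ne 0$, a Cesaro-type averaging (again using $T(N) \to \infty$) yields $\sum u^+(n)^2/\sum u_0(n)^2 \to L^2 > 0$, again a contradiction. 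Therefore $L = 0$, which forces
\[
u^+(n) = -\kappa\, u_0(n) \sum_{k=n}^\infty \frac{1}{u_0(k)\, u_0(k+1)},
\]
a function of constant sign $-\sgn(\kappa)$ on all of $\Z$.

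I expect the hardest step to be orchestrating the trichotomy of growth regimes for $S$ together with the $\ell^2$-versus-not dichotomy for $u_0$: each branch is a short calculation, but the whole point is that strict positivity of $u_0$ makes $S$ monotone, which then lets the subordinacy hypothesis on $u^+$ eliminate every configuration except the single sign-definite line in the two-dimensional solution space.
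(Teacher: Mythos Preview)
Your argument is correct, modulo two cosmetic points: the phrase ``assume for contradiction'' in the final paragraph is a misnomer, since you do not reach a contradiction but rather derive the explicit tail-sum formula and read off constant sign from it; and you should note at the outset that $u^+$ may be taken real, since otherwise $\overline{u^+}$ would be a second, linearly independent subordinate solution at $+\infty$, contradicting uniqueness.

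The route, however, is genuinely different from the paper's. The paper never constructs a positive comparison solution. It works directly with the ratios $d(n) = u^+(n)/u^+(n-1)$: if some $d(k) \in (-\infty,0]$, it introduces the half-line Weyl--Titchmarsh function $m_k^+$ based at $k$, whose Herglotz integral representation (using $\mathrm{supp}\,\mu_k^+ \subset (-\infty, E_{\max}]$) forces $\lim_{z \downarrow E_{\max}} m_k^+(z) \in [-\infty,0)$, while subordinacy theory identifies this limit with $-d(k) \in [0,\infty)$, a contradiction. Your approach is more elementary and self-contained in that it avoids the link between subordinacy and boundary values of $m$-functions, and it yields as a by-product both the globally positive solution $u_0$ and an explicit formula for $u^+$ in terms of it; this is essentially the construction from Teschl's book sketched in the remark after Proposition~\ref{prop.HLexistenceofSS}, carried all the way through the comparison step. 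The paper's argument is shorter and makes more transparent that the only feature of $E_{\max}$ being used is that it lies at the right edge of the support of every half-line spectral measure.
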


\begin{proof}
We only consider the $+$ case, as the proof for the $-$ case is completely analogous.

Let us consider the ratios
$$
d(n) = \frac{u^+(n)}{u^+(n-1)}.
$$
As $u^+$ is subordinate, $u^+$ does not vanish identically and hence cannot have two consecutive zeros. Thus, while it can happen that $u^+(n-1) = 0$, we then must have $u^+(n) \not= 0$, and we can unambiguously set $d(n) := \infty$ in this case. In all other cases, $d(n)$ is a finite (complex) number.

In fact, we have $d(n) \in \R \cup \{\infty\}$ for every $n \in \Z$. To see this, assume this fails. Then a conjugate solution $\bar u^+$ is linearly independent from $u^+$, solves \eqref{e.eveapp} as well (because $E$ is real), and is subordinate at $+ \infty$ as well (for obvious reasons); a contradiction.

Note next that the claim of the theorem is equivalent to
$$
d(n) \in (0,\infty) \quad \text{for every } n \in \Z.
$$
Moreover, if $d(n) = \infty$ for some $n \in \Z$, then $d(n-1) = 0$. Thus, the failure of the claim of the theorem is equivalent to
\begin{equation}\label{e.failure}
\exists k \in \Z : d(k) \in (-\infty,0].
\end{equation}

Assume that \eqref{e.failure} holds. We consider the restriction $H_k^+$ of $H$ to $\ell^2(\{ k, k+1, k+2, \ldots \})$ with Dirichlet boundary condition. We consider the Weyl-Titchmarsh function
\begin{equation}\label{e.WTfunction}
m_k^+(z) := \langle \delta_k, (H^+_k - z)^{-1} \delta_k \rangle = \int \frac{d\mu_k^+(E)}{E - z},
\end{equation}
where $\mu_k^+$ is the spectral measure corresponding to the pair $(H_k^+,\delta_k)$ and $z \in \C \setminus \sigma(H^+_k)$. It follows from general results (see, e.g., \cite{DF21}) that we can also write
\begin{equation}\label{e.WTfunction2}
m_k^+(z) = - \frac{u^+_z(k)}{u^+_z(k-1)},
\end{equation}
where $u^+_z$ is subordinate at $+\infty$ and solves \eqref{e.eveapp} with $E = z$.

By the min-max principle, we have that $\sigma(H^+_k) \subset (-\infty,E_\mathrm{max}]$ and hence $\mu^+_k$ gives no weight to $(E_\mathrm{max},\infty)$. It therefore follows from \eqref{e.WTfunction} that
\begin{equation}\label{e.WTfunctionconsequence1}
z \in (E_\mathrm{max},\infty) \quad \Rightarrow \quad m_k^+(z) < 0.
\end{equation}
Moreover, it also follows from \eqref{e.WTfunction} that
\begin{equation}\label{e.WTfunctionconsequence2}
z \in (E_\mathrm{max},\infty) \quad \Rightarrow \quad (m_k^+)'(z) = \int \frac{d\mu_k^+(E)}{(E - z)^2} > 0.
\end{equation}
Obviously, \eqref{e.WTfunctionconsequence1} and \eqref{e.WTfunctionconsequence2} show that the following limit exists:
\begin{equation}\label{e.WTfunctionlimit}
m_k^+(E_\mathrm{max}) = \lim_{z \downarrow E_\mathrm{max}} m^+_k(z) \in [-\infty, 0).
\end{equation}
On the other hand, the relation \eqref{e.WTfunction2} then extends to $z = E_\mathrm{max}$ by subordinacy theory (see, e.g., \cite{DF21}), and we obtain
\begin{equation}\label{e.m+kandd}
m_k^+(E_\mathrm{max}) = - d(k) \in [0,\infty).
\end{equation}
Since \eqref{e.WTfunctionlimit} and \eqref{e.m+kandd} are incompatible, it follows that \eqref{e.failure} is impossible, and the proof is finished.
\end{proof}

It is of interest to find sufficient conditions for the assumption of Theorem~\ref{t.positivegroundstate}. Let us discuss the presence of a subordinate solution on the right half line at the top of the spectrum. The case of the left half line is of course similar. In applications we will need the coincidence of the energy, that is, the top of the spectrum will have to be the same for the whole line, the left half line, and the right half line. This will be the case, for example, in the Anderson model.

So, we consider an operator $H$ in $\ell^2(\N)$, $\N = \{ 1, 2, 3, \ldots \}$, acting as \eqref{e.oper} together with a Dirichlet boundary condition at the origin,
\begin{equation}\label{e.dbc}
\psi(0) = 0.
\end{equation}
(Note that in the proof above, this operator would have been denoted by $H_1^+$.)

We have the following result. 

\begin{prop}\label{prop.HLexistenceofSS}
Consider the setting just described and define the ground state energy $E_\mathrm{max}$ as in \eqref{e.gseapp}. Then the difference equation \eqref{e.eveapp} admits a subordinate solution at $\infty$ for $E = E_\mathrm{max}$.
\end{prop}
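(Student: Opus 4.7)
The plan is to produce a subordinate solution at $E_\mathrm{max}$ as a pointwise limit of Weyl solutions at energies $z>E_\mathrm{max}$, exploiting the monotonicity of the Weyl $m$-function at the top of the spectrum and invoking Gilbert-Pearson subordinacy theory at the end.

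First, I observe that for every real $z>E_\mathrm{max}$, $z$ lies in the resolvent set of $H$ (since $\sigma(H)\subseteq(-\infty,E_\mathrm{max}]$), so standard Weyl-Titchmarsh theory guarantees a unique (up to a nonzero scalar) solution $u_z$ of \eqref{e.eveapp} that is $\ell^2$ at $+\infty$. Such a $z$ is not an eigenvalue, so $u_z(0)\ne 0$ and I may normalize $u_z(0)=1$. By Remark~\ref{r.subordinacy}(b), $u_z$ is then automatically subordinate at $+\infty$, and the Weyl $m$-function of $H$ reads $m(z)=-u_z(1)/u_z(0)=-u_z(1)$.

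Second, I follow the argument used for \eqref{e.WTfunctionconsequence1}--\eqref{e.WTfunctionlimit} in the proof of Theorem~\ref{t.positivegroundstate}: the representation $m(z)=\int(E-z)^{-1}\,d\mu(E)$ with $\mathrm{supp}\,\mu\subseteq(-\infty,E_\mathrm{max}]$ shows that $m$ is real, strictly negative, and strictly monotone increasing on $(E_\mathrm{max},\infty)$, so the one-sided limit
\[
M := \lim_{z\downarrow E_\mathrm{max}} m(z) \in [-\infty,0)
\]
exists (strict negativity of $M$ uses that $E_\mathrm{max}\in\mathrm{supp}\,\mu$ because $\delta_1$ is cyclic). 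If $M$ is finite, I take $u^*$ to be the solution of \eqref{e.eveapp} at $E=E_\mathrm{max}$ with $u^*(0)=1$ and $u^*(1)=-M$; if $M=-\infty$, I first rescale so that $u_z(1)=1$, observe $u_z(0)\to 0$, and let $u^*$ be the Dirichlet solution $u^*(0)=0$, $u^*(1)=1$. In either case continuous dependence of the transfer matrices on the energy yields $u_z\to u^*$ pointwise on $\N$.

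Third, to conclude that $u^*$ itself is subordinate at $+\infty$, I would invoke Gilbert-Pearson subordinacy theory (see, e.g., \cite{DF21}): for a half-line discrete Schr\"odinger operator, the existence of the boundary value of the $m$-function at $E$ in $\R\cup\{-\infty\}$ (approached from the resolvent set) is equivalent to the existence of a subordinate solution at $E$, and that subordinate solution is uniquely determined by the initial data read off from this boundary value, precisely as constructed in the preceding paragraph. This last step is the main technical point: pointwise convergence of the subordinate solutions $u_z$ does not by itself deliver subordinacy of the limit, and it is the boundary-value characterization of subordinacy that closes this gap.
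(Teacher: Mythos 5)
Your strategy is essentially the same as the paper's: identify the $m$-function of the half-line operator, use the Herglotz representation with $\mathrm{supp}\,\mu\subseteq(-\infty,E_\mathrm{max}]$ to establish negativity and monotonicity of $m$ on $(E_\mathrm{max},\infty)$, extract a boundary value $M\in[-\infty,0)$, and feed this into subordinacy theory. Your construction of the candidate subordinate solution $u^*$ from $M$ and your observation that pointwise convergence $u_z\to u^*$ alone is insufficient are both sound.

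The gap is in the final step, and it is precisely the point the paper's proof is careful about. What you establish is the limit of $m(z)$ as $z\downarrow E_\mathrm{max}$ \emph{along the real axis}. Gilbert--Pearson / Jitomirskaya--Last subordinacy theory, however, characterizes the existence of a subordinate solution at $E$ via the limit of $m(E+i\varepsilon)$ as $\varepsilon\downarrow 0$, i.e.\ the \emph{vertical} boundary value from the upper half-plane. Your parenthetical ``approached from the resolvent set'' elides this distinction: if you mean approach from the full resolvent set (which includes $\C_+$), you have only verified the limit along a one-dimensional slice of it; if you mean approach along $(E_\mathrm{max},\infty)\subset\R$, that is not the hypothesis the cited theorem requires. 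The paper closes exactly this gap with display \eqref{e.WTfunctionlimitHL2}, proving
\[
\lim_{\varepsilon\downarrow 0} m(E_\mathrm{max}+i\varepsilon)=\lim_{\varepsilon\downarrow 0} m(E_\mathrm{max}+\varepsilon),
\]
by splitting into the cases $M=-\infty$ and $M\in(-\infty,0)$ and applying monotone convergence in the first case and dominated convergence in the second (both using that the measure $\mu$ lives entirely to the left of $E_\mathrm{max}$). Once you insert this step, your argument matches the paper's proof and the appeal to subordinacy theory is justified. Without it, you have not verified the hypothesis of the theorem you invoke.
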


\begin{proof}
%
Denote the spectral measure of $H$ and $\delta_1$ by $\mu$. Thus, the associated Weyl-Titchmarsh function $m : \C_+ \to \C_+$ is given by
\begin{equation}\label{e.WTfunctionHL}
m(z) := \langle \delta_1, (H - z)^{-1} \delta_1 \rangle = \int \frac{d\mu_(E)}{E - z}.
\end{equation}
Obviously, the right-hand side makes sense for every $z \not\in \mathrm{supp} \, \mu = \sigma(H)$, and we will make use of this fact.

It follows from general results (see, e.g., \cite{DF21}) that we can also write
\begin{equation}\label{e.WTfunction2HL}
m(z) = - \frac{u^+_z(1)}{u^+_z(0)},
\end{equation}
where $u^+_z$ is subordinate at $+\infty$ and solves \eqref{e.eveapp} with $E = z$. The existence of such a solution for every $z \not\in \mathrm{supp} \, \mu = \sigma(H)$ is clear (simply apply $(H-z)^{-1}$ to $\delta_1$, obtain a solution away from one and modify around one to turn it into a genuine solution).

By the same reasoning as in the previous proof we again have the negativity statement
\begin{equation}\label{e.WTfunctionconsequence1HL}
z \in (E_\mathrm{max},\infty) \quad \Rightarrow \quad m(z) < 0
\end{equation}
and the monotonicity statement
\begin{equation}\label{e.WTfunctionconsequence2HL}
z \in (E_\mathrm{max},\infty) \quad \Rightarrow \quad m'(z) = \int \frac{d\mu(E)}{(E - z)^2} > 0,
\end{equation}
which combined imply the limiting statement
\begin{equation}\label{e.WTfunctionlimitHL}
m(E_\mathrm{max}) := \lim_{\varepsilon \downarrow 0} m(E_\mathrm{max} + \varepsilon) \in [-\infty, 0).
\end{equation}

Observe that we have
\begin{equation}\label{e.WTfunctionlimitHL2}
\lim_{\varepsilon \downarrow 0} m(E_\mathrm{max} + i \varepsilon) = \lim_{\varepsilon \downarrow 0} m(E_\mathrm{max} + \varepsilon),
\end{equation}
that is, we claim that the limit on the left-hand side exists and equals the limit on the right-hand side. To see this, one can separate the two cases $m(E_\mathrm{max}) = - \infty$ and $m(E_\mathrm{max}) \in (-\infty,0)$ and use monotone convergence in the first case and dominated convergence in the second case to verify existence of the limit on the left-hand side, as well as equality with $m(E_\mathrm{max})$.

Thus, \eqref{e.WTfunctionlimitHL} and \eqref{e.WTfunctionlimitHL2} yield
$$
\lim_{\varepsilon \downarrow 0} m(E_\mathrm{max} + i \varepsilon) = m(E_\mathrm{max}) \in [-\infty, 0),
$$
which in turn implies the desired statement via subordinacy theory; compare \cite{DF21, JL99}.
\end{proof}

\begin{remark}
An alternative approach is presented in \cite[Section~2.3]{T00}. The idea is to first show that for $E \ge E_\mathrm{max}$, every solution of \eqref{e.eveapp} can change sign at most once, and to then consider the Dirichlet solution, which has a zero (and hence a sign change) at $n_0$, normalize it, and send $n_0$ to infinity. This recovers a positive solution $u_+(\cdot,E)$, which can be shown to be minimal among all positive solutions in a suitable sense (namely, it has values $1$ and $\phi_+(E)$ at the points $0$ and $1$, and a solution $u(\cdot,E)$ with values $1$ and $\phi(E)$ at the points $0$ and $1$ will be positive if and only if $\phi(E) \ge \phi_+(E)$). Moreover, it turns out to be strongly (i.e., no need for an average) subordinate:
$$
\lim_{n \to \infty} \frac{u_+(n,E)}{u(n,E)} = 0.
$$
A similar treatment can be performed near $-\infty$, and hence one obtains positive solutions $u_\pm(\cdot, E)$ that may or may not be linearly dependent, but which are minimal on their respective half line. This leads to the two cases where $u_\pm$ are linearly dependent (called \emph{critical}) or not (called \emph{subcritical}).

Conversely, it can be shown that the presence of a positive solution at energy $E$ implies that $\sigma(H) \subset (-\infty,E]$, so that combining the two statements, one finds that $\sigma(H) \subset (-\infty,E]$ if and only if there is a positive solution at $E$.
\end{remark}

\subsection{The Dynamically Defined Setting}

Let us take Remark~\ref{r.subordinacy}.(d) further and deduce some consequences of Theorem~\ref{t.positivegroundstate} in the case of dynamically defined potentials. Before stating them, let us describe the framework. Suppose $(\Omega,T)$ is a topological dynamical system given by a homeomorphism $T : \Omega \to \Omega$ of a compact metric space. Fix an ergodic Borel probability measure $\mu$ and a continuous sampling function $f : \Omega \to \R$. We obtain a family of potentials $\{ V_\omega \}_{\omega \in \Omega}$ given by
$$
V_\omega(n) = f(T^n \omega), \quad \omega \in \Omega, \; n \in \Z,
$$
and a family of Schr\"odinger operators $\{ H_\omega \}_{\omega \in \Omega}$ in $\ell^2(\Z)$, acting via
$$
[H_\omega \psi](n) = \psi(n+1) + \psi(n-1) + V_\omega(n) \psi(n), \quad \omega \in \Omega, \; n \in \Z.
$$
It is a fundamental result (see, e.g., \cite{DF21}) that there exists a compact set $\Sigma \subset \R$ such that $\sigma(H_\omega) = \Sigma$ for $\mu$-almost every $\omega \in \Omega$. Moreover, in case $T$ is minimal (i.e., all of its orbits are dense), then we even have $\sigma(H_\omega) = \Sigma$ for every $\omega \in \Omega$ (and the choice of the ergodic measure plays no role). Let us set
$$
E_\mathrm{max} := \max \Sigma.
$$

The difference equation associated with $H_\omega$,
\begin{equation}\label{e.ddeve}
u(n+1) + u(n-1) + V_\omega(n) u(n) = E u(n),
\end{equation}
can be recast in matrix-vector form as
$$
\begin{pmatrix} u(n+1) \\ u(n) \end{pmatrix} = \begin{pmatrix} E - V_\omega(n) & - 1 \\ 1 & 0 \end{pmatrix} \begin{pmatrix} u(n) \\ u(n-1) \end{pmatrix}.
$$
Iterating this, one sees that the resulting matrix product is generated by considering the second component of the iterates of the following skew-product
$$
(T,A_E) : \Omega \times \R^2 \to \Omega \times \R^2, \quad (\omega, v) \mapsto (T \omega, A_E(\omega) v),
$$
where
$$
A_E : \Omega \to \mathrm{SL}(2,\R), \quad \omega \mapsto \begin{pmatrix} E - f(\omega) & -1 \\ 1 & 0 \end{pmatrix}.
$$
For $n \in \Z$ we define $A^n_E : \Omega \to \mathrm{SL}(2,\R)$ by $(T,A_E)^n = (T^n,A^n_E)$ and then note that this is precisely the matrix product that sends $(u(0), u(-1))^t$ to $(u(n), u(n-1))^t$ for solutions of \eqref{e.ddeve}.

\begin{coro}\label{c.dyndefsetting}
Suppose there are $c \in \R$ and a continuous map $W: \Omega \to \mathrm{SL}(2,\R)$ such that
\begin{equation}\label{e.conjugacy}
W(T \omega)^{-1} A_{E_\mathrm{max}}(\omega) W(\omega) = A_* := \begin{pmatrix} 1 & c \\ 0 & 1 \end{pmatrix}.
\end{equation}
Then $c\ne 0$, and there are a compact cone $C$ in the open first quadrant of $\R^2$ and a continuous section
$b : \Omega \to C \setminus \{ 0 \}$ so that
\begin{itemize}

\item[{\rm (i)}] $b$ is projectively invariant under the dynamics: $[b(T\omega)] = [A_{E_\mathrm{max}}(\omega) b(\omega)]$, where $[ \cdot ] : \R^2 \setminus \{0\} \to \R \mathbb{P}^1$ denotes the canonical projection,

\item[{\rm (ii)}] for every $\omega \in \Omega$, the sequence $(A^n_{E_\mathrm{max}}(\omega) b(\omega))_{n \in \Z}$ is bounded.

\end{itemize}
\end{coro}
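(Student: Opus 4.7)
My plan is to build the section $b$ from the (essentially unique) bounded direction of the model cocycle $A_*$. When $c \neq 0$, the vector $e_1 = (1, 0)^T$ is the unique (up to scalar) $A_*$-invariant direction, and in fact $\{ A_*^n v : n \in \Z \}$ is bounded only for $v \in \mathrm{span}(e_1)$. The natural candidate is therefore $\tilde b(\omega) := W(\omega) e_1$, which is continuous because $W$ is. Invariance follows directly from the conjugacy,
\begin{equation*}
A_{E_\mathrm{max}}(\omega) \tilde b(\omega) = W(T\omega) A_* W(\omega)^{-1} W(\omega) e_1 = W(T\omega) e_1 = \tilde b(T\omega),
\end{equation*}
which iterates to $A^n_{E_\mathrm{max}}(\omega) \tilde b(\omega) = \tilde b(T^n \omega)$, so uniform boundedness of the orbit is automatic from continuity of $\tilde b$ on the compact space $\Omega$. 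The positivity property of $b$, and the nonvanishing of $c$, will both be extracted from the existence of a strictly positive subordinate solution at $E_\mathrm{max}$, combining Proposition~\ref{prop.HLexistenceofSS}, Theorem~\ref{t.positivegroundstate}, and Remark~\ref{r.subordinacy}(d).

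For $c \neq 0$, I would argue by contradiction. If $c = 0$, the relation above reduces to $A_{E_\mathrm{max}}(\omega) = W(T\omega) W(\omega)^{-1}$, so $A^n_{E_\mathrm{max}}(\omega) = W(T^n \omega) W(\omega)^{-1}$, and compactness of $\Omega$ together with continuity of $W$ and $W^{-1}$ produces uniform two-sided bounds on $\|A^n_{E_\mathrm{max}}(\omega) v\|$ for every unit $v$. Consequently every non-trivial solution $u$ of the difference equation at $E = E_\mathrm{max}$ would satisfy $|u(n)|^2 + |u(n-1)|^2 \asymp 1$ uniformly in $n$, so $\sum_{k=1}^N |u(k)|^2 \asymp N$ for every such $u$, and no solution could be subordinate at $+\infty$ in the Gilbert--Pearson sense. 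This contradicts Proposition~\ref{prop.HLexistenceofSS} applied to the half-line restriction $H_{\omega,+}$ at an $\omega$ whose half-line top equals $E_\mathrm{max}$ --- such $\omega$ are available in the minimal/ergodic framework in which the Corollary is invoked.

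Now with $c \neq 0$, fix $\omega_0$ admitting a strictly positive subordinate solution $u^+$ at $+\infty$ for energy $E_\mathrm{max}$ (combining Proposition~\ref{prop.HLexistenceofSS} with Theorem~\ref{t.positivegroundstate}). By Remark~\ref{r.subordinacy}(d), the unique bounded solution of the reduced equation is the subordinate one, so $(u^+(0), u^+(-1))^T$ is a nonzero scalar multiple of $\tilde b(\omega_0)$; I would then set $b := \pm \tilde b$ with the sign chosen so that $b(\omega_0)$ has strictly positive entries. It follows that $b(T^n \omega_0) = A^n_{E_\mathrm{max}}(\omega_0) b(\omega_0)$ is a positive multiple of $(u^+(n), u^+(n-1))^T$ for every $n \in \Z$, hence lies in the open first quadrant by positivity of $u^+$. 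Minimality of $T$ makes $\{T^n \omega_0\}_{n \in \Z}$ dense in $\Omega$, so by continuity $b(\omega)$ must lie in the closed first quadrant for every $\omega \in \Omega$. To rule out the axes, suppose $b(\omega^*) = (x^*, 0)^T$ with $x^* > 0$; a direct computation using $A_{E_\mathrm{max}}(\theta)^{-1} (x, 0)^T = (0, -x)^T$ gives $b(T^{-1} \omega^*) = (0, -x^*)^T$, which lies outside the closed first quadrant --- contradiction. The symmetric case $b(\omega^*) = (0, y^*)^T$ is excluded by the forward computation $A_{E_\mathrm{max}}(\omega^*) (0, y^*)^T = (-y^*, 0)^T$. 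Compactness of $\Omega$ combined with continuity of $b$ then confines $b(\Omega)$ to a compact cone $C$ contained in the open first quadrant, completing the construction. I anticipate that the hardest step is securing the contradiction in the second paragraph, since it requires identifying an $\omega$ for which the half-line operator attains the full-line top energy $E_\mathrm{max}$; this is a delicate spectral-theoretic input from the ergodic/minimal setting that needs to be handled carefully.
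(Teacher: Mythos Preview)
Your proof is correct and follows essentially the same route as the paper: define $\tilde b(\omega) = W(\omega)e_1$, read off invariance and boundedness from the conjugacy, and rule out $c=0$ via the existence of a subordinate solution at the half-line top (the paper supplies the needed input $\max\sigma(H_\omega^\pm)=\max\sigma(H_\omega)$ almost surely, which is exactly the ``delicate spectral-theoretic input'' you flagged). The only genuine difference is in the positivity step: the paper observes that for \emph{every} $\omega$ the solution generated by $\tilde b(\omega)$ is bounded while all linearly independent solutions grow linearly (from $A_*^n$ with $c\neq 0$), hence is subordinate at both $\pm\infty$, and then applies Theorem~\ref{t.positivegroundstate} pointwise; you instead fix one $\omega_0$, propagate positivity along its dense orbit, pass to the closed quadrant by continuity, and exclude the axes via the explicit Schr\"odinger-matrix computation. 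Both arguments are sound; the paper's is slightly more direct (no density/minimality is used in the positivity step itself), while yours makes the compact-cone conclusion more explicit.
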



\begin{proof}
Recall that the discussion preceding Proposition~\ref{prop.HLexistenceofSS} points out the relevance of the coincidence of the top of the spectrum for the line and half line operators. Let us explain that this will always hold in the dynamically defined situation.

We use the natural notation $H_\omega, H^+_\omega, H^-_\omega$ for the operators in question. First of all, it is well known and not hard to see (cf., e.g., \cite{DF21}) that the essential spectra coincide almost surely, that is,
\begin{equation}\label{e.coincidenceofspectrumtops1}
\sigma_\mathrm{ess}(H_\omega) = \sigma_\mathrm{ess}(H^+_\omega) = \sigma_\mathrm{ess}(H^-_\omega) \quad \text{for $\mu$-almost every } \omega \in \Omega.
\end{equation}
Secondly, the spectrum of the whole-line operator is purely essential, that is,
\begin{equation}\label{e.coincidenceofspectrumtops2}
\sigma(H_\omega) = \sigma_\mathrm{ess}(H_\omega) \quad \text{for $\mu$-almost every } \omega \in \Omega.
\end{equation}
Finally, the min-max theorem implies as before that
\begin{equation}\label{e.coincidenceofspectrumtops3}
\max \sigma(H^\pm_\omega) \le \max \sigma(H_\omega) \quad \text{for every } \omega \in \Omega.
\end{equation}
Combining \eqref{e.coincidenceofspectrumtops1}--\eqref{e.coincidenceofspectrumtops3}, we find that
\begin{equation}\label{e.coincidenceofspectrumtops4}
\max \sigma(H^\pm_\omega) = \max \sigma(H_\omega) \quad \text{for $\mu$-almost every } \omega \in \Omega.
\end{equation}
In cases where the spectrum is $\omega$-independent, this identity then trivially extends to all $\omega$'s.

It follows from Proposition~\ref{prop.HLexistenceofSS} and (\ref{e.coincidenceofspectrumtops4}) 
that whenever we have a conjugacy of the form \eqref{e.conjugacy} with a continuous map $W : \Omega \to \mathrm{SL}(2,\R)$ and \emph{some} $c \in \R$, then 
we must have $c \not= 0$.

Now let us set
$$
\tilde b(\omega) := W(\omega) \begin{pmatrix} 1 \\ 0 \end{pmatrix} = \begin{pmatrix} W_{11}(\omega) \\ W_{21}(\omega) \end{pmatrix}.
$$
The conjugacy \eqref{e.conjugacy} shows that $\tilde b$ has property (i). Since
$$
A_*^n \begin{pmatrix} 1 \\ 0\end{pmatrix}
$$
is bounded as $n$ ranges over $\Z$, the conjugacy \eqref{e.conjugacy} also shows that
$$
A^n_{E_\mathrm{max}}(\omega) \tilde b(\omega)
$$
is bounded as $n$ ranges over $\Z$. Thus, $\tilde b$ has property (ii) as well. Moreover, it takes values in $\R^2 \setminus \{ 0 \}$. It remains to show that it can be modified so that (the two properties are preserved and) it takes values in a compact cone in the open first quadrant.

As discussed above, $(A^n_{E_\mathrm{max}}(\omega) \tilde b(\omega))_{n \in \Z}$ corresponds to a solution $u_\omega$ of \eqref{e.ddeve} with $E = E_\mathrm{max}$, which must then also be bounded. Similarly, since $(A_*^n v)_{n \in \Z}$ is linearly growing in both directions (i.e., for $n \to \infty$ and for $n \to - \infty$) for any $v$ that is linearly independent from $(1,0)^t$, we see that all solutions of this difference equation that are linearly independent from $u_\omega$ must be linearly growing in both directions.

It follows that the solution $u_{\omega}$ is subordinate at both $+ \infty$ and $- \infty$. By Theorem~\ref{t.positivegroundstate} it is therefore strictly positive up to a multiplicative constant. This means that for a suitable $\mathrm{const} \not= 0$, $b := \mathrm{const} \cdot \tilde b$ takes values in the open first quadrant. By compactness of $\Omega$ and continuity of $b$, we can find a compact cone $C$ in the open first quadrant such that $b : \Omega \to C \setminus \{ 0 \}$, completing the proof.
\end{proof}

\end{appendix}

\end{document}